\newtheorem{theorem}{Theorem}[section]
\newtheorem{corollary}[theorem]{Corollary}
\newtheorem{lemma}[theorem]{Lemma}
\newtheorem{proposition}[theorem]{Proposition}
\theoremstyle{definition}
\newtheorem{definition}[theorem]{Definition}
\theoremstyle{remark}
\theoremstyle{remark}
\newtheorem{remark}[theorem]{Remark}
\theoremstyle{remark}
\newcommand{\R}{\mathbb{R}}
\newcommand{\N}{\mathbb{N}}
\newcommand{\Z}{\mathbb{Z}}  
\newcommand{\F}{\mathcal{F}}
\newcommand{\E}{\mathcal{E}}
\newcommand{\C}{\mathcal{C}}
\newcommand{\Edd}[1]{\E^{\delta_#1}_{\delta_#1}}
\renewcommand{\L}{\mathcal{L}}
\newcommand{\Lk}[1][]{L_{#1}(\scalebox{.8}{$\vec{k}$})}
\newcommand{\Lzero}[1][]{L_{#1}(\scalebox{.8}{$\vec{0}$})}
\newcommand{\Linf}[1][]{L_{#1}\left(\vec{\infty}\right)}
\renewcommand{\sl}{s\ell}
\newcommand{\K}{\mathcal{K}}
\newcommand{\KLk}[1][]{K_L^{#1}(\scalebox{.8}{$\vec{k}$})}
\newcommand{\KLzero}[1][]{K_L^{#1}(\scalebox{.8}{$\vec{0}$})}
\newcommand{\KLinf}[1][]{K_L^{#1}\left(\vec{\infty}\right)}
\newcommand{\cone}{\mathrm{Cone}}
\newcommand{\h}{\mathrm{h}}
\renewcommand{\th}{\mathrm{th}}
\newcommand{\q}{\mathrm{q}}
\newcommand{\hocolim}{\mathrm{hocolim}}
\newcommand{\longhookrightarrow}{\lhook\joinrel\longrightarrow}
\newcommand{\hspRarrow}[1]{\hspace{#1}\Longrightarrow\hspace{#1}}
\newcommand{\sk}{\mathcal{S}}
\newcommand{\mConeFull}[6]{
\underset{#1,#2 \in #3}{\mathrm{Multicone}} \left( \KC^* \left( #4 \right) \xrightarrow{ #5^* } \h^{ \h_{#3}(#2) - \h_{#3}(#1) - 1} \KC^* \left( #6 \right) \right)
}
\newcommand{\mCone}[6]{
\underset{\h_{#3}(#2) - \h_{#3}(#1) = 1}{\mathrm{Multicone}} \left( \KC^* \left( #4 \right) \xrightarrow{ #5^* } \KC^* \left( #6 \right) \right)
}
\newcommand{\mConehqs}[1]{
\def\tempa{#1}
\mConehqsCont
}
\newcommand{\mConehqsCont}[9]{
\underset{\h_{#2}(#1) - \h_{#2}(\tempa) = 1}{\mathrm{Multicone}} \left( \KChqs{#6}{#7}^* \left( #3 \right) \xrightarrow{ #4^* }
 \KChqs{#8}{#9}^* \left( #5 \right) \right)
}
\newcommand{\mConeFullhqs}[1]{
\def\tempa{#1}
\mConeFullhqsCont
}
\newcommand{\mConeFullhqsCont}[9]{
\underset{\tempa,#1 \in #2}{\mathrm{Multicone}} \left( \KChqs{#6}{#7}^* \left( #3 \right) \xrightarrow{ #4^* } \h^{ \h_{#2}(#1) - \h_{#2}(\tempa) - 1} \KChqs{#8}{#9}^* \left( #5 \right) \right)
}
\newcommand{\mConeFullNOKChqs}[1]{
\def\tempa{#1}
\mConeFullhqsNOKCCont
}
\newcommand{\mConeFullhqsNOKCCont}[9]{
\underset{\tempa,#1 \in #2}{\mathrm{Multicone}} \left( \h^{#6}\q^{#7} \left( #3 \right) \xrightarrow{ #4^* } \h^{ \h_{#2}(#1) - \h_{#2}(\tempa) - 1} \h^{#8}\q^{#9} \left( #5 \right) \right)
}
\newcommand{\topp}[1]{#1^{\mathrm{top}}}
\newcommand{\bott}[1]{#1_{\mathrm{bot}}}
\newcommand{\SSi}{(S^0\times S^2)_i}
\newcommand{\KC}{\widetilde{KC}}
\newcommand{\KChq}[2]{\h^{#1}\q^{#2}KC}
\newcommand{\Khhq}[2]{\h^{#1}\q^{#2}Kh}
\newcommand{\KChqs}[2]{\h^{#1}\q^{#2}\KC}
\newcommand{\KCsimp}{C^*}
\tikzset{->-/.style={decoration={
  markings,
  mark=at position #1 with {\arrow{>}}},postaction={decorate}}}
\tikzset{-<-/.style={decoration={
  markings,
  mark=at position #1 with {\arrow{<}}},postaction={decorate}}}
\def\centerarc[#1](#2)(#3:#4:#5)
\title{Khovanov homology for links in $\#^r(S^2\times S^1)$}
\author{Michael Willis \\
Department of Mathematics, UCLA\\
\href{mailto:msw188@ucla.edu}{\texttt{msw188@ucla.edu}}}
\begin{document}

\maketitle
{\let\thefootnote\relax\footnote{This work was funded in part by the NSF grant DMS-1563615}}

\begin{abstract}
We revisit Rozansky's construction of Khovanov homology for links in $S^2\times S^1$, extending it to define Khovanov homology $Kh(L)$ for links $L$ in $M^r=\#^r(S^2\times S^1)$ for any $r$.  The graded Euler characteristic of $Kh(L)$ can be used to recover WRT invariants at certain roots of unity, and also recovers the evaluation of $L$ in the skein module $\sk(M^r)$ of Hoste and Przytycki when $L$ is null-homologous in $M^r$.  The construction also allows for a clear path towards defining a Lee's homology $Kh'(L)$ and associated $s$-invariant for such $L$, which we will explore in an upcoming paper.  We also give an equivalent construction for the Khovanov homology of the knotification of a link in $S^3$ and show directly that this is invariant under handle-slides, in the hope of lifting this version to give a stable homotopy type for such knotifications in a future paper.
\end{abstract}

\section{Introduction}
In \cite{Khov} Mikhail Khovanov introduced the Khovanov homology $Kh(L)$ of any link $L$ in $S^3$, which categorifies the Jones polynomial of $L$.  This construction was generalized for tangles in the 3-ball in two different, but equivalent, ways.  In \cite{Khov2}, Khovanov considered tangles with $2n$ `incoming' and $2m$ `outgoing' strands.  In the spirit of topological quantum field theories (TQFTs), Khovanov defined corresponding `arc algebras' $H_n$ and $H_m$ and assigned to any such tangle a complex of $H_n,H_m$-bimodules with proper gluing properties that could be used to recover $Kh(L)$.  Meanwhile, in \cite{BN}, Dror Bar-Natan constructed a universal categorification of the Temperley-Lieb algebra allowing him to assign to a tangle a formal complex of cobordisms between Temperley-Lieb diagrams in the 2-disk, again from which $Kh(L)$ could be recovered.

Later in \cite{Roz}, Lev Rozansky used Khovanov's framework to assign complexes of $H_n,H_n$-bimodules to tangles in $S^2\times[0,1]$ with $2n$ endpoints at both ends.  From this Rozansky was able to define the Khovanov homology of the closure of such a tangle in $S^2\times S^1$ by passing to the derived category of such complexes and taking the Hochschild homology, as expected under the axioms of a TQFT.  He then showed that the projective resolution needed for this computation could be approximated by concatenating large numbers of full twists to the given tangle and computing the Khovanov homology of the usual closure in $S^3$.

The main goal of this manuscript is to revisit the argument of Rozansky and extend it to define Khovanov homology invariants for links in $M^r:=\#^r(S^2\times S^1)$, the connect sum of $r$ copies of $S^2\times S^1$.

\begin{theorem}\label{thm:Main Thm}
Let $\L$ be a link in $M^r$ having even geometric intersection numbers $n_i$ with all of the belt spheres $S^2_i\subset M^r$, represented by a diagram $L$.  Then there exists a $\Z$-graded chain complex $KC^j(L)$ whose (graded) homology groups $H^{*,j}(\L):=H^*(KC^j(L))$ are invariants of the link $\L$ up to overall grading shifts which vanish if $\L$ is null-homologous in $M^r$.  For links $\L$ in $M^0=S^3$, these homology groups are precisely the traditional Khovanov homology groups of $\L$.
\end{theorem}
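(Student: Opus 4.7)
\medskip

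\noindent\textbf{Proof proposal.} The plan is to mimic Rozansky's strategy from \cite{Roz} but to carry it out at all $r$ handles simultaneously. First, I would fix a convenient way of presenting $\L\subset M^r$: draw $M^r$ as a $0$-handle with $r$ pairs of $3$-balls removed (or equivalently, as $S^3$ with $r$ pairs of distinguished disks to be identified), so that a diagram $L$ for $\L$ is a diagram in the plane equipped with $r$ distinguished pairs of windows $W_i^\pm$, together with a matching of the $n_i$ strand endpoints at $W_i^+$ with those at $W_i^-$. The evenness assumption guarantees that each window has an even number of endpoints, so that a full twist on $n_i$ strands makes sense at each handle.

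Next, for each $\vec{k}=(k_1,\ldots,k_r)\in\N^r$ I would form the link diagram $\Lk$ in $S^3$ obtained from $L$ by closing up the strands through $W_i^\pm$ (via the matching) and inserting $k_i$ full twists inside each pair of windows. Define $KC^*(\Lk)$ to be the usual Khovanov complex (say in Bar-Natan's formal cobordism category \cite{BN}, so that grading shifts can be absorbed cleanly). The key analytic input is Rozansky's categorified stabilization theorem: after an appropriate shift in homological and quantum degree depending on $\vec{k}$, the complexes $KC^*(\Lk)$ converge, as each $k_i\to\infty$, to a well-defined limit complex $KC^*(L)$ whose homology in each fixed bidegree stabilizes. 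Iterating Rozansky's single-handle argument over the $r$ handles (noting that the stabilizing map at handle $i$ commutes with that at handle $j$ because full twists at different handles involve disjoint sets of strands) is what produces $KC^*(L)$. This should be done carefully enough that the stable degrees match up with the Hochschild-homology perspective.

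The remaining work is to check that $KC^*(L)$ is a link invariant of $\L$, up to an overall grading shift. Reidemeister moves performed on $L$ away from the windows $W_i^\pm$ induce chain homotopy equivalences on each $KC^*(\Lk)$ in a manner compatible with the stabilizing maps, hence descend to the limit. Moves that pass strands through a window are handled by the matching datum plus handle-slide invariance. Therefore the serious content is verifying:
\begin{enumerate}
\item[(a)] invariance under changing the matching of endpoints at $W_i^\pm$ by the $\Z$-action generated by cyclic shift (which is exactly what inserting a single full twist achieves, and hence is trivial in the $k_i\to\infty$ limit, up to a homological/quantum shift);
\item[(b)] invariance under handle-slides, i.e.\ isotopies that push a strand over a belt sphere $S^2_j$.
\end{enumerate}
Both (a) and (b) produce shifts in $(h,q)$-bidegree depending on linking data, and these shifts vanish exactly when $\L$ is null-homologous in $M^r$, explaining the statement about grading ambiguity.

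The main obstacle I expect is (b), the handle-slide invariance. Categorifying this move requires comparing two large twist insertions performed on different sets of strands and identifying them in the stable limit; one will need to construct explicit chain equivalences between the stabilized complexes for $L$ and its handle-slid version $L'$, probably by interpolating through a sequence of diagrams and exhibiting the necessary Bar-Natan cobordisms between twisted tangles. The $r=1$ case of handle-slides is already implicit in Rozansky's work, but the case $r\geq 2$ involves slides across distinct handles and so requires the multi-handle stabilization machinery set up above. Finally, the last sentence of the theorem ($\L\subset S^3$ case) is automatic from the construction, since when $r=0$ no twist insertions occur and $KC^*(L)$ is literally the Khovanov complex of $L$.
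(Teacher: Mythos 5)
Your high-level plan is in the right spirit — form $L(\vec{k})$ by inserting $k_i$ full twists at each handle, stabilize as $k_i\to\infty$ in Bar-Natan's category, and verify invariance move-by-move with grading shifts controlled by homological data — and this is indeed the architecture of the paper's proof. However, several points where you gesture vaguely are exactly where the paper's real work lies, and one piece of your outline is off-target.

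First, the paper deliberately \emph{avoids} the Hochschild-homology / derived-category perspective of Rozansky; the whole construction stays in Bar-Natan's cobordism category so that it generalizes from $S^2\times S^1$ to $M^r$, so "matching up with the Hochschild perspective" is not part of the argument. Second, the decisive property of the limit complex $C^*(\F_n^\infty)$ is not merely that it exists, but that every diagram in it has \emph{through-degree zero} (Theorem \ref{thm:inf twist complex}); this is what makes "pulling strands through the twisted region" possible at all, since turnbacks provide the gaps a strand can slide through. Third, to actually produce chain homotopy equivalences between the relevant multicones one needs the machinery of very strong deformation retracts (Definition \ref{def:VSDR}, Proposition \ref{prop:multicone VSDR equiv}), and Corollary \ref{cor:Consistent Cone of Concats} which requires a consistent choice of signs on the Reidemeister maps $\rho_i^*$. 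This sign problem is where Rozansky's argument (his ``quasi-triviality'') is incomplete for the relevant cobordisms, and the paper replaces it with a direct chain-level calculation (Lemma \ref{lem:circle wrap is identity}) showing that the ``wrap around a circle'' cobordism $\rho_\circ$ induces the identity map — this is the real engine behind Lemma \ref{lem:wrap cone preserved} and hence Theorem \ref{thm:wrapping move shift}. Your proposal does not identify this obstruction, nor the workaround.

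Finally, your item (b) mislabels the hard move: what you describe (a strand passing around the belt sphere, i.e.\ wrapping around a surgery line) is the \emph{surgery-wrap move}, not a handle-slide. Handle-slides are moves on the Kirby diagram presenting $M^r$ itself, are not needed for Theorem \ref{thm:Main Thm} (which fixes the presentation of $M^r$), and are treated only later in Section \ref{sec:knotification Kh} for knotifications. Your item (a) is also too narrow: the moves needed when intersection points shift on the attaching spheres are the \emph{mirror moves} — inserting $\beta$ near one sphere and $\beta^{-1}$ near the other for arbitrary braid $\beta$, not just a cyclic shift — and these are handled by the centrality of $\F_n$ in the braid group, which lets the two copies of $\beta$ slide together and cancel, with no grading shift. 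The precise list of moves that generate isotopy is Proposition \ref{prop:isotopic links}: Reidemeister, mirror, finger, point-pass, and surgery-wrap moves. The shifts vanish exactly when all algebraic intersection numbers $\eta_i$ vanish, i.e.\ when $\L$ is null-homologous.
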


Theorem \ref{thm:Main Thm} provides link invariants that categorify both the skein module $\sk(M^r)$ of Hoste and Przytycki \cite{HP} and the WRT invariants of links in $M^r$ in the proper sense (see Section \ref{sec:decat} for precise statements).  To prove Thoerem \ref{thm:Main Thm}, we revise Rozansky's original construction and avoid the use of Khovanov's derived category and Hochschild homology, for which such an extension might be unclear.  Instead, we choose to remain in Bar-Natan's setting throughout our construction and utilize Rozansky's arguments involving infinite full twists, only applying Khovanov's functor at the end to produce homology groups.  We present a rough outline of this construction below.

Consider $M^r$ as built by performing $r$ $S^0$-surgeries on $S^3$.  We draw $M^r$ on the plane by specifying the projections of the surgery spheres $S^0\times S^2$ for the 0-surgeries, using dashed lines to match corresponding spheres.  Then any link $\L\subset M^r$ can be projected onto this plane as a tangle diagram with $n_i$ matching endpoints on the attaching spheres $(S^0\times S^2)_i$ (see Figure \ref{fig:L in Mr} for an example; these notions will be made precise in Section \ref{sec:Defining general Kh}).

To construct the link invariant, we imagine connecting the corresponding endpoints of the link using $n_i$ parallel copies of the dashed lines, giving us a diagram for a link in $S^3$.  We further augment this diagram by inserting a large number of full right-handed twists $\F_{n_i}^{k_i}$ into each of these connections (see Figure \ref{fig:L in Mr}).  Rozansky's arguments in \cite{Roz} give a prescription for defining a limiting chain complex (in Bar-Natan's universal Temperley-Lieb category) for this diagram as the quantities $k_i\rightarrow\infty$.  If we apply Khovanov's functor to this limiting complex, we have an infinite complex of chain groups; the Khovanov homology groups $Kh(\L)$ of $\L\subset M^r$ are then defined to be the (graded) homology groups of this complex.

\begin{figure}
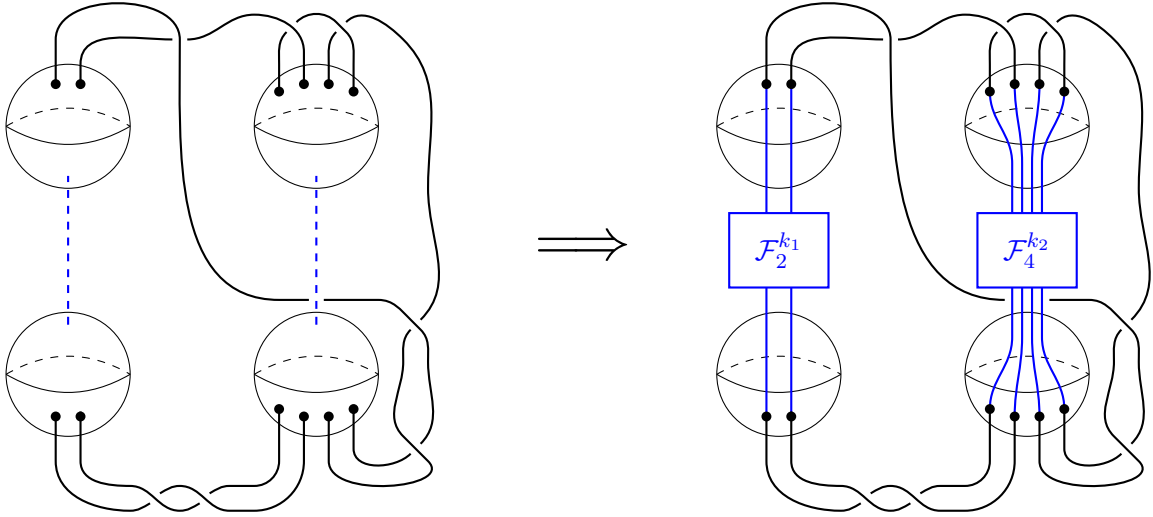

\centering
\LinMr
\hspace{.3in}
\resizebox{.5in}{!}{$\Longrightarrow$}
\hspace{.3in}
\LkinMr{\F_2^{k_1}}{\F_4^{k_2}}
\caption{On the left is an example of a link diagram $L$ for $\L\subset M^2$, with intersection numbers $n_1=2$ and $n_2=4$.  The dashed lines in blue indicate corresponding surgery spheres.  We then convert this into the diagram $\Lk$ on the right by changing the dashed lines into torus braids $\F_{n_i}^{k_i}$, also drawn in blue.  The complex $KC(L)$ is defined by letting the various $k_i$ go to infinity.}
\label{fig:L in Mr}
\end{figure}

In order to show that $Kh(\L)$ is indeed a link invariant, one must show that this construction is invariant with respect to isotopies in $M^r$.  In terms of our diagram $\Lk$, such isotopies incorporate the usual Reidemeister moves in the plane as well as movements of tangle endpoints on corresponding spheres, pushing strands `through' the attaching spheres, and the ability to `pull strands around' the spheres (and through the twists introduced into the diagram for $L$ along these dashed lines).  The usual Reidemeister moves are handled automatically from Khovanov's original construction.  The movements of the tangle endpoints, as well as the ability to push a strand through a sphere, are handled via simple properties of the full twist.  For pulling strands \emph{through} the twisting strands, the basic idea is that, for even numbers of strands $n_i$, the limiting complexes for the torus braids $\F_{n_i}^{k_i}$ as $k_i\rightarrow\infty$ can be written entirely in terms of diagrams where no strands pass from `top to bottom', and thus there are always gaps through which one can `pull' other strands in a consistent manner.

Many of these considerations follow directly from Rozansky's work in $S^2\times S^1$ \cite{Roz}, but we present the details here for completion.  In addition, there are two somewhat more substantial modifications to the arguments regarding the simplifications of multicones (Section 8.1 in \cite{Roz}; see Remark \ref{rmk:VSDR required} here) and the use of `quasi-triviality' of certain tangles (Section 7.3 in \cite{Roz}; see Remark \ref{rmk:JM not quasi-trivial?} here).

\subsection{Future work}
The construction above gives Khovanov homology groups for links up to isotopies within a fixed $M^r$, well-defined up to grading shifts.  It is not hard to generalize all of this to Bar-Natan's deformed Temperley-Lieb category, allowing for a Lee's homology for such links and a definition of an $s$-invariant similar to that of Jacob Rasmussen in \cite{Ras}.  We will explore this invariant and its associated bounds on the genus of cobordisms between links in a future paper with Ciprian Manolescu, Marco Marengon, and Sucharit Sarkar, to appear soon.

We will also show that, for knotifications $\K_\L \subset M^r$ of links $\L\subset S^3$ (used to define knot Floer homology for links \cite{OS}), handle slides of the underlying $M^r$ do not affect the chain homotopy equivalence class of our complexes, and using this will allow us to construct the Khovanov homology for these knotifications directly from the link diagram for $\L$ in $S^3$.  It is our hope that this alternative construction will be amenable to further study, including a lifting to the stable homotopy category following the work of \cite{LS,LLS} in a future paper.

\subsection{Organization of the paper}
This paper will be organized as follows.  Section \ref{sec:Infinite twist} will review the necessary homological algebra as it pertains to the Khovanov complexes in question before going on to produce a simplified Khovanov complex for the full twist on $n$ strands which satisfies certain important properties.  Much of this work is equivalent to similar work in \cite{Roz}, but is presented in a way that emphasizes the precise homological algebra being used.  In Section \ref{sec:Defining general Kh} we explore diagrams for links in $M^r$ before proving Theorem \ref{thm:Main Thm}, making precise the ideas presented here in the introduction.  We present example computations for some simple links in $M^1$ and $M^2$ in Section \ref{sec:Examples}.  Section \ref{sec:decat} discusses the decategorification of the invariant, and its relationships with the skein module $\sk(M^r)$ and WRT invariants.  Finally, in Section \ref{sec:knotification Kh} we present an alternative construction geared towards defining Khovanov homology for knotifications of links in $S^3$ directly from the given link diagram.

\subsection*{Acknowledgements}
The author would like to thank an anonymous referee for several very helpful suggestions that have improved the paper.  He would also like to thank Slava Krushkal, Sucharit Sarkar, Ciprian Manolescu, Andy Manion, and especially Matt Hogancamp for many very enlightening discussions.

\section{The Khovanov complex for the infinite full twist}
\label{sec:Infinite twist}
\subsection{Overview, conventions, and notation}
Let $R$ denote an arbitrary ground-ring.  In this section we will construct a semi-infinite chain complex for the infinite right-handed twist on an even number of strands.  This complex will be viewed as living in Bar-Natan's homotopy category of complexes of Temperley-Lieb diagrams and $R$-linear combinations of dotted cobordisms between them \cite{BN}.  In order to state the desired theorem, we first introduce some notational conventions.

\begin{definition}\label{def:notation list}
The following definitions and notations will be used throughout this paper.
\begin{itemize}
\item The variable $n$ will generally be reserved for the number of strands in a braid; when $n$ is even, it will be written as $n=2p$.
\item Tangles will generally be written as script capital letters.  In particular, we will use the notation $\F_n^k$ to indicate the torus braid of $k$ right-handed full-twists on $n$ strands, shown in Figure \ref{fig:Full Twist} below ($\F_n$ without the superscript will denote a single full twist).
\item The meaning of positive and negative crossings, as well as the meanings of 0-resolutions and 1-resolutions, is presented in Figure \ref{fig:crossing defs}.
\item The variable $n_{\F}^-$ represents the number of negative crossings present in a single copy of $\F_n$ (this is non-zero when the strands of $\F_n$ are not all oriented in the same direction).  Similarly we define $N_{\F}:=2n_{\F}^- - n_{\F}^+$, where $n_{\F}^+$ is the number of positive crossings in $\F_n$.  Later in Section \ref{sec:Defining general Kh}, when there are several $\F_{n_i}$ involved, we will use $n_i^-$ and $N_i$ (see Definition \ref{def:notations for surgery strands}).
\item If $\mathcal{Z}$ is some oriented tangle, then the notation $KC^*(\mathcal{Z})$ will denote the Khovanov complex of $\mathcal{Z}$ as described by Bar-Natan in \cite{BN}.  We will be employing Bar-Natan's conventions for the homological grading of terms in the complex:
\subitem The homological degree of a diagram $\delta$ is computed as the number of 1-resolutions taken to arrive at $\delta$, subtracted by the total number of negative crossings $n^-$ of the original oriented tangle $\mathcal{Z}$.
\subitem The $q$-degree of a diagram $\delta$ is computed as the number of 1-resolutions taken to arrive at $\delta$, subtracted by the normalization shift $N:=2n^--n^+$ coming from the numbers of positive ($n^+$) and negative ($n^-$) crossings in $\mathcal{Z}$.

In this way, $KC^*(\mathcal{Z})$ is a genuine graded invariant of the oriented $\mathcal{Z}$ up to chain homotopy equivalence, with no grading shifts necessary for Reidemeister moves.  The differential raises homological grading by one (ie $KC^*$ is a cochain complex) and respects the $q$-grading.  The asterisk refers to homological grading.
\item \label{it:shifts} Shifts in homological and $q$-grading of a complex $C^*$ will be denoted with $\h$ and $\q$, respectively.  Thus, the complex $\h^a \q^b KC^*(\mathcal{Z})$ indicates the Khovanov complex for $\mathcal{Z}$ shifted up in homological degree by $a$, and in $q$-degree by $b$.  The shifts take place before the use of the asterisk, so that the notation $\h^a \q^b KC^*(\mathcal{Z})$ should be thought of as implying $\big( \h^a \q^b KC\big) ^*(\mathcal{Z})$.
\item For a given chain complex $C^*$, the notation $C^*_{> a}$ will denote the truncated complex
\begin{equation}\label{eqn:truncated cx}
C^*_{> a}:= \begin{cases}
C^* & \mathrm{ if }\hspace{.1in}  *> a\\
\emptyset & \mathrm{ if }\hspace{.1in}  * \leq a
\end{cases}.
\end{equation}
Note that this ensures that $C^*_{> b}$ is a subcomplex of $C^*_{> a}$ as long as $b\geq a$, with the inclusion
\[C^*_{> b} \longhookrightarrow C^*_{> a}\]
inducing an isomorphism on homology in all homological degrees strictly greater than $b+1$.  As with the use of the asterisk, any shifts indicated are meant to take place before the truncation, so that $\h^a\q^b KC^*_{> c}(\mathcal{Z})$ implies $\big( \h^a \q^b KC\big)^*_{> c} (\mathcal{Z})$.
\item Single Temperley-Lieb diagrams within a complex $C^*$ will typically be denoted by greek letters.
We will use the notation $\h_{C^*}(\delta)$ to denote the homological grading of $\delta$ within $C^*$.  In many cases the complex will be clear from the context, and the subscript will be omitted.
\item For a Temperley-Lieb diagram $\delta$, the notation $\th(\delta)$ will indicate the \emph{through-degree} of $\delta$.  This is the number of strands within $\delta$ whose endpoints are on opposite ends of the diagram.  See Figure \ref{fig:Thru Deg} for clarification.
\item Identity cobordisms will be denoted by $I$, together with a subscript if necessary for clarification.  Dotted planar cobordisms between Temperley-Lieb diagrams will typically be written as $\phi_{i,j}:\delta_i\rightarrow\delta_j$.  More general dotted tangle cobordisms will often appear via Reidemeister moves and be notated by $\rho$.
\end{itemize}
\end{definition}

\begin{figure}
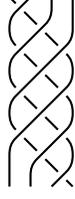

\centering
\FTex
\caption{The full twist, denoted by $\F_n$ (depicted here for $n=4$).}
\label{fig:Full Twist}
\end{figure}

\begin{figure}
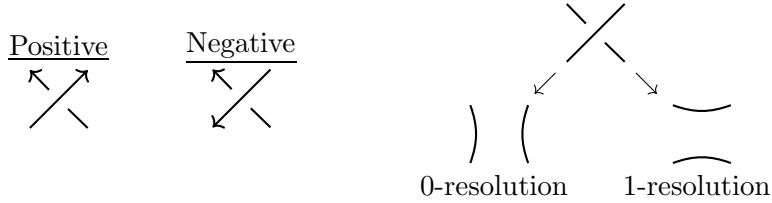

\[\CrossingDefs\]
\caption{On the left we show our conventions for positive and negative crossings.  On the right we show our conventions for a 0-resolution and a 1-resolution of a crossing.}
\label{fig:crossing defs}
\end{figure}

\begin{figure}
\centering
$\delta = \hspace{.04in}\thdegExA \hspace{.04in}\cong\hspace{.04in} \thdegExB 
$
\caption{The diagram $\delta$ shown here has through-degree $\th(\delta)=2$.  Any $\delta$ can be `pulled tight' so that $\th(\delta)$ is the cardinality of $\delta\cap\ell$ for a central horizontal line $\ell$, as in the second picture above.}
\label{fig:Thru Deg}
\end{figure}

With the notations indicated above, we state the goal of this section here.
\begin{theorem}\label{thm:inf twist complex}
Fix an even integer $n=2p$.  Let $\F_n$ denote an oriented full right-handed twist, and define $n_{\F}^-$ and $N_{\F}$ as above.  Then there exists a sequence of complexes $\KCsimp(k)$ satisfying the following properties.
\begin{enumerate}[(i)]
\item For each $k$ we have a chain homotopy equivalence
\[\KCsimp(k) \simeq \KChq{k(n_{\F}^- - 2p^2)}{k(N_{\F} - 2p(p+1))}^*(\F_n^k).\]
\item The truncated complexes $\KCsimp_{> -2k}(k)$ come equipped with inclusions
\[\KCsimp_{> -2}(1) \hookrightarrow \KCsimp_{>-4}(2) \hookrightarrow \cdots \hookrightarrow \KCsimp_{>-2k}(k)\hookrightarrow \cdots\]
\end{enumerate}
Thus there exists a well-defined stable limit
\[C^*(\F_n^\infty):= \hocolim_{k\rightarrow\infty} \KCsimp_{>-2k}(k)\]
that can be computed in any finite degree by truncating a complex that is chain homotopy equivalent to the Khovanov complex of a finite right-handed twist.  Furthermore, $\forall \delta\in C^*(\F_n^\infty)$, we must have $\h(\delta)\leq 0$ and $\th(\delta)=0$.
\end{theorem}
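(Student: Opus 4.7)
The plan is to follow Rozansky's categorified Jones--Wenzl projector strategy \cite{Roz}, carried out within Bar-Natan's dotted cobordism category. I would construct the complexes $\KCsimp(k)$ inductively on $k$, each time horizontally composing with one more copy of the full twist and then applying Bar-Natan-style simplification (delooping plus Gaussian elimination) to the result.

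For the base case $k=1$, I would apply the simplification machinery to $KC^*(\F_n)$ and then introduce the grading shift $\h^{n_{\F}^- - 2p^2}\q^{N_{\F} - 2p(p+1)}$ to obtain $\KCsimp(1)$. The shift is chosen so that the top homological degree sits at $\h = 0$, and so that after simplification the chain groups in the range $\h \in (-2, 0]$ contain only through-degree $0$ diagrams, while any residual positive-through-degree material is confined to $\h \leq -2$. For the inductive step, given $\KCsimp(k)$, one forms its planar composition with $\KCsimp(1)$; because Bar-Natan's construction is monoidal, the result is chain homotopy equivalent to $\h^{(k+1)(n_{\F}^- - 2p^2)}\q^{(k+1)(N_{\F} - 2p(p+1))}KC^*(\F_n^{k+1})$, and another pass of simplification produces $\KCsimp(k+1)$, establishing (i).

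The heart of the argument is a structural claim proved alongside the induction: in $\KCsimp(k)$, every chain group at homological degree $\h > -2k$ is generated by through-degree $0$ diagrams, and the subcomplexes of $\KCsimp(k)$ and $\KCsimp(k+1)$ on the range $\h > -2k$ coincide on the nose. This directly produces the inclusion $\KCsimp_{> -2k}(k) \hookrightarrow \KCsimp_{> -2(k+1)}(k+1)$ required for (ii) as the identity on the shared part, while the new degrees in $\h \in (-2(k+1), -2k]$ of $\KCsimp(k+1)$ consist only of further through-degree $0$ diagrams. The homotopy colimit $C^*(\F_n^\infty)$ is then well-defined because the direct system stabilizes in each finite homological degree; the bound $\h(\delta) \leq 0$ carries over from each $\KCsimp(k)$, and the through-degree condition follows because any $\delta \in C^*(\F_n^\infty)$ at homological degree $a$ already appears in $\KCsimp_{> -2k}(k)$ once $-2k < a$, where the structural claim forces $\th(\delta) = 0$.

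The main obstacle is proving the structural claim --- that after horizontally composing with one more full twist, the ensuing simplification cancels every positive-through-degree diagram in the range $\h > -2(k+1)$ while leaving the through-degree $0$ part of $\KCsimp(k)$ untouched. This demands careful identification of contractible pairs arising from saddle cobordisms between resolutions that differ by a turn-back, together with precise tracking of the homological-degree shifts these cancellations induce, so that the new cancellations accumulate exactly in the window $\h \in (-2(k+1), -2k]$. Rozansky carried this out for $n=2$ via a categorified Jones--Wenzl recursion, and the present proof would extend that recursion to general even $n = 2p$ while translating each step into Bar-Natan's explicit dotted-cobordism calculus.
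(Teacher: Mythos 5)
Your high-level plan matches the paper's: build $\KCsimp(k)$ inductively by concatenating one more full twist and simplifying, then prove a stability claim that makes the truncations nest. But you have correctly named the hard step ("the structural claim") and then not proved it, and the sketch you give of how to close it does not engage with the two things the paper actually has to do.

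First, the base case is not just "apply simplification to $KC^*(\F_n)$." The paper proves Theorem \ref{thm:single ft cx} by its own induction on the number of strands $n$, using the decomposition $\F_{n+1}=\E_{n+1}\cdot\F_n$ and the cup-sliding isotopy through the Jucys--Murphy braid $\E_{n+1}$ (Lemma \ref{lem:slide thru E}), carefully tracking how the Reidemeister shifts distribute the through-degree strata in homological degree. This is substantial and your proposal compresses it to a single sentence.

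Second, and more seriously, your inductive step claims the through-degree-$0$ part of $\KCsimp(k)$ is "left untouched" after concatenating with $\F_n$ and simplifying, so that the truncations literally coincide and the maps in property (ii) are genuine inclusions. Getting equality on the shared window (not merely a chain homotopy equivalence) is exactly where the paper does real work. It requires (a) recognizing the cup-sliding isotopies as \emph{very strong deformation retracts} so that Proposition \ref{prop:multicone VSDR equiv} and Corollary \ref{cor:Consistent Cone of Concats} apply, giving that the new multicone maps are $\pm\phi_{i,j}$ on the nose rather than only up to homotopy (see Remark \ref{rmk:VSDR required} for why the naive "commutes up to homotopy" shortcut fails), and (b) resolving the residual $\pm$ ambiguity, which the paper does by closing the strands with a fixed diagram $\gamma$ and invoking quasi-triviality of the full twist (Lemmas \ref{lem:FT quasitrivial} and \ref{lem:ft cone preserved}) to make the signs globally consistent. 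Your proposal's framing in terms of "contractible pairs arising from saddle cobordisms between resolutions that differ by a turn-back" is the right raw material (these are the delooping/Gaussian-elimination moves underlying the Reidemeister 2 simplifications), but it does not explain why the resulting maps reproduce the original differentials with consistent signs; absent that, you only get a direct system of homotopy equivalences, which is weaker than the stated inclusions and would change the character of the hocolim.
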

Compare Theorem \ref{thm:inf twist complex} with Theorems 6.6 and 6.7 in \cite{Roz}.  Our complexes $\KCsimp(k)$ here correspond to his $C^\sharp$ complexes in that paper.  The rest of this section is devoted to proving this theorem.

\begin{remark}
The reader who is familiar with Rozansky's earlier work \cite{Rozearly} using infinite twists to construct the categorified projectors of Cooper-Krushkal \cite{CK} may find Theorem \ref{thm:inf twist complex} puzzling.  In fact the limiting complex for the infinite twist depends on the system of maps used to define the limit.  In \cite{Rozearly}, the maps are chosen in such a way as to fix the `left' end of each complex, so that low homological degrees stabilize in the limit while high degrees get `pushed away to infinity'; this limits to a categorified highest weight projector in the sense of \cite{CK}, used to define colored Khovanov homology.  Here, however, the maps are chosen so as to fix the `right' end of each complex, so that high homological degrees stabilize while low degrees get pushed away; this limits to a categorified weight-zero projector, denoted by $P_{n,0}$, as described in \cite{Roz}.
\end{remark}

\subsection{Simplifying Khovanov complexes}
We begin by recalling the defining aspect of $KC^*(\cdot)$.  Given an oriented tangle diagram $\mathcal{Z}$, we construct $KC^*(\mathcal{Z})$ by finding crossings in $\mathcal{Z}$ and defining
\begin{equation}\label{eqn:KC main def}
KC^*(\crossing) = \h^{-n^-}\q^{n^+-2n^-}\left(\underline{\vres} \longrightarrow \q\hres\right)
\end{equation}
where the underlined term is in homological grading zero, the map is a saddle cobordism, and $n^+$ and $n^-$ are either 1 or 0 depending on whether the crossing was positively or negatively oriented.  The full $KC^*(\mathcal{Z})$ is then a large tensor product over all of these two term complexes, with diagrams and cobordisms stitched together in the usual sense of planar algebras (see \cite{BN} for more details).

In order to simplify $KC^*(\mathcal{Z})$, we have several tools at our disposal.  There is the obvious approach of breaking $\mathcal{Z}$ up into separate tangles and trying to simplify their Khovanov complexes individually, before tensoring them all back together again.  There is also the idea of using Equation \ref{eqn:KC main def} to view $KC^*(\mathcal{Z})$ as the cone on a chain map $KC^*(\vres)\rightarrow KC^*(\hres)$, and attempting to simplify $KC^*(\vres)$ and/or $KC^*(\hres)$ separately.  This idea can be expanded into viewing $KC^*(\mathcal{Z})$ as a large multicone of various cobordism maps in the hopes of simplifying various diagrams within the multicone.  However, the presence of the degree shifts causes a slight bit of trouble because the orientation of the original crossing (and thus the entire diagram) cannot be maintained on both of its resolutions.  Therefore we adopt the following convention, also adopted in the author's earlier related paper \cite{MW3}.

\begin{definition}\label{def:n- convention}
The symbol $\KC^*(\cdot)$ will stand for the \emph{renormalized Khovanov complex}
\begin{equation}\label{eqn:renormalized KC}
\KC^*(\cdot):=\KChq{n^-}{N}(\cdot)
\end{equation}
where the symbols $n^-$ and $N:=2n^- - n^+$ will count positive and negative crossings in whatever tangle they are attached to.  Thus we will write Equation \ref{eqn:KC main def} in the form
\begin{equation}\label{eqn:KC cone def}
\KC^*(\crossing) = \cone\left( \KC^*(\vres) \longrightarrow \q\KC^*(\hres) \right)
\end{equation}
and it will be understood that the various $n^-$ and $N$ terms that are implied by the notation are actually different numbers.  With this convention in place, it will not matter what orientations are assigned to the resolved diagrams within the cone of Equation \ref{eqn:KC cone def}.  
\end{definition}
Notice that this convention ensures that, for any tangle $\mathcal{Z}$, $\KC^*(\mathcal{Z})$ has left-most term in homological grading zero (ie taking 0-resolutions at every crossing), and right-most term in homological grading equal to the number of crossings in $\mathcal{Z}$ (ie taking 1-resolutions at every crossing).  However, the trade-off for this normalizing convention is that, since the `true' grading of $KC^*(\cdot)$ is invariant under Reidemeister moves, we must incur grading shifts when performing Reidemeister moves with $\KC^*(\cdot)$.

\begin{lemma}\label{lem:Reid grading shifts}
Suppose $D_1$ and $D_2$ are two tangle (or link) diagrams that are related by Reidemeister moves (ie the diagrams are tangle isotopic, representing the same tangle or link).  Let $n^-_1$ and $n^-_2$ be the number of negative crossings in the diagrams $D_1$ and $D_2$ respectively, and similarly for the $q$-grading renormalizations $N_1$ and $N_2$.  Then using the convention of Definition \ref{def:n- convention},
\begin{equation}\label{eqn:Reid grading shifts}
\KC^*(D_1) \simeq \h^{n_1^- - n_2^-}\q^{N_1 - N_2} \left( \KC^*(D_2) \right).
\end{equation}
\end{lemma}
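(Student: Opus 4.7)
The plan is to reduce the statement directly to Bar-Natan's Reidemeister invariance of the unnormalized complex $KC^*(\cdot)$ and then bookkeep the grading shifts introduced by the renormalization in Definition \ref{def:n- convention}. As noted in the conventions list, Bar-Natan's $KC^*$ is built so that no grading shifts are needed under any Reidemeister move: the homological shift by $-n^-$ and the $q$-shift by $n^+ - 2n^- = -N$ built into the definition in (\ref{eqn:KC main def}) absorb exactly the contributions that would otherwise change across an isotopy. So $KC^*(D_1)\simeq KC^*(D_2)$ is immediate from \cite{BN}, obtained by composing the equivalences for each individual Reidemeister move along a sequence connecting $D_1$ to $D_2$.

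The core step is then purely formal. By Definition \ref{def:n- convention} we have $\KC^*(D_i) = \h^{n_i^-}\q^{N_i} KC^*(D_i)$, so applying the shift functor $\h^{n_1^-}\q^{N_1}$ to both sides of $KC^*(D_1)\simeq KC^*(D_2)$ gives
\[
\KC^*(D_1) \;=\; \h^{n_1^-}\q^{N_1} KC^*(D_1) \;\simeq\; \h^{n_1^-}\q^{N_1} KC^*(D_2) \;=\; \h^{n_1^- - n_2^-}\q^{N_1 - N_2}\,\h^{n_2^-}\q^{N_2} KC^*(D_2),
\]
where we have used that the shift functors are additive in their exponents and commute with chain homotopy equivalences. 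Recognizing the rightmost factor $\h^{n_2^-}\q^{N_2} KC^*(D_2)$ as $\KC^*(D_2)$ yields (\ref{eqn:Reid grading shifts}).

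There is no serious obstacle here; the lemma is essentially bookkeeping. The renormalization $\KC^*$ has been introduced so that every complex starts in homological grading zero at the all-zero-resolution diagram and has rightmost term at the number of crossings, regardless of crossing signs, which is what keeps later multicone constructions uniform. The present lemma simply records the price of this convenience: the price is paid in overall grading shifts whenever one trades one diagram representative for another. The only mild care needed is to observe that because Bar-Natan's invariance for each individual Reidemeister move is \emph{already} an honest equivalence with no leftover shifts, composing along a sequence of moves from $D_1$ to $D_2$ accumulates no additional shifts, so the total discrepancy between $\KC^*(D_1)$ and $\KC^*(D_2)$ depends only on the endpoint data $(n_1^-, N_1)$ and $(n_2^-, N_2)$, not on the particular path of moves used.
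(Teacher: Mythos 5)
Your proof is correct and takes essentially the same approach as the paper: expand $\KC^*$ via Definition \ref{def:n- convention}, invoke the Reidemeister invariance of Bar-Natan's unnormalized $KC^*$, and then absorb the residual difference of normalizations into an overall $\h^{n_1^- - n_2^-}\q^{N_1 - N_2}$ shift. The only cosmetic difference is that the paper manipulates the exponents inside the $\h^{\cdot}\q^{\cdot}KC$ notation directly rather than first applying a shift functor to both sides.
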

\begin{proof}
\begin{align*}
\KC^*(D_1) &= \KChq{n^-_1}{N_1}^*(D_1)\\
&= \KChq{n_1^- - n_2^- + n_2^-}{N_1 - N_2 + N_2}^*(D_1)\\
&\simeq \KChq{n_1^- - n_2^- + n_2^-}{N_1 - N_2 + N_2}^*(D_2)\\
&\simeq \h^{n_1^- - n_2^-}\q^{N_1 - N_2} \left( \KChq{n_2^-}{N_2}^*(D_2) \right)\\
&= \h^{n_1^- - n_2^-}\q^{N_1 - N_2} \left( \KC^*(D_2) \right)
\end{align*}
\end{proof}
In other words, Reidemeister moves shift the renormalized homological grading of $\KC^*(\cdot)$ by precisely the number of negative crossings that were removed.  In particular, using negative Reidemeister 1 moves and Reidemeister 2 moves to eliminate crossings both shift homological degree by 1.  Similar statements hold for shifting of $q$-grading, but we will be focusing on the homological grading for the most part here.  Compare Lemma \ref{lem:Reid grading shifts} with the various shifts described by Rozansky throughout \cite{Roz}, and also Proposition 2.19 in \cite{MWMA}.

With the notations of Equations \ref{eqn:renormalized KC} and \ref{eqn:KC cone def} in place, we explore the notion of viewing Khovanov complexes as large multicones.  If we are planning on simplifying individual pieces of such a multicone via chain homotopy equivalences, we will need a construction that can keep track of these homotopies.  For this we recall some general homological algebra (compare to Section 8.1 in \cite{Roz}) for complexes over additive categories.


\begin{definition}\label{def:gen multicone def}
Suppose we are given the following data in a fixed category of chain complexes over some additive category:
\begin{itemize}
\item A finite index set $\C$ with a $\Z$-grading $\h_{\C}:\C\rightarrow\Z$.
\item For all $i\in\C$, a chain complex $(A_i^*,d_i)$.
\item For all $i,j\in\C$, a map (not necessarily a chain map) $f^*_{ij}:A^*_i\rightarrow \h^{\h_{\C}(j)-\h_{\C}(i) -1} A_j^*$ satisfying
\begin{itemize}
	\item $f_{ii}^*:=d_i$
	\item For all $j\neq i$ in $\C$ with $\h_{\C}(j)\leq\h_{\C}(i)$, $f_{ij}^*:=0$
	\item For all $i,k\in\C$, $\sum_{j\in\C} f_{jk}^* f_{ij}^* = 0$.
\end{itemize}
\end{itemize}
Then we can form the \emph{multicone} 
\begin{equation}\label{eq:gen multicone eq}
M=\underset{i,j\in\C}{\mathrm{Multicone}} \left( A_i^* \xrightarrow{f_{ij}^*} \h^{\h_{\C}(j)-\h_{\C}(i) - 1} A_j^* \right)
\end{equation}
 which is a chain complex $(M,d_M)$ whose terms are the direct sum of all of the terms of the complexes $A_i$
\[M:=\bigoplus_{i\in\C} A_i\]
and whose differential is the sum of all of the maps $f_{ij}$
\[d_M:=\sum_{i,j\in\C} f_{ij}.\]
For a term $\alpha\in A_i^*\subset M$, we determine the homological grading as the sum of the contributions of viewing $\alpha$ in $A_i^*$ and viewing $A_i^*$ in $\C$
\[\h_M(\alpha):= \h_{A_i}(\alpha) + \h_{\C}(i).\]
\end{definition}
The reader may verify that this definition gives a well defined chain complex.  When $\h_\C(j)-\h_\C(i)=1$, the maps $f_{ij}^*$ assemble to define chain maps; when $\h_\C(j)-\h_\C(i)=2$, the maps $f_{ij}^*$ assemble to form null-homotopies for the compositions of any two of these chain maps; and so on.  Because the original system $\C$ was finite, this process must eventually end, and so of course the sum in the definition of $d_M$ is finite.  When $\C=\{0,1\}$ and we have the single chain map $f_{01}^*:A_0\rightarrow A_1$, this construction recovers the usual cone on $f_{01}^*$.

\begin{remark}
We employ the term `multicone' in Definition \ref{def:gen multicone def} following \cite{Roz}.  A complex built in this manner is also often referred to as a \emph{totalization} or \emph{convolution} of a \emph{twisted complex}.  See for instance \cite{BK}.
\end{remark}

Note that \emph{any} finite chain complex $\C^*$ can be represented as a multicone by declaring that $\C$ is indexed by the terms in $\C^*$ while the maps $f_{i(i+1)}$ are given by the differentials of $\C^*$.  Meanwhile all of the maps $f_{ij}^*$ with $\h_\C(j)-\h_\C(i)\geq2$ are zero maps (ie no homotopies are needed).

\begin{proposition}\label{prop:gen multicone equiv}
Given a chain complex presented as a multicone $M$ as in Equation \ref{eq:gen multicone eq}, and given chain homotopy equivalences $\iota_i:A^*_i\rightarrow (A_i')^*$ for each $i\in\C$, there exist maps $(f_{ij}')^*$ such that we can form the multicone
\[M':=\underset{i,j\in\C}{\mathrm{Multicone}} \left( (A'_i)^* \xrightarrow{(f_{ij}')^*} \h^{\h_{\C}(j)-\h_{\C}(i) - 1} (A_j')^* \right)\]
that is chain homotopy equivalent to $M$:
\[M\simeq M'.\]
\end{proposition}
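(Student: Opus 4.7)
The plan is to deduce this via the homological perturbation lemma applied to a suitable direct sum. First I would upgrade each given homotopy equivalence $\iota_i \colon A_i^* \to (A_i')^*$ to strong deformation retract (SDR) data: choose $\pi_i \colon (A_i')^* \to A_i^*$ and $h_i \colon A_i^* \to A_i^{*-1}$ satisfying $\pi_i \iota_i = \mathrm{id}$, $\iota_i \pi_i - \mathrm{id} = d_i h_i + h_i d_i$, together with the usual side conditions $h_i^2 = 0$, $h_i \iota_i = 0$, $\pi_i h_i = 0$. Setting $\iota := \bigoplus_i \iota_i$, $\pi := \bigoplus_i \pi_i$, $h := \bigoplus_i h_i$ then gives SDR data between the block-diagonal complexes $\widetilde{A} := \bigl( \bigoplus_i A_i^*, \, \bigoplus_i d_i \bigr)$ and $\widetilde{A}' := \bigl( \bigoplus_i (A_i')^*, \, \bigoplus_i d_i' \bigr)$.

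Next, I would view $M$ as the perturbation of $\widetilde{A}$ by $\delta := \sum_{i \neq j} f_{ij}^*$, i.e.\ the off-diagonal part of $d_M$. The key observation is that $\delta$ strictly raises the $\h_\C$-grading (since $f_{ij}^* = 0$ for $i \neq j$ whenever $\h_\C(j) \leq \h_\C(i)$) while $h$ preserves $\h_\C$; because $\C$ is finite, some iterate $(\delta h)^N$ must vanish, so $\mathrm{id} - \delta h$ is invertible with inverse $\sum_{k \geq 0} (\delta h)^k$. The homological perturbation lemma then produces a transferred differential of the shape $\bigoplus_i d_i' + \pi \,\delta\, (\mathrm{id} - h \delta)^{-1} \iota$ on $\widetilde{A}'$, together with perturbed SDR data realizing a chain homotopy equivalence between $M$ and the perturbed complex. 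Defining $M'$ to be this perturbed complex and reading off the block components of the new differential yields the desired maps $(f_{ij}')^*$.

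Finally, one must verify that $M'$ genuinely satisfies the multicone conditions of Definition~\ref{def:gen multicone def}. The diagonal blocks remain $d_i'$ because every correction term in the perturbed differential contains at least one factor of $\delta$, which strictly shifts $\h_\C$, so no contribution to $(A_i')^* \to (A_i')^*$ is produced; by the same reasoning, $(f_{ij}')^*$ must vanish whenever $j \neq i$ and $\h_\C(j) \leq \h_\C(i)$. The cocycle identities $\sum_j (f_{jk}')^* (f_{ij}')^* = 0$ then follow automatically from $d_{M'}^2 = 0$. The main technical point I anticipate having to address is the SDR upgrade within Bar-Natan's category of dotted cobordism complexes; this is standard in the chain-homotopy category, but worth verifying explicitly since the ambient category is only additive rather than abelian. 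Comparison with Section~8.1 of \cite{Roz} should confirm that the resulting formulas for $(f_{ij}')^*$ match the transferred higher homotopy operations appearing there.
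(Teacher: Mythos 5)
Your route via the homological perturbation lemma is genuinely different from the paper's, which proves this in one line by observing that the statement generalizes the fact that the homotopy category of complexes over an additive category is triangulated, citing Proposition 2 of Bondal--Kapranov. Your approach is actually closer in spirit to how the paper handles the \emph{next} result, Proposition~\ref{prop:multicone VSDR equiv}, where the transferred maps $(f_{ij}')^*$ are written out explicitly under the stronger hypothesis that each $\iota_i$ is a very strong deformation retract; the payoff of your route, were it completed, would be to subsume both propositions in one computation.

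There is, however, a gap in your opening step. An arbitrary chain homotopy equivalence $\iota_i \colon A_i^* \to (A_i')^*$ cannot in general be promoted to SDR data with $\pi_i \iota_i = \mathrm{id}_{A_i}$: that forces $\iota_i$ to be a split monomorphism, which fails for instance when $A_i^*$ is nonzero and contractible while $(A_i')^* = 0$, so that $\iota_i = 0$. (There is also a type mismatch as written: a homotopy $h_i \colon A_i^* \to A_i^{*-1}$ cannot witness $\iota_i \pi_i \simeq \mathrm{id}$, which is an endomorphism of $(A_i')^*$, not of $A_i^*$.) The standard repair is either to factor each $\iota_i$ through its mapping cylinder $\mathrm{Cyl}(\iota_i)$, onto which both $A_i^*$ and $(A_i')^*$ admit deformation retractions with side conditions, and transfer the perturbation in two stages; or to invoke the version of the perturbation lemma adapted to arbitrary homotopy equivalences rather than SDRs. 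This is more than a bookkeeping issue inside Bar-Natan's category — it already fails for complexes of abelian groups — so it must be addressed before the rest of the argument can proceed. Once repaired, the remainder is sound: the nilpotence of $\delta h$ from the strict $\h_\C$-shift and the finiteness of $\C$ is exactly right, as is reading off the multicone axioms for $(f_{ij}')^*$ from $d_{M'}^2 = 0$ and the $\h_\C$-filtration.
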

\begin{proof} This is a standard result generalizing the fact that the homotopy category of complexes over an additive category is triangulated (Proposition 2 in \cite{BK}).
\end{proof}

Proposition \ref{prop:gen multicone equiv} tells us that we can replace complexes within a multicone with chain homotopy equivalent ones, but it tells us nothing about the maps $(f_{ij}')^*$ that result.  For that we need some stronger assumptions.

\begin{definition}\label{def:VSDR}
A chain map $\iota:A^*\rightarrow (A')^*$ will be called a \emph{very strong deformation retract} if there are maps $\iota^{-1}:(A')^*\rightarrow A^*$ and $H:A^*\rightarrow\h^{-1}A^*$ satisfying:
\begin{enumerate}[(i)]
\item\label{it:SDR} $\iota^{-1}$ is a chain map such that $\iota\circ\iota^{-1}=I_{A'}$ and $\iota^{-1}\circ\iota=I_A + dH +Hd$.  In other words, $\iota$ is a strong deformation retract.
\item\label{it:side cond} The maps involved satisfy the following \emph{side conditions}: $HH=0,\quad \iota H=0,\quad H\iota'=0$.
\end{enumerate}
\end{definition}

Given a strong deformation retract $\iota$ satisfying only item \eqref{it:SDR} above, it is always possible to replace $\iota$ with a very strong deformation retract $\tilde{\iota}$ satisfying item \eqref{it:side cond} as well (see \cite{Stash}, as well as \cite{HomPert} for more considerations on such maps and constructions similar to the ones below).  In the cases of interest in this paper, however, our deformation retracts will all start out very strong.

\begin{proposition}\label{prop:multicone VSDR equiv}
In the situation of Proposition \ref{prop:gen multicone equiv}, suppose all of the maps $\iota_i$ are very strong deformation retracts (with accompanying maps $\iota_i^{-1}$ and $H_i$).  Then the maps $(f_{ij}')^*$ used to build $M'$ (satisfying $M'\simeq M$) are sums of terms of the form $\iota( f + fHf + fHfHf +\cdots)\iota^{-1}$ as indicated in slightly more detail below:
\[f_{i\ell}' = \sum_{(j_1,\dots,j_k)} \iota_\ell f_{j_k \ell} H_{j_k} f_{j_{k-1} j_k} \cdots f_{j_2j_3} H_{j_2} f_{j_1j_2} H_{j_1} f_{ij_1}\iota_i^{-1}\]
where we sum over all sequences of $j$'s that `partition the path' from index $i$ to index $\ell$ (disallowing the differentials $f_{jj}$).
\end{proposition}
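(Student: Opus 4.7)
The plan is to realize this as an instance of the homological perturbation lemma applied at the level of the underlying direct sum. Write the multicone as $M = \bigl(\bigoplus_{i\in\C} A_i^*, d_M\bigr)$ and split the differential as $d_M = d + f$, where $d := \bigoplus_{i\in\C} d_i$ is the block-diagonal ``internal'' piece and $f := \sum_{\h_\C(j) > \h_\C(i)} f_{ij}$ collects all strictly off-diagonal maps. The individual very strong deformation retracts assemble blockwise into
\[\iota := \bigoplus_{i\in\C} \iota_i, \qquad \iota^{-1} := \bigoplus_{i\in\C} \iota_i^{-1}, \qquad H := \bigoplus_{i\in\C} H_i,\]
which exhibit a very strong deformation retract between $(\bigoplus_i A_i^*, d)$ and $(\bigoplus_i (A_i')^*, d')$, where $d' := \bigoplus_i d_i'$. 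The side conditions $HH=0$, $\iota H = 0$, $H\iota^{-1} = 0$ are inherited termwise from those on each summand.

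First I would apply the homological perturbation lemma to the perturbation $f$ of this VSDR (see \cite{Stash,HomPert}). The key point for finiteness of the usually-formal series is that each nonzero $f_{ij}$ strictly raises the $\h_\C$-grading while $H$ preserves it, so $fH$ is nilpotent as an endomorphism of $\bigoplus_i A_i^*$: after at most $|\C|-1$ compositions any such term must vanish. Consequently the formula
\[d' + f' := d' + \iota\bigl(f + fHf + fHfHf + \cdots\bigr)\iota^{-1}\]
produced by HPL is a finite sum, and together with the corresponding finite formulas for the new chain equivalence and homotopy it yields a chain homotopy equivalence $(M, d+f) \simeq (M', d'+f')$. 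This recovers Proposition \ref{prop:gen multicone equiv} and, more importantly, provides an explicit formula for the perturbed differential.

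Next I would unpack the block decomposition of $f'$. Since $\iota, \iota^{-1}, H$ are all block-diagonal while $f$ decomposes as $\sum_{i \ne j} f_{ij}$, any non-zero summand of $\iota\bigl(f(Hf)^{k}\bigr)\iota^{-1}$ contributing to the $A_i' \to A_\ell'$ component must be of the form
\[\iota_\ell \, f_{j_k \ell}\, H_{j_k}\, f_{j_{k-1} j_k}\, H_{j_{k-1}} \cdots H_{j_2}\, f_{j_1 j_2}\, H_{j_1}\, f_{i j_1}\, \iota_i^{-1}\]
for a sequence of indices $i, j_1, j_2, \dots, j_k, \ell$ along which $\h_\C$ is strictly increasing (automatic since each $f$ raises $\h_\C$ and the diagonal terms $f_{jj}=d_j$ were already absorbed into $d$). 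Summing over all such sequences produces precisely the formula claimed in the statement, and the total grading shift of each summand is $\h_\C(\ell)-\h_\C(i)-1$ by telescoping, consistent with $f'$ being a differential on the multicone $M'$.

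The main potential obstacle is the termination of the perturbation series, since the homological perturbation lemma is in general purely formal; here termination is provided for free by the $\h_\C$-filtration of the multicone, so everything reduces to standard HPL together with routine block-matrix bookkeeping. I would finish with the remark that in the edge case $\h_\C(\ell)-\h_\C(i)=1$ the sum collapses to the single term $\iota_\ell f_{i\ell}\iota_i^{-1}$, recovering the usual transfer of a chain map through a VSDR.
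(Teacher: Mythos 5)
Your proposal is correct and is essentially the paper's own argument: the paper writes down exactly the homological perturbation lemma formulas $\iota_{MM'}=\iota(I+fH+fHfH+\cdots)$, $\iota_{M'M}=(I+Hf+HfHf+\cdots)\iota^{-1}$, $H_M=H+HfH+\cdots$ and verifies them directly (citing \cite{HomPert} for ``constructions similar to the ones below''), whereas you package the same computation by assembling the blockwise VSDR, splitting $d_M=d+f$, and invoking HPL abstractly, with the nilpotence of $fH$ coming from the strict increase of $\h_\C$ on each $f_{ij}$. The mathematical content, including the role of the side conditions in making one composite the identity, matches the paper's.
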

\begin{proof}
The finiteness of $\C$ ensures that the sums above are finite.  In this case, one can check explicitly that the maps $f_{i\ell}'$ satisfy the conditions needed to build the multicone $M'$.  Then one can build explicit maps $\iota_{MM'}:M\rightarrow M'$ and $\iota_{M'M}:M'\rightarrow M$ written as follows:
\[\iota_{MM'} = \iota(I+fH + fHfH + \cdots),\qquad \iota_{M'M} = (I+Hf+HfHf+\cdots)\iota^{-1}.\]
All of these maps are to be interpreted as sums over `partitions of paths between various indices' as in the expansion of $\iota(f+fHf+fHfHf+\cdots)\iota^{-1}$ above.  Again, one can check explicitly that these are indeed chain maps (up to this point, the assumption that the various $\iota$ are very strong deformation retracts is not used).  The composition $\iota_{MM'}\circ\iota_{M'M}$ is actually equal to identity thanks to the side conditions \eqref{it:side cond} of Definition \ref{def:VSDR}.  The composition $\iota_{M'M}\circ\iota_{MM'}$ is only homotopic to identity, with a homotopy of the form $H_{M}=H+HfH+HfHfH+\cdots$.  The details of this computation are left to the energetic reader.
\end{proof}
Note that, even if $M^*$ required no non-zero homotopies $f_{ij}^*$ for $\h_\C(j)-\h_\C(i)\geq 2$, the equivalent $(M')^*$ will normally require such homotopies within its multicone structure.

We now specialize towards our goal of simplifying Khovanov complexes of tangles.  Let $\mathcal{C}^*$ represent any chain complex in Bar-Natan's category of planar diagrams and dotted cobordisms.

\begin{definition}\label{def:Multicone notation}
The notation
\begin{equation}\label{eqn:tangle Multicone notation}
\mathcal{C}^*=\mConeFull{\mathcal{Z}_i}{\mathcal{Z}_j}{\mathcal{C}^*}{\mathcal{Z}_i}{\phi_{i,j}}{\mathcal{Z}_j}
\end{equation}
indicates that the complex $\mathcal{C}^*$ is a multicone over various chain maps of complexes, where each individual complex is the Khovanov complex of a given tangle diagram $\mathcal{Z}_i$, and each $\phi_{i,j}$ is a dotted tangle cobordism (more generally, an $R$-linear combination of such cobordisms) from $\mathcal{Z}_i$ to $\mathcal{Z}_j$ that determines a chain map of dotted cobordisms $\phi_{i,j}^*:\KC^*(\mathcal{Z}_i)\rightarrow\KC^*(\mathcal{Z}_j)$.  Often the tangles $\mathcal{Z}_i$ will be genuine Temperley-Lieb diagrams $\delta_i$ with maps $\phi_{i,j}=0$ for $\h_{\C^*}(\delta_j)-\h_{\C^*}(\delta_i)\geq2$, so that we will see the simpler
\begin{equation}\label{eqn:Multicone notation}
\C^*=\mCone{\delta_i}{\delta_j}{\C^*}{\delta_i}{\phi_{i,j}}{\delta_j}.
\end{equation}
This will be the case when we are taking a known chain complex and choosing to view it as a multicone over its constituent Temperley-Lieb diagrams.
\end{definition}

If we are not concerned with the maps within such a multicone of cobordism maps, we have the following version of Proposition \ref{prop:gen multicone equiv}.
\begin{corollary}\label{cor:Cone of Concats}
Let $\mathcal{X}$ be some tangle.  Suppose $\C^*$ is a complex of dotted cobordisms $\phi_{i,j}$ between Temperley-Lieb diagrams $\delta_i,\delta_j$ where each $\delta_i$ can be concatenated with $\mathcal{X}$.  Suppose for each $i$ we have a tangle isotopy 
\[\rho_i:\braidAA[.15cm]{\mathcal{X}}{\delta_i} \xrightarrow{\cong} \braidA[.15cm]{\mathcal{X}'_i}.\]
Then there exist maps $(\phi_{i,j}')^*$ fitting into a multicone structure so that
\begin{equation}\label{eqn:Cone of Concats}
\begin{split}
M^*:=\mCone{\delta_i}{\delta_j}{\C^*} {\braidAA[.15cm]{\mathcal{X}}{\delta_i}} {\left(\braidAAmap[.15cm]{I_{\mathcal{X}}}{\phi_{i,j}}\right)} {\braidAA[.15cm]{\mathcal{X}}{\delta_j}} \\ 
 \simeq \mConeFullhqs{\delta_i}{\delta_j}{\C^*}{\mathcal{X}'_i}{(\phi'_{i,j})}{\mathcal{X}'_j}{a_i}{b_i}{a_j}{b_j}
\end{split}
\end{equation}
where the shifts $a_i,b_i,a_j,b_j$ are determined by applying Lemma \ref{lem:Reid grading shifts} to the isotopies $\rho_i$ and $\rho_j$.

Moreover, for the homological degree of any diagram $\epsilon$ coming from some $\KC^*(\mathcal{X}'_i)$ within the multicone $M^*$, we have
\begin{equation}\label{eqn:HomDeg in Multicone}
\h_{M^*}(\epsilon) = \h_{\KC^*(\mathcal{X}'_i)}(\epsilon) + \h_{\C^*}(\delta_i) + a_i
\end{equation}
and similarly for the $q$-degree.
\end{corollary}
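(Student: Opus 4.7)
The plan is to realize both sides of (\ref{eqn:Cone of Concats}) as instances of Definition \ref{def:gen multicone def} and then apply Proposition \ref{prop:gen multicone equiv}. First I would cast $M^*$ in the language of Definition \ref{def:gen multicone def} by taking the indexing set $\C$ to be the diagrams $\delta_i$ of $\C^*$ with grading $\h_\C := \h_{\C^*}$, constituent complexes $A_i^* := \KC^*(\mathcal{X} \otimes \delta_i)$ formed via Bar-Natan's planar composition in the $\delta$-slot, and structural maps $f_{ij}^* := I_{\mathcal{X}} \otimes \phi_{i,j}^*$. That the $f_{ij}^*$ satisfy the axioms of a multicone is immediate from functoriality of planar composition applied to the given presentation of $\C^*$ as a multicone via the $\phi_{i,j}$.

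Next, for each $i$ the tangle isotopy $\rho_i$ is a finite composition of Reidemeister moves, so Lemma \ref{lem:Reid grading shifts} yields a chain homotopy equivalence
\[\iota_i \colon \KC^*(\mathcal{X} \otimes \delta_i) \xrightarrow{\simeq} \h^{a_i}\q^{b_i}\KC^*(\mathcal{X}'_i),\]
with $a_i, b_i$ the differences of negative-crossing counts and normalization shifts between the two tangle diagrams on either side of $\rho_i$. Feeding the collection $\{\iota_i\}$ into Proposition \ref{prop:gen multicone equiv} produces the maps $(\phi'_{i,j})^*$ and the required chain homotopy equivalence $M^* \simeq M'$ to the right-hand multicone of (\ref{eqn:Cone of Concats}).

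The grading formula (\ref{eqn:HomDeg in Multicone}) then falls out directly from the rule $\h_M(\alpha) = \h_{A_i}(\alpha) + \h_\C(i)$ of Definition \ref{def:gen multicone def}: with $A_i' = \h^{a_i}\q^{b_i}\KC^*(\mathcal{X}'_i)$, any diagram $\epsilon$ coming from $\KC^*(\mathcal{X}'_i)$ has internal homological degree $\h_{\KC^*(\mathcal{X}'_i)}(\epsilon) + a_i$, and the indexing contribution is $\h_{\C^*}(\delta_i)$; summing gives precisely the stated formula, and the $q$-grading statement follows by the same argument applied to $b_i$.

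The main obstacle I anticipate is the verification that $I_{\mathcal{X}} \otimes \phi_{i,j}^*$ genuinely assembles into a multicone structure on the $\KC^*(\mathcal{X} \otimes \delta_i)$; this is essentially bookkeeping in Bar-Natan's planar algebra, reducing via functoriality of planar composition to the fact that $\C^*$ itself is a chain complex, but it needs to be spelled out carefully so that the resulting complex really does equal $\KC^*$ of the concatenated tangle. By contrast, the variation of $a_i$ and $b_i$ across indices causes no difficulty: Proposition \ref{prop:gen multicone equiv} accommodates chain homotopy equivalences between arbitrary complexes regardless of their internal shifts, so the index-dependent shifts are absorbed automatically into the revised maps $(\phi'_{i,j})^*$.
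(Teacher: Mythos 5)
Your proposal is correct and follows essentially the same route as the paper: both write $\C^*$ as a multicone over its individual Temperley-Lieb diagrams (via Equation \ref{eqn:Multicone notation}), concatenate with $\mathcal{X}$ to present $M^*$ in the form of Definition \ref{def:gen multicone def}, then apply Proposition \ref{prop:gen multicone equiv} to the chain homotopy equivalences supplied by the $\rho_i$ and Lemma \ref{lem:Reid grading shifts}, with the grading formula falling out of the $\h_M(\alpha)=\h_{A_i}(\alpha)+\h_\C(i)$ rule. The paper's proof is terser (it records only the commuting-square diagram and defers the bookkeeping to the reader), but the underlying argument is identical to yours.
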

\begin{proof}
If we use Equation \ref{eqn:Multicone notation} to write the known complex $\C^*$ as a multicone, this is Proposition \ref{prop:gen multicone equiv} applied to the current situation since the isotopies $\rho_i$ produce chain homotopy equivalences of shifted renormalized Khovanov complexes.  The diagram to have in mind is, for all $\delta_i,\delta_j\in\C$,

\ConeConcatCDprep
\[
\begin{tikzcd}
\usebox{\boxA}   \arrow[dd, "{\rho_i^*}"]  \arrow[rr, "{ \usebox{\boxAB} }"]  & &  \usebox{\boxB} \arrow[dd, "{\rho_j^*}"]\\
\\
\usebox{\boxC}   \arrow[rr, "{ (\phi_{i,j}')^* }"] & & \usebox{\boxD}
\end{tikzcd}.
\]
\end{proof}

On the other hand, if we need to know something about the maps $(\phi_{i,j}')^*$, we will need a version of Proposition \ref{prop:multicone VSDR equiv}, and so we will need our maps to be very strong deformation retracts.

\begin{lemma}\label{lem:R1 and R2 are VSDR}
Suppose $\rho:\mathcal{Z}\rightarrow\mathcal{Z}'$ is a Reidemeister 1 or 2 move that eliminates crossings.  Then the induced map $\rho^*$ on the Khovanov complexes is a very strong deformation retract.
\end{lemma}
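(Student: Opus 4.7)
The plan is to produce the auxiliary data $(\iota^{-1}, H)$ for each Reidemeister 1 or 2 move explicitly, via a Gaussian-elimination cancellation of a contractible direct summand inside the Khovanov complex, and then verify that the side conditions of Definition \ref{def:VSDR} follow automatically from the structure of that cancellation.

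First I would set up a general Gaussian-elimination template. Suppose a chain complex $(C^*,d)$ has consecutive pieces written as $C^a = X \oplus V$ and $C^{a+1} = X \oplus W$, with the $X \to X$ component of $d$ equal to an isomorphism $\psi$ (in the additive category of Temperley--Lieb diagrams and dotted cobordisms). Let $\beta : V \to X$ and $\gamma : X \to W$ denote the other components of $d|_{C^a}$. Then the reduced complex $(C')^*$ keeps the terms of $C^*$ untouched except that $C^a, C^{a+1}$ are replaced by $V, W$, with modified differential $\delta - \gamma \psi^{-1} \beta$. One obtains chain maps
\[
\iota : C^* \to (C')^*, \qquad \iota^{-1} : (C')^* \to C^*,
\]
where $\iota$ projects away from both $X$-summands and $\iota^{-1}$ includes $V, W$ into $V, W$ (together with a correction $-\psi^{-1}\beta : V \to X$ in degree $a$ to ensure that $\iota^{-1}$ is a chain map), and a homotopy $H : C^* \to \h^{-1} C^*$ which is zero except that it sends the $X$-summand of $C^{a+1}$ to the $X$-summand of $C^a$ via $-\psi^{-1}$. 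Then $HH=0$ holds because the image of $H$ lies in the $X$-summand of $C^a$, where $H$ itself vanishes; $\iota H = 0$ because $\iota$ annihilates both $X$-summands; and $H \iota^{-1} = 0$ because the image of $\iota^{-1}$ lies entirely in $V, W$ (and the other untouched summands), avoiding the one $X$-summand where $H$ is nonzero.

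Second, I would apply this template to each of R1 and R2. For R1 the complex of a kinked strand has two terms, a strand-with-a-separated-circle and a plain strand, joined by a saddle. Using the standard delooping of the separated circle into a direct sum of two grading-shifted empty-diagram copies (realized by cap/cup cobordisms decorated with a single dot), one component of the saddle becomes a literal identity cobordism, so the template applies with $\psi = \mathrm{id}$. For R2 the cube of four resolutions contains one saddle edge between two of the four vertices whose composition with the reverse saddle is planar-isotopic to the identity two-strand diagram; this saddle is therefore an isomorphism in Bar-Natan's category, and the template applies once more with this saddle as $\psi$. In each case the projection $\iota$ produced by Gaussian elimination is, after matching up diagrams and grading shifts via Lemma \ref{lem:Reid grading shifts}, precisely the Reidemeister chain map $\rho^*$.

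The main obstacle I anticipate is bookkeeping across the several sub-cases (positive and negative R1, and the various over/under and orientation configurations of R2), together with ensuring that in each case the cancellation genuinely comes from a chain-level isomorphism in Bar-Natan's dotted-cobordism category rather than merely a chain homotopy equivalence, since otherwise the template does not produce strict equalities $HH = \iota H = H \iota^{-1} = 0$. Once the delooping presentation for R1 and the distinguished invertible saddle for R2 are fixed, each sub-case is an instance of the same template and the side conditions drop out by inspection of which summand supports each of $\iota, \iota^{-1}, H$.
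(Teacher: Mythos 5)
Your general template and the R1 application are correct, and they give a more systematic account than the paper's one-line proof, which simply defers to the explicit maps in Section 4 of \cite{BN} and to Bar-Natan's remark there that R2 maps are strong deformation retracts. The R2 step, however, has a genuine gap: you assert that a saddle between two vertices of the R2 cube is an isomorphism in Bar-Natan's category because its composition with the reverse saddle is isotopic to the identity cobordism. This is false. The composite saddle-then-reverse-saddle is a cobordism carrying a handle, and by the neck-cutting relation it equals a sum of two \emph{dotted} identity cobordisms rather than the undotted identity; and in any case two adjacent vertices of the R2 cube are distinct Temperley--Lieb diagrams, so the saddle is not even an endomorphism. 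Your template therefore does not apply as stated.

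The fix is the same maneuver you already use for R1. One of the two middle resolutions of the R2 cube contains a disjoint circle; deloop it into a $q$-shifted direct sum of two copies of the circle-free diagram. Then one component of the incoming saddle becomes a literal $\pm$identity from the all-zero resolution, and one component of the outgoing saddle becomes a literal $\pm$identity onto the all-one resolution. Two Gaussian eliminations---one in degrees $0,1$ and one in degrees $1,2$---collapse the complex to the single surviving resolution, and each is a very strong deformation retract by your template. You then need the additional (easy) fact that a composite of very strong deformation retracts is again one, with composite homotopy $H = H_1 + \iota_1^{-1} H_2 \iota_1$; the three side conditions follow by a short computation from the side conditions of the pieces together with $\iota_1 \iota_1^{-1} = I$. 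This should be stated explicitly, since R2 is not reduced by a single elimination. Finally, a minor imprecision in the template itself: $\iota$ is not literally the projection in degree $a+1$, since it needs a correction $-\gamma\psi^{-1}\colon X\to W$ to be a chain map; this does not affect your side-condition checks, however, since $H$ lands in degree $a$ where $\iota$ really is the projection onto $V$.
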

\begin{proof}
This is easy to check using the definitions of the maps (given in Section 4 of \cite{BN}) and the fact that closed spheres evaluate to zero.  Indeed, in that paper the fact that Reidemeister 2 moves are strong deformation retracts is remarked upon and utilized to prove invariance under Reidemeister 3 moves.
\end{proof}

Although one could write down a general translation of Proposition \ref{prop:multicone VSDR equiv} for various situations, we will only need the following corollary that describes the main case of interest in this paper.

\begin{corollary}\label{cor:Consistent Cone of Concats}
Let $\mathcal{X}$ be some tangle.  Suppose $\C^*$ is a complex of dotted cobordisms $\phi_{i,j}$ between Temperley-Lieb diagrams $\delta_i,\delta_j$ where each $\delta_i$ can be concatenated with $\mathcal{X}$.  Suppose for each $\delta_i\in\C^*$ we have a tangle isotopy
\[\rho_i:\braidAA[.15cm]{\mathcal{X}}{\delta_i} \xrightarrow{\cong} \braidA[.15cm]{\delta_i}\]
consisting entirely of Reidemeister 1 and 2 moves that remove crossings, satisfying the following conditions.
\begin{itemize}
\item The grading shifts $a_i,b_i$ for $\rho_i^*$ (via applying Lemma \ref{lem:Reid grading shifts}) are equivalent for all $i$ (that is, there exist constants $a,b$ independent of $i$ such that $a_i=a,b_i=b$ for all $i$).
\item For each dotted cobordism $\phi_{i,j}:\delta_i \rightarrow \delta_j$, the compositions $\rho_j\circ(I_\mathcal{X} \cdot \phi_{i,j})$ and $\phi_{i,j}\circ\rho_i$ are isotopic as dotted tangle cobordisms in $B^3\times[0,1]$:
\ConeConcatCobsCDprep
\[
\begin{tikzcd}
\usebox{\boxA}   \arrow[dd, "{\rho_i}"]  \arrow[rr, "{ \usebox{\boxAB} }"]  & &  \usebox{\boxB} \arrow[dd, "{\rho_j}"]\\
 & \cong & \\
\usebox{\boxC}   \arrow[rr, "{ \usebox{\boxCD} }"] & & \usebox{\boxD}
\end{tikzcd}.
\]
\end{itemize}
Then we have 
\[
\begin{split}
M^*:=\mCone{\delta_i}{\delta_j}{\C^*} {\braidAA[.15cm]{\mathcal{X}}{\delta_i}} {\left(\braidAAmap[.15cm]{I_{\mathcal{X}}}{\phi_{i,j}}\right)} {\braidAA[.15cm]{\mathcal{X}}{\delta_j}} \\ 
 \simeq \mConehqs{\delta_i}{\delta_j}{\C}{\delta_i}{\pm\phi_{i,j}}{\delta_j}{a}{b}{a}{b}.
\end{split}
\]
If in addition the Reidemeister maps $\rho_i^*$ can be given signs in a consistent manner to cancel with the $\pm$ signs above, we actually have
\[M^* \simeq \h^a\q^b \C^*.\]
\end{corollary}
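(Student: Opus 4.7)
The plan is to combine the hypothesis that each $\rho_i^*$ is a very strong deformation retract (guaranteed by Lemma \ref{lem:R1 and R2 are VSDR}, since $\rho_i$ consists only of Reidemeister 1 and 2 moves that eliminate crossings) with the explicit formula for the new multicone maps coming from Proposition \ref{prop:multicone VSDR equiv}. Because the grading shifts in the target multicone from Corollary \ref{cor:Cone of Concats} are all equal to $(a,b)$ by hypothesis, the global $\h^a\q^b$ shift factors out cleanly, and I may focus entirely on identifying which cobordism maps appear in the collapsed multicone.

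By Proposition \ref{prop:multicone VSDR equiv} applied to the $\iota_i:=\rho_i^*$ with accompanying $\iota_i^{-1}$ and $H_i$, the structural map $(\phi'_{i,\ell})^*$ from $\KC^*(\delta_i)$ to $\KC^*(\delta_\ell)$ expands as
\[
(\phi'_{i,\ell})^* = \sum_{(j_1,\dots,j_k)} \iota_\ell\,(I_\mathcal{X}{\cdot}\phi_{j_k\ell})\,H_{j_k}\,(I_\mathcal{X}{\cdot}\phi_{j_{k-1}j_k})\cdots H_{j_1}\,(I_\mathcal{X}{\cdot}\phi_{ij_1})\,\iota_i^{-1}.
\]
I would show that every term with $k\geq 1$ vanishes. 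The commuting-square hypothesis, read as an equality of chain maps $\iota_{j_1}\circ(I_\mathcal{X}{\cdot}\phi_{i,j_1}) = \pm\,\phi_{i,j_1}^*\circ\iota_i$ (isotopic dotted cobordisms in $B^3\times[0,1]$ induce equal maps on $\KC^*$ up to the sign conventions on R1/R2), can be rearranged to $(I_\mathcal{X}{\cdot}\phi_{i,j_1})\iota_i^{-1} = \pm\,\iota_{j_1}^{-1}\phi_{i,j_1}^*$. The next factor of $H_{j_1}\iota_{j_1}^{-1}$ is then zero by the side condition $H\iota^{-1}=0$ of Definition \ref{def:VSDR}\eqref{it:side cond}, killing the entire summand.

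What remains is the $k=0$ term $\iota_\ell(I_\mathcal{X}{\cdot}\phi_{i,\ell})\iota_i^{-1}$. Applying the same commuting identity once more and then using $\iota_\ell\iota_\ell^{-1}=I$ yields $\pm\phi_{i,\ell}^*$. Since $\C^*$ was assumed to have $\phi_{i,j}=0$ whenever $\h_\C(j)-\h_\C(i)\geq 2$, the only surviving maps in the collapsed multicone occur for adjacent $i,\ell$, matching the claimed form of $M^*$. If the signs accrued from the R1/R2 moves can be chosen coherently (e.g.\ by a uniform reorientation of each $\rho_i$ and an overall sign normalization on each $\KC^*(\delta_i)$), these signs can be absorbed so that the remaining map is exactly $\phi_{i,\ell}^*$, which collapses the multicone to $\h^a\q^b\,\C^*$.

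The main obstacle is the sign bookkeeping: showing that the $\pm$ produced by each application of the commuting-square identity and by the Bar-Natan maps for R1/R2 depends on $i,\ell$ only in a coboundary-like way, so that it can be absorbed into a global change of basis compatible with the differential $d^2=0$ of the multicone. Verifying the commutative-square translation from dotted-cobordism isotopy to an actual equality of chain maps — rather than a chain homotopy — is what makes the side-condition argument kill the higher terms outright instead of only up to further corrections, and is the step where the strength of the VSDR hypothesis is essential.
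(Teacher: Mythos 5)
Your setup — invoking Lemma~\ref{lem:R1 and R2 are VSDR} to make each $\rho_i^*$ a very strong deformation retract, then running Proposition~\ref{prop:multicone VSDR equiv} to get the explicit formula for the new multicone maps, and your identification of the $k=0$ summand $\iota_\ell(I_{\mathcal X}\cdot\phi_{i,\ell})\iota_i^{-1}$ with $\pm\phi_{i,\ell}^*$ — is the paper's approach. But the step where you claim the $k\geq 1$ summands vanish has a genuine gap.

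First, the rearrangement is not valid: from $\iota_{j_1}(I_{\mathcal X}\cdot\phi_{i,j_1})=\pm\phi_{i,j_1}^*\iota_i$ you cannot deduce $(I_{\mathcal X}\cdot\phi_{i,j_1})\iota_i^{-1}=\pm\iota_{j_1}^{-1}\phi_{i,j_1}^*$. The map $\iota_{j_1}$ is a retraction, not an isomorphism, and $\iota_{j_1}^{-1}\iota_{j_1}=I+dH_{j_1}+H_{j_1}d$, so ``multiplying on the left by $\iota_{j_1}^{-1}$'' leaves behind $dH+Hd$ correction terms that your argument silently drops; these are precisely the terms that keep $H_{j_1}$ from meeting $\iota_{j_1}^{-1}$ directly, so the side condition $H\iota^{-1}=0$ never gets triggered. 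Second, the parenthetical claim that isotopic dotted cobordisms induce \emph{equal} maps on $\KC^*$ is too strong: Proposition~\ref{prop:dotted functoriality} gives only chain-homotopic maps, and here the source $\KC^*(\mathcal X\cdot\delta_i)$ is \emph{not} a single-term complex, so the homotopy need not vanish. (It does vanish after you compose with $\iota_i^{-1}$, which is why your identification of the $k=0$ term still goes through — but that is a separate computation.)

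The correct mechanism for killing the higher terms is a degree count, and it is where the single-term hypothesis is actually used. After simplification, the new multicone has terms $\h^a\q^b\KC^*(\delta_i)$, all concentrated in the same homological degree $a$. A structural map $f'_{i\ell}$ in the multicone is required to be degree-preserving as a map $\h^a\q^b\KC^*(\delta_i)\to\h^{\h_{\C}(\ell)-\h_{\C}(i)-1}\bigl(\h^a\q^b\KC^*(\delta_\ell)\bigr)$. When $\h_{\C}(\ell)-\h_{\C}(i)\geq 2$ the target carries a nonzero homological shift and has nothing in degree $a$, so $f'_{i\ell}=0$. (Equivalently, each summand with $k\geq 1$ intermediates has overall homological degree $-k<0$ as a map between complexes concentrated in a single degree, and hence is zero.) For adjacent $i,\ell$ the intermediate indices would have to strictly increase $\h_{\C}$-degree, so there simply are no $k\geq 1$ terms, and the lone $k=0$ term is the one you identify via functoriality. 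That degree argument, not the side conditions, is the piece your proof is missing.
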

\begin{proof}
Lemma \ref{lem:R1 and R2 are VSDR} tells us that all of our chain maps $\rho_i^*$ are very strong deformation retracts, so that Proposition \ref{prop:multicone VSDR equiv} ensures that the maps $(\phi_{i,j}')^*$ in the new multicone are originally defined as compositions
\[(\phi_{i,j}')^* := \rho_j^*( I_\mathcal{X} \cdot \phi_{i,j} )^*(\rho_i^{-1})^*.\]
Maps involving larger compositions do not exist because the complexes $\KChqs{a}{b}^*(\delta_i)$ are all \emph{single term} complexes, whereas the larger composition maps act as homotopies that need to reach lower homological gradings.  Then the assumptions imply we have an isotopy of dotted cobordisms
\[\rho_j\circ( I_\mathcal{X} \cdot \phi_{i,j} )\circ\rho_i^{-1} \cong \phi_{i,j}\]
at which point we invoke the functoriality of Bar-Natan's constructions to conclude that we have a homotopy
\[(\phi_{i,j}')^* \sim \pm\phi_{i,j}^*.\]
Once again, since we are dealing with single term complexes, the homotopy is irrelevant and these maps must in fact be equal.  Since $(-1)$ is also a very strong deformation retract, we have the option to include signs with our maps $\rho_i^*$ if necessary to cancel the $\pm$ signs, assuming a consistent choice of such signs can be found.  The proof is outlined by the commuting diagram of Figure \ref{fig:Consistent Cone of Concats proof}.
\end{proof}

In general, the majority of the theorems in this section are proved in a similar fashion.  We take a given tangle $\mathcal{Z}$ and view it as the concatenation of two tangles $\mathcal{Z}=\mathcal{X} \cdot \mathcal{Y}$ to analyze its renormalized Khovanov complex
\[\KC^*(\mathcal{Z})=\KC^*\left( \braidAA[.15cm]{\mathcal{X}}{\mathcal{Y}} \right) = \braidAA{\KC^*(\mathcal{X})}{\KC^*(\mathcal{Y})}.\]
We then look to apply either Corollary \ref{cor:Cone of Concats} or \ref{cor:Consistent Cone of Concats} by treating the complex $\KC^*(\mathcal{Y})$ (or perhaps some simplification of it) as the required $\C^*$ and finding tangle isotopies $\rho_i:\mathcal{X}\cdot\delta_i \xrightarrow{\cong} \mathcal{X}'_i$ for each $\delta_i\in\C^*$.  Under these conditions, Equation \ref{eqn:Cone of Concats} becomes
\begin{equation}\label{eqn:Cone of Concats std}
\KC^*(\mathcal{Z}) \simeq \mConeFullhqs{\delta_i}{\delta_j}{\C^*}{\mathcal{X}'_i}{(\phi'_{i,j})}{\mathcal{X}'_j}{a_i}{b_i}{a_j}{b_j}.
\end{equation}
In some cases we will only be interested in the homological degrees of various diagrams in $\KC^*(\mathcal{Z})$ so that the maps $(\phi'_{i,j})^*$ will be irrelevant and we can use the relatively simple Corollary \ref{cor:Cone of Concats}.  In other cases we will actually want to claim that $\KC^*(\mathcal{Z})$ is chain homotopy equivalent to a shifted copy of $\KC^*(\mathcal{Y})$, which will require checking all of the various requirements of Corollary \ref{cor:Consistent Cone of Concats}.

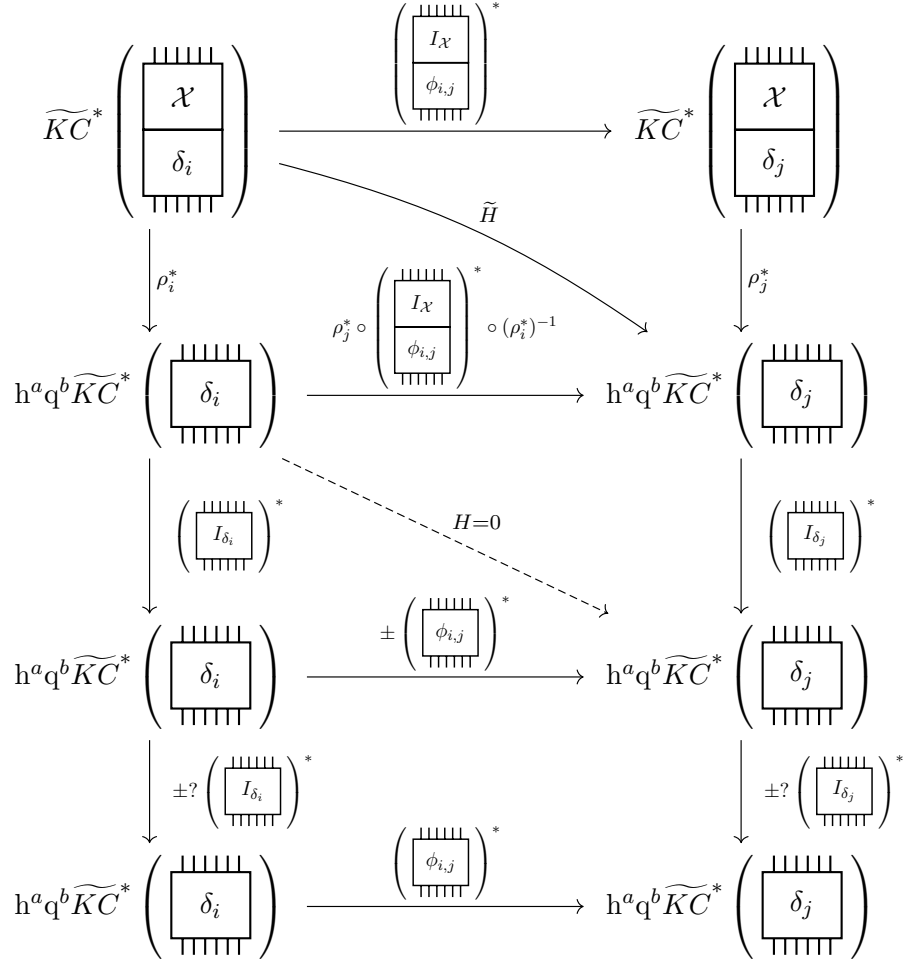
\begin{figure}

\ConsistentConeConcatCDprep
\centering
\begin{tikzcd}
\usebox{\boxA} \arrow[rrrr, "{\usebox{\boxAB}}"] \arrow[dd,"{\rho_i^*}"] \arrow[rrrrdd,bend left=10,"\widetilde{H}"] & & & & \usebox{\boxB} \arrow[dd,"{\rho_j^*}"] \\
\\
\usebox{\boxC} \arrow[rrrr, "{\usebox{\boxCD}}"] \arrow[ddd,"{\usebox{\boxCE}}"] \arrow[dddrrrr, dashed, "{H=0}"] & & & & \usebox{\boxD} \arrow[ddd,"{\usebox{\boxDF}}"] \\
\\
\\
\usebox{\boxE} \arrow[rrrr, "{\usebox{\boxEF}}"] \arrow[dd,"{\usebox{\boxEG}}"] & & & & \usebox{\boxF} \arrow[dd, "{\usebox{\boxFH}}"] \\
\\
\usebox{\boxG} \arrow[rrrr, "{\usebox{\boxGH}}"] & & & & \usebox{\boxH}
\end{tikzcd}
\caption{A diagram for Corollary \ref{cor:Consistent Cone of Concats}.  We envision a large system of these downward towers of maps, one for each pair of $\delta_i,\delta_j\in\C^*$ with $\h_{\C^*}(\delta_j)-\h_{\C^*}(\delta_i)=1$.  Because the terms $\KC^*(\delta_i)$ are one-term complexes, there is no need to account for higher degree maps.  Similarly, although Bar-Natan's functoriality only guarantees that the top two squares commute up to some homotopies $\widetilde{H}$ and $H$, the fact that our mid-layer complexes have only one term ensures that the central $H$ is zero.  The multicone of the top layer forms $\KC^*(\mathcal{Z})$, while the bottom layer forms a shifted copy of $\C^*$.  If the various $\rho$ maps involve only Reidemeister 1 and 2 simplifications (so the $\rho^*$ maps are very strong deformation retracts), we can build an equivalence from the first layer down to the second layer.  Since $H=0$, we can continue to the third layer; however a further argument is needed to find a consistent choice of signs to reach the fourth and final layer.}
\label{fig:Consistent Cone of Concats proof}
\end{figure}

\begin{remark}\label{rmk:VSDR required}
In \cite{Roz}, Rozansky does not make mention of the use of very strong deformation retracts, and instead collapses the tower of Figure \ref{fig:Consistent Cone of Concats proof} into two layers - the top and bottom only.  In doing so, it is true that the resulting square can be arranged to commute up to homotopy, but this is not enough to guarantee the existence of a map from the top multicone to the bottom multicone.  In addition, one needs these diagonal homotopies to commute with other `horizontal' maps within the multicones up to higher homotopies, which then need to commute with further horizontal maps up to even higher homotopies, and so forth.  Even then, it is not immediately clear what the homotopy inverse map should be.  The multi-layer approach of Figure \ref{fig:Consistent Cone of Concats proof} avoids this problem by using very strong deformation retracts to build the equivalence between the first two layers (put another way, the top square commutes up to a very simplistic homotopy $\widetilde{H}$ which we can incorporate into both the vertical map and its inverse), while the lower layers all have one-term complexes where commuting up to homotopy is the same is commuting on the nose and the equivalence is clear.
\end{remark}

\subsection{A remark on dotted cobordisms}\label{sec:Moving Dots}
In \cite{BN} Bar-Natan asserts that the arguments for functoriality concerning undotted tangle cobordisms continue to hold for dotted tangle cobordisms.  In rings where $2$ is invertible, a dotted cobordism can be converted to an undotted cobordism with a coefficient of $2^{-1}$ and so the claim is obvious.  Otherwise however, it is perhaps less clear to see precisely why this is the case.  Here we present a quick argument to show why two isotopic dotted tangle cobordisms do indeed induce homotopic maps on Khovanov complexes, up to a sign.  We begin with the following lemma which is well-known to experts.

\begin{lemma}\label{lem:Moving Dots}
The dotted identity cobordisms indicated below induce homotopic maps on Khovanov complexes up to a sign:
\begin{gather}\label{eq:Moving Dots}
\left(\crossingdotTL\right)^* \sim -\left(\crossingdotBR\right)^*\\
\left(\crossingdotTR\right)^* \sim -\left(\crossingdotBL\right)^*.
\end{gather}
\end{lemma}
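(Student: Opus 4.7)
The plan is to exhibit an explicit null-homotopy for the sum $(\crossingdotTL)^* + (\crossingdotBR)^*$, from which the claimed sign relation follows at once; the second equation is handled by the identical argument.

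First I would unpack the chain map that each dotted identity cobordism induces on the two-term complex
\[\KC^*(\crossing) = \bigl(\vres \xrightarrow{s} \q\hres\bigr),\]
where $s$ is the saddle. Because dots may be slid freely along connected components of a cobordism, it is only the connected component each endpoint lies on after resolution that matters. On $\vres$, endpoints TL and BL both sit on the left vertical strand while TR and BR sit on the right; on $\hres$, endpoints TL and TR both sit on the top arc while BL and BR sit on the bottom. Hence the TL-dotted identity acts on $\vres$ by a dot on the left strand and on $\hres$ by a dot on the top arc, while the BR-dotted identity acts on $\vres$ by a dot on the right strand and on $\hres$ by a dot on the bottom arc. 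The sum therefore has $\vres$-component $(\text{dot L}) + (\text{dot R})$ and $\hres$-component $(\text{dot top}) + (\text{dot bot})$.

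The central ingredient is neck-cutting in Bar-Natan's category, applied to the reverse saddle $s':\hres\to\vres$. The compositions $s'\circ s$ and $s\circ s'$ are tubes with a handle attached, which neck-cutting simplifies to
\[s'\circ s \;=\; (\text{dot L}) + (\text{dot R}), \qquad s\circ s' \;=\; (\text{dot top}) + (\text{dot bot}).\]
Taking $h := s'$ as a homotopy of homological degree $-1$ (and zero elsewhere), the map $dh + hd$ has $\vres$-component $s'\circ s$ and $\hres$-component $s\circ s'$, which by the identities above agrees exactly with the chain map computed in the previous paragraph. Thus $(\crossingdotTL)^* + (\crossingdotBR)^* \simeq 0$, yielding the first relation. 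The very same homotopy $h = s'$ proves the second: TR sits on the right strand of $\vres$ and the top arc of $\hres$, while BL sits on the left strand of $\vres$ and the bottom arc of $\hres$, so the sum-components are once again $(\text{dot L}) + (\text{dot R})$ and $(\text{dot top}) + (\text{dot bot})$.

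The main obstacle is bookkeeping rather than ideas. One must verify the neck-cutting identification of $s\circ s'$ and $s'\circ s$ with the stated sums of dotted cylinders (using Bar-Natan's sphere-is-zero and genus-one relations), confirm that the free dot-sliding argument for identifying each dotted identity with its ``sum of strand-dots'' form is rigorous, and check that the argument is uniform in the sign of the crossing: for a negative crossing the roles of $\vres$ and $\hres$ as $0$- and $1$-resolutions interchange, but the endpoint-to-strand assignments are unchanged and the homotopy still goes from the $1$-resolution back to the $0$-resolution.
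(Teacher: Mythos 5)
Your proposal is correct and is essentially the paper's proof written out in prose: the paper exhibits the same diagram in which the reverse saddle $s'$ serves as the null-homotopy between $(\crossingdotTL)^*$ and $-(\crossingdotBR)^*$, with neck-cutting applied to the composition of the two saddles to identify $dh+hd$ with the sum of dotted identities. Your explicit component-by-component analysis of which strand each dot lands on under the $0$- and $1$-resolutions, and the observation that the same homotopy handles the second equation, fills in precisely what the paper leaves to the figure.
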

\begin{proof}
The homotopy is illustrated in the following diagram.
\MovingDotCDprep
\[
\begin{tikzcd}
\usebox{\boxA}  \arrow[rrrrr,bend left=15, "{ \usebox{\boxAB} }"]  \arrow[rrrrr, bend right=15, "{\usebox{\boxABa}}"]  \arrow[dddd, "s"]  & & & & &  \usebox{\boxB}  \arrow[dddd, "s"]\\
\\
\\
\\
\usebox{\boxC}  \arrow[rrrrr,bend left=15, "{\usebox{\boxCD}}"]  \arrow[rrrrr,bend right=15, "{\usebox{\boxCDa}}"]  \arrow[uuuurrrrr, dashed, "s"] & & & & &  \usebox{\boxD}
\end{tikzcd}
\]
The diagram shows the complex $\KC^*\left(\crossing\right)$ on the left and right, with the two chain maps $\left(\crossingdotTL\right)^*$ and $-\left(\crossingdotBR\right)^*$ drawn horizontally.  The homotopy is drawn as a dashed line.  The symbol $s$ stands for the obvious saddle map, so that composing $s\circ s$ via the homotopy creates a tube that can be cut using the neck-cutting relation of \cite{BN}.  The other homotopy is identical.
\end{proof}

\begin{proposition}\label{prop:dotted functoriality}
If $\phi_1$ and $\phi_2$ are two isotopic tangle cobordisms from tangle $Z_1$ to $Z_2$ in $B^3$, then their induced chain maps $KC^*(Z_1)\rightarrow KC^*(Z_2)$ are homotopic up to a sign.
\end{proposition}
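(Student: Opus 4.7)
The plan is to leverage Bar-Natan's already-established functoriality for undotted tangle cobordisms and reduce everything new to the local analysis of Lemma \ref{lem:Moving Dots}. First I would recall that, by the arguments of Section 8 of \cite{BN}, any isotopy between two undotted tangle cobordisms in $B^3\times[0,1]$ can be broken up into a finite sequence of the standard movie moves, each of which induces homotopic chain maps up to an overall sign. The task is thus to show that the presence of dots does not spoil this, and that isotopies which move dots (while leaving the underlying surface fixed) also produce homotopic maps up to sign.

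After putting the isotopy from $\phi_1$ to $\phi_2$ into general position, I would decompose it into two kinds of elementary modifications: (a) a movie move applied to the underlying undotted cobordism, with each dot pinned to a small disk on a local sheet that is unaffected by the move, and (b) a motion that fixes the underlying undotted cobordism and slides one dot a short distance along the surface. Modifications of type (a) are handled by Bar-Natan's undotted argument verbatim, since the chain map computed at the movie-move region is independent of whether a passive dot sits on a disjoint sheet; after a small perturbation we may assume no dot lies in the region where the move acts.

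For modifications of type (b), I would subdivide based on what feature, if any, the dot crosses during the slide: (b1) the dot stays within the smooth interior of a single sheet, away from crossings and Morse critical points; (b2) the dot crosses a Morse critical point of the cobordism; or (b3) the dot passes through a crossing region of the underlying tangle. Case (b1) produces literally the same elementary cobordism generator in each movie frame, hence an equal chain map. Case (b2) is handled by the dot-migration and neck-cutting relations of \cite{BN}, which imply that pushing a dot across a minimum or maximum gives the same dotted cup or cap, while pushing a dot along a tube is governed by neck-cutting. Case (b3) is exactly the content of Lemma \ref{lem:Moving Dots}, which produces a chain homotopy with overall sign $-1$. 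Collecting the finitely many resulting signs gives a single overall $\pm$ and a chain homotopy between $\phi_1^*$ and $\pm\phi_2^*$.

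The main obstacle I expect is bookkeeping: one has to check that a generic isotopy can indeed be decomposed into the listed elementary moves after a suitable perturbation, and that the sign contributions aggregate to a single overall sign. Since the statement only claims homotopy up to an overall sign, the delicate coherence problem of pinning down one specific sign does not arise, and any inconsistency collapses into the one ambiguous $\pm$. The genuinely new content beyond \cite{BN} is therefore contained entirely in Lemma \ref{lem:Moving Dots}, which is applied locally at each event in which a dot crosses a tangle crossing, with all other steps being direct extensions of Bar-Natan's original undotted functoriality argument.
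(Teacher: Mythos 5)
Your proposal follows essentially the same route as the paper: both reduce the new content to Lemma \ref{lem:Moving Dots} for the single nontrivial local event (a dot passing a crossing in the planar projection), and both lean on Bar-Natan's undotted functoriality via Carter--Saito movie moves for everything else. The one place where you diverge is in how you handle a dot that would otherwise interact with a Morse critical point or a movie-move region. You propose to analyze case (b2) head on, appealing somewhat loosely to ``dot-migration and neck-cutting'' for a dot sliding through a saddle; the paper avoids this case altogether, observing that one can first push the dot in the $B^3$ (spatial) direction --- using Lemma \ref{lem:Moving Dots} whenever a crossing is encountered --- until it is disjoint from the local region affected by the elementary move, after which swapping the dotted identity with the Morse/Reidemeister move is trivial. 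The paper also supplies the justification you elide in case (a): the reason a dot can always be slid out of a movie-move cube is that none of the Carter--Saito movie moves involves a closed surface component, so the dotted component always reaches a frame at the boundary of the cube. Your direct argument for (b2) is plausible (the chain map factors through $Cob^3_\bullet$, where dots move freely on a connected component), but the paper's ``move the dot away first'' device is cleaner and needs no further appeal to neck-cutting; you may want to adopt it, or at least tighten the (b2) justification along the lines above.
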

\begin{proof}
An (un-dotted) tangle cobordism in $B^3\times [0,1]$ can be viewed as a `movie' in $B^3$, where $[0,1]$ is tracking the time coordinate of the movie.  Recall that any such movie can be decomposed into a sequence of elementary movies corresponding to Morse moves (saddle, birth, and death) and Reidemeister moves.  If such cobordisms are isotopic, then their movies are linked via some sequence of so-called `movie moves' (see Carter and Saito \cite{CaSaito}), all of which are shown to produce homotopies between the corresponding cobordism maps up to a sign in \cite{BN}.

For dotted tangle cobordisms, we must add one extra type of elementary movie: a single dotted identity cobordism.  Then if two dotted tangle cobordisms are isotopic, their movies are linked by a sequence of movie moves as in \cite{CaSaito} together with two new moves that correspond to sliding a dot along the cobordism in either the $B^3$ direction or the $[0,1]$ direction.  Sliding a dot in the $B^3$ direction clearly maintains the corresponding chain map, except possibly in the case where we move a dot past a crossing in our planar projection.  This is precisely the case covered by Lemma \ref{lem:Moving Dots}.

Meanwhile, sliding a dot in the $[0,1]$ direction gives rise to moves that swap the ordering of some elementary movie $m$ with a dotted identity $I_\bullet$.  With the help of Lemma \ref{lem:Moving Dots}, we can begin any such move by pushing the dot along the tangle in the $B^3$ direction until it is far from the portion of the tangle that is affected by $m$ (all elementary movies are local in nature).  Then the two orderings of movies $I_\bullet \circ m$ and $m \circ I_\bullet$ trivially must give the same cobordism map.

Finally, to ensure that these are all of the moves needed, we note that none of the movie moves of Cater-Saito involve a closed component of a cobordism, and thus there is never a need to consider a dot within one of their movie moves.  The dot can always be slid to the beginning frame or ending frame (depending on which boundary the dotted component reaches) before performing the movie move.
\end{proof}


Lemma \ref{lem:Moving Dots} also leads to chain homotopy equivalences for standard mapping cones:
\begin{gather}\label{eq:Moving Dots in Cones}
\cone\left( \KC^*\left(\crossing\right) \xrightarrow{\crossingdotTL} \KC^*\left( \crossing \right) \right)
\simeq
\cone\left( \KC^*\left(\crossing\right) \xrightarrow{- \crossingdotBR} \KC^*\left( \crossing \right) \right)\\
\cone\left( \KC^*\left(\crossing\right) \xrightarrow{\crossingdotTR} \KC^*\left( \crossing \right) \right)
\simeq
\cone\left( \KC^*\left(\crossing\right) \xrightarrow{- \crossingdotBL} \KC^*\left( \crossing \right) \right).
\end{gather}
We will further analyze how this affects certain specific multicones in Section \ref{sec:knotification Kh}.

With Corollaries \ref{cor:Cone of Concats} and \ref{cor:Consistent Cone of Concats} at the ready, and Proposition \ref{prop:dotted functoriality} allowing us to make use of Bar-Natan's functoriality for our constructions, we can now turn towards proving Theorem \ref{thm:inf twist complex} in earnest.  Although much of the work from this point is equivalent to the similar arguments of Rozansky in \cite{Roz}, we present the proofs for completeness.  The reader who is familiar with Rozansky's work may safely skim the remainder of this section.

\subsection{A simplified complex for $KC^*(\F_n)$}\label{sec:KC(F1)}
The saddle map of Equation \ref{eqn:KC main def} increases homological grading, and so it appears likely that, for a tangle like $\F_n$ made up entirely of right-handed crossings, higher homological gradings will correspond to diagrams with lower through-degrees.  Of course, for any diagram $\delta\in KC^*(\F_n)$, the parity of $\th(\delta)$ must match the parity of $n$.  Compare the following theorem with Theorem 8.3 in \cite{Roz}.

\begin{theorem}\label{thm:single ft cx}
For any $n\geq 1$, $\KC^*(\F_n)$ is chain homotopy equivalent to a complex $C^*(\F_n)$ where, for any $m\in \left\{0,1,\dots,\left\lfloor \frac{n}{2} \right\rfloor \right\}$, diagrams $\delta\in C^*(\F_n)$ satisfy the following properties:
\begin{enumerate}
\item Diagrams $\delta\in C^*(\F_n)$ having through-degree $\th(\delta)=n-2m$ appear only in homological degrees $\h(\delta)\leq 2m(n-m)$.
\item There exists at least one diagram $\delta\in C^*(\F_n)$ with through-degree $\th(\delta)=n-2m$ such that $\h(\delta)=2m(n-m)$.
\item No diagrams in $C^*(\F_n)$ contain disjoint circles.
\end{enumerate}
\end{theorem}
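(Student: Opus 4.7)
I would prove Theorem~\ref{thm:single ft cx} by induction on $n$. The base case $n = 1$ is immediate: $\F_1$ has no crossings, so $\KC^*(\F_1) = C^*(\F_1)$ is the identity diagram sitting alone in homological degree zero. This diagram has through-degree $1 = n - 2m$ at $m = 0$, lying at the bound $0 = 2m(n-m)$, and contains no disjoint circles.

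For the inductive step I would decompose the full twist as $\F_n = \F_{n-1}' \cdot \mathcal{J}_n$, where $\F_{n-1}'$ is the full twist on the first $n-1$ strands (with the last strand straight) and $\mathcal{J}_n$ is the tangle in which the last strand wraps once around the other $n-1$ strands, contributing exactly $2(n-1)$ crossings (so the crossing count works out: $(n-1)(n-2) + 2(n-1) = n(n-1)$). By the inductive hypothesis applied to $\F_{n-1}$, $\KC^*(\F_{n-1}') \simeq C^*(\F_{n-1}')$ satisfies the analogue of (1)--(3) for $n-1$ strands. Applying Corollary~\ref{cor:Cone of Concats} with $\mathcal{X} = \mathcal{J}_n$ and $\C^* = C^*(\F_{n-1}')$ produces a chain homotopy equivalence of $\KC^*(\F_n)$ with the multicone whose summands are $\KC^*(\mathcal{J}_n \cdot \delta_i)$, indexed over diagrams $\delta_i \in C^*(\F_{n-1}')$; the task reduces to simplifying each such summand and reading off the resulting through-degrees and homological degrees.

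The heart of the argument is the local computation of $\KC^*(\mathcal{J}_n \cdot \delta)$ for a fixed Temperley--Lieb diagram $\delta$ on $n-1$ strands of through-degree $n-1-2m'$. Since $\delta$ has no disjoint circles (by the inductive hypothesis), each cup of $\delta$ generates pairs of crossings with the wrapping strand that cancel via Reidemeister~2 moves (whose grading shifts are tracked by Lemma~\ref{lem:Reid grading shifts} and are very strong in the sense of Lemma~\ref{lem:R1 and R2 are VSDR}), reducing to the case of the $n$th strand wrapping only around the $n-1-2m'$ through-strands of $\delta$. The remaining cube of resolutions produces diagrams of two types: those in which the wrapping strand survives as a through-strand (giving total through-degree $n-2m'$), and those in which the wrapping strand is capped $k \geq 1$ times against through-strands of $\delta$ (giving total through-degree $n - 2(m'+k)$). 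Each capping saddle raises homological degree, and the disjoint circles that arise are systematically removed by the delooping isomorphism in Bar-Natan's category. Combining with Equation~\eqref{eqn:HomDeg in Multicone}, which assembles the degree of a diagram $\epsilon \in \KC^*(\mathcal{J}_n \cdot \delta)$ inside the full multicone from $\h_{C^*(\F_{n-1}')}(\delta)$ plus the local degree of $\epsilon$, yields the desired bound $2m(n-m)$ with $m = m' + k$.

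The main obstacle will be verifying the combinatorial identity relating $2m(n-m)$ to the sum $2m'(n-1-m')$ plus the contribution from the $\mathcal{J}_n$ piece, and for property~(2), showing that the extremal diagram obtained by pairing the extremal $\delta$ from the inductive hypothesis with maximal capping in $\mathcal{J}_n$ survives the multicone simplifications uncancelled at exactly homological degree $2m(n-m)$. This requires ruling out cancellations coming from higher maps in the multicone, for which the saddle-raising structure of all differentials plus the extremality of the chosen diagrams should suffice. Property~(3) then follows along the way, since every disjoint circle introduced is explicitly removed by delooping before being recorded in $C^*(\F_n)$.
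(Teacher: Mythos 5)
Your inductive skeleton is the paper's: induct on $n$, peel off a Jucys--Murphy element from the full twist (your $\mathcal{J}_n$ is precisely the paper's $\E_n$, up to reflecting which strand wraps), slide the cups of $\delta$ through the wrapping strand via Reidemeister~2 moves (this is Lemma~\ref{lem:slide thru E}), and assemble homological degrees via Equation~\eqref{eqn:HomDeg in Multicone} as (R2 shift) $+$ (inductive degree of $\delta$) $+$ (local degree inside the reduced Jucys--Murphy complex). Your bookkeeping for property~(1) is sound: the paper uses the crude bound $\h_{\mathrm{local}} \leq 2(n-2m'-1)$ (the total crossing count of the reduced Jucys--Murphy), which is tight exactly when $m = m'+1$, and then invokes monotonicity of $x(n-x)$ to absorb the cases $m > m'+1$.

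Two corrections, one of which matters. For property~(2), the paper does the \emph{opposite} of ``maximal capping'': for each $m$ in the general range it takes $\delta$ extremal at $m'=m$ from the inductive hypothesis and pairs it with the \emph{all-zero} resolution of the reduced Jucys--Murphy (i.e.\ no cappings at all), giving $\h = 2m + 2m(n-1-m) + 0 = 2m(n-m)$ on the nose with no further argument. A ``maximal capping from the extremal $\delta$'' recipe is underspecified: if you mean $\delta$ extremal at $m'=0$ (the identity) with $m$ cappings in $\mathcal{J}_n$, the local degree is capped at $2(n-1)$, which is strictly less than $2m(n-m)$ for $m\geq 2$ and $n$ large, so the target degree is unreachable. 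The only place the paper uses a nontrivial resolution of the Jucys--Murphy piece is the single edge case $m = (n+1)/2$ with $n$ odd (where no $\delta$ at $m'=m$ exists), and there it is one capping with $\E_2$ at its top degree, not a maximal one. Second, ruling out cancellations from higher multicone maps is not needed: $C^*(\F_n)$ is defined as the multicone with delooping applied and nothing else, and delooping is an isomorphism on chain objects that preserves homological degree, so the exhibited diagram is literally present. Relatedly, Lemma~\ref{lem:R1 and R2 are VSDR} is not invoked here --- only Corollary~\ref{cor:Cone of Concats} is used, which requires mere chain homotopy equivalences, not very strong deformation retracts; the VSDR machinery enters later for Lemma~\ref{lem:ft cone preserved}.
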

When $n$ is even, the complex $C^*(\F_n)$ will form the `base case' $\KCsimp(1)$ required by Theorem \ref{thm:inf twist complex} after some further shifts, but we refrain from that notation here since we will be inducting on $n$ (including odd $n$).  The symbol $m$ in Theorem \ref{thm:single ft cx} refers to the number of \emph{matchings} on either the top or bottom of the diagram $\delta$.  See Figure \ref{fig:thrudeg vs matchings}.

\begin{figure}
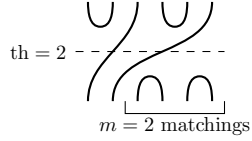

\centering
\thVSm
\caption{For this diagram $\delta$ with $n=6$, $m=2$ refers to the two matchings indicated (and of course, this implies that there are also $m=2$ matchings on the top half of $\delta$).  Thus $\th(\delta)=(6)-2(2) = 2$.}
\label{fig:thrudeg vs matchings}
\end{figure}

Although Theorem \ref{thm:single ft cx} can be strengthened to give relationships between $\th(\delta)$ and $\q(\delta)$ as well, we will not need those relationships here.

Before we can begin the proof, we need the following notations.

\begin{definition}\label{def:EM braid}
For any number of strands $n$, the symbol $\E_n$ will denote the right-handed Jucys-Murphy element of the braid group pictured in Figure \ref{fig:EM braid}.  In terms of the standard braid group generators, $\E_n:=\sigma_1\sigma_2\cdots\sigma_{n-1}\sigma_{n-1}\cdots\sigma_2\sigma_1$.
\end{definition}
Note that $\E_n$ has precisely $2(n-1)$ crossings.

\begin{figure}
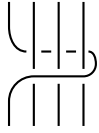

\centering
\Enpic
\caption{The right-handed Jucys-Murphy braid $\E_n$ for $n=4$.}
\label{fig:EM braid}
\end{figure}



\begin{definition}\label{def:delta top bot}
Let $\delta$ be a Temperley-Lieb diagram with through-degree $\th(\delta)=d$.  Any such diagram can be `pulled outward' until it can be written as the vertical concatenation of three diagrams, which we denote as follows:
\begin{equation}\label{eqn:delta top bot}
\delta = \topp{\delta} \cdot \iota_d \cdot \bott{\delta}.
\end{equation}
Here, $\iota_d$ is the vertical identity diagram on the $d$ strands that pass from top to bottom of $\delta$, contributing to the through-degree.  Then $\topp{\delta}$ and $\bott{\delta}$ refer to the upper and lower halves of $\delta$ that contain the upper and lower matchings.  As long as $\delta$ has no disjoint circles, this decomposition is unique.  See Figure \ref{fig:delta top bot} for clarification.
\end{definition}

\begin{figure}
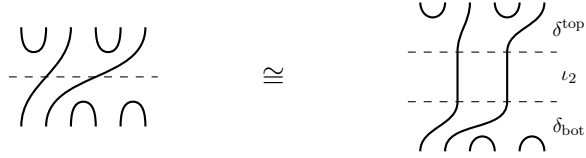

\centering
$\diagdecompA \hspace{.5in}\cong\hspace{.5in} \diagdecompB$
\caption{The diagram $\delta$ of Figure \ref{fig:thrudeg vs matchings} is decomposed as $\topp{\delta} \cdot \iota_d \cdot \bott{\delta}$.  Any Temperley-Lieb diagram can be decomposed in this way.}
\label{fig:delta top bot}
\end{figure}

The most important aspect of these notations is the isotopy described by the following lemma.
\begin{lemma}\label{lem:slide thru E}
For any $n$ and any Temperley-Lieb diagram $\delta$ with $\th(\delta)=d$, we have a braid isotopy
\begin{equation}\label{eqn:slide thru E}
\braidAB{\E_{n+1}}{\delta} \cong \dtEdb{\topp{\delta}}{\E_{d+1}}{\bott{\delta}}.
\end{equation}
that uses only Reidemeister 2 moves which remove crossings.
\end{lemma}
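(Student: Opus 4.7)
The plan is to proceed by induction on $m = (n-d)/2$, the number of matchings in $\bott{\delta}$ (equivalently, in $\topp{\delta}$, by the consistency requirement on TL diagrams). The base case $m = 0$ is immediate: $\delta = \iota_d$, so $\topp{\delta}$ and $\bott{\delta}$ are trivial and the isotopy is the identity (with $\E_{n+1} = \E_{d+1}$). For the inductive step, I will push one matching of $\bott{\delta}$ past $\E_{n+1}$ using two Reidemeister 2 moves, with matchings of $\topp{\delta}$ handled symmetrically.

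For the inductive step, by planar isotopy within $\bott{\delta}$, arrange an innermost matching (one with no other matchings nested beneath it) to occupy adjacent positions $i$ and $i+1$ immediately below $\E_{n+1}$. The matching closes strands $i$ and $i+1$ into a single arc; traversing this arc from one top-endpoint to the other, one leg is oriented downward and the other upward via the cap. Consequently the wrap strand of $\E_{n+1}$ has algebraic crossing number zero with this arc—the four crossings (two outgoing $\sigma$'s and two returning $\sigma$'s) contribute signs $+1, -1, -1, +1$ once the reversed orientation of the second leg is accounted for. This vanishing is exactly the algebraic obstruction to unwinding the wrap from the matched pair, and its absence is what permits the isotopy.

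The cancellation is realized by two Reidemeister 2 moves, each pairing an outgoing crossing on one leg with a returning crossing on the other leg, the two of opposite sign. The bigon underlying each R2 extends from $\E_{n+1}$ downward into $\bott{\delta}$ and uses the cap as one of its sides. Contracting both bigons removes all four crossings, freeing the wrap strand from strands $i$ and $i+1$. What remains is $\E_{n-1}$ acting on the surviving $n-1$ strands, composed with the reduced $\bott{\delta}$ that now has one fewer matching. The inductive hypothesis applied to this reduced data yields the desired form $\topp{\delta} \cdot \E_{d+1} \cdot \bott{\delta}$.

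The main obstacle is exhibiting the R2 bigons explicitly. The four relevant crossings lie in widely separated parts of $\E_{n+1}$, and the bigons are large—they traverse the Jucys-Murphy, descend through $\bott{\delta}$ to the cap, and return. A naive attempt (e.g., trying to cancel $\sigma_n \sigma_n$ on its own) fails, because on the same pair of strands the middle region is parallel rather than bigon-shaped. The key is that the cap supplies the missing side of the bigon by identifying the two matched strands, so the bigon closes through $\bott{\delta}$. Ensuring the bigon interior is empty forces us to choose an innermost matching; this guarantees that no other matchings of $\bott{\delta}$ obstruct the contraction. Once the planar isotopies setting up the bigons are made explicit, the argument reduces to two applications of Reidemeister 2, each removing two crossings, as required.
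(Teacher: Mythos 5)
Your overall strategy---induction on $m$, sliding one innermost matching through $\E_{n+1}$ per step with two Reidemeister-2 moves removing the four relevant crossings---is the same cup-sliding argument the paper sketches via its figure. But the description of the R2 bigons is wrong. In $\E_{n+1}=\sigma_1\cdots\sigma_n\sigma_n\cdots\sigma_1$ every crossing is a positive generator, so the wrap strand passes on the same side of the matched arc at both of its outgoing crossings (say over), and on the other side at both returning crossings (under). Reidemeister 2 cancels a pair of crossings between two arcs where the \emph{same} arc is over at both; an over-crossing paired with an under-crossing is a clasp, not an R2 pair. So the valid pairing is (outgoing on leg $i$, outgoing on leg $i+1$)---both with the wrap strand over, with the cap closing an embedded bigon---and then, after that contraction, (returning on leg $i+1$, returning on leg $i$). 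Your proposal pairs an outgoing crossing on one leg with a returning one on the other, which puts an over-crossing with an under-crossing; moreover the region you describe, running from the outgoing $\sigma_{i-1}$ down one leg through the cap and up the other leg to the returning $\sigma_i$, has the other outgoing crossing in its interior and so is not a bigon at all. The sign count (algebraic crossing number zero) is a genuine necessary condition, but it holds equally for your pairing and the correct one, because the cap reverses the arc's orientation; the signs do not determine which crossings pair off under R2---the over/under pattern and the bigon condition do.

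A smaller bookkeeping note: in the paper's convention the matchings being slid through $\E_{n+1}$ are those of $\topp{\delta}$, the half of $\delta$ adjacent to $\E_{n+1}$ in the concatenation; your write-up refers to $\bott{\delta}$ throughout.
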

\begin{proof}
See Figure \ref{fig:slide thru E}, which illustrates the isotopy.  This is one version of `cup sliding' as it is referred to in \cite{Roz}.  If we always slide one `innermost' cup at a time, only Reidemeister 2 simplifications are used.
\end{proof}

\begin{figure}
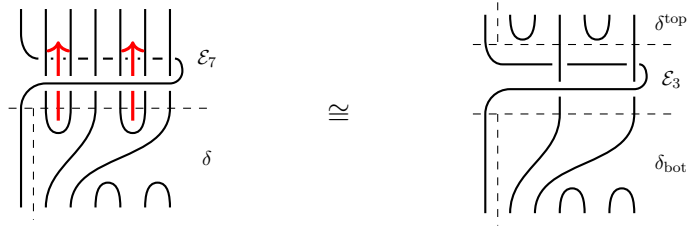

\centering
$\EncupslideA \hspace{.5in}\cong\hspace{.5in} \EncupslideB$
\caption{Illustration of the cup-sliding trick that performs the isotopy described by Equation \ref{eqn:slide thru E}.  The separation of the various braids is indicated by dashed lines.  In this example, $\delta$ is the diagram from Figure \ref{fig:thrudeg vs matchings} so $n=6$ with through-degree $d=2$.}
\label{fig:slide thru E}
\end{figure}

\begin{proof}[Proof of Theorem \ref{thm:single ft cx}]
We induct on $n$.  In the base case when $n=1$, the statement is obvious since the only choice for $m$ is zero, and $\KC^*(\F_1)$ is the one term complex consisting of only the identity diagram $\iota_1$ in homological degree 0.

Now we assume the theorem is true for $\F_n$, and seek to prove the statement for $\F_{n+1}$ using the well-known decomposition for $\F_{n+1}$ as 
\[
\braidA{\F_{n+1}} = \braidAB{\E_{n+1}}{\F_n}.
\]
We follow the strategy of Corollary \ref{cor:Cone of Concats} and Equation \ref{eqn:Cone of Concats std} with $\E_{n+1}$ playing the role of $\mathcal{X}$, while the role of $\C^*$ is played by $C^*(\F_n)$ as provided by the induction hypothesis, together with the single disjoint strand on the left.  Utilizing the notation of Equations \ref{eqn:Multicone notation} and \ref{eqn:Cone of Concats}, we can write this arrangement as
\[
\KC^*(\F_{n+1}) \simeq \mCone{\delta_i}{\delta_j}{C^*(\F_n)}{ \braidAB[.15cm]{\E_{n+1}}{\delta_i} } { \left( \braidABmap[.15cm]{I_{\E_{n+1}}}{\phi_{i,j}} \right) } { \braidAB[.15cm]{\E_{n+1}}{\delta_j} }.
\]

For each fixed $\delta_i\in C^*(\F_n)$ with $\th(\delta_i)=n-2m_i$, the isotopy $\rho_i$ is provided by Lemma \ref{lem:slide thru E}
\[
\rho_i: \braidAB{\E_{n+1}}{\delta_i} \xrightarrow{\cong} \dtEdb[.235cm]{\topp{\delta_i}}{\E_{n+1-2m_i}}{\bott{\delta^i}} =: \Edd{i}
\]
where we will use the symbol $\Edd{i}$ to denote the large diagram on the right (the symbol $\bott{\delta^i}$ is meant to indicate the bottom portion of $\delta_i$ as in Figures \ref{fig:delta top bot} and \ref{fig:slide thru E}).  Corollary \ref{cor:Cone of Concats} then gives us a multicone presentation
\[
\KC^*(\F_{n+1}) \simeq \mConeFullhqs{\delta_i}{\delta_j}{C^*(\F_n)} {\Edd{i}} {(\phi_{i,j}')} {\Edd{j}} {a_i}{b_i}{a_j}{b_j}.
\]
Here, if we want to estimate the homological degree of any diagram $\epsilon_i$ coming from some fixed $\KChqs{a_i}{b_i}(\Edd{i})$ within the multicone, we will need to consider the three quantities (see Equation \ref{eqn:HomDeg in Multicone})
\[a_i,\qquad \h_{C^*(\F_n)}(\delta_i),\qquad \h_{\KC^*\left( \Edd{i} \right)}(\epsilon_i) .\]

The homological shift $a_i$ can be computed using Lemma \ref{lem:Reid grading shifts} as applied to the isotopies $\rho_i$.  As noted in the proof of Lemma \ref{lem:slide thru E}, we use only Reidemeister 2 moves during the isotopy $\rho_i$, two for each matching.  Each such move eliminates a pair of crossings, one positive and one negative, regardless of orientations of strands.  Thus we eliminate precisely $2m_i$ negative crossings, and we have
\begin{equation}\label{eqn:Reid shift in Fn+1 cone}
a_i=2m_i.
\end{equation}

The induction hypothesis provides a homological bound for diagrams in $C^*(\F_n)$, giving
\begin{equation}\label{eqn:delta in Fn}
\h_{C^*(\F_n)}(\delta_i)\leq 2m_i(n-m_i).
\end{equation}

Finally, we consider diagrams in the complex $\KC^*\left( \Edd{i} \right)$.  The diagrams $\topp{\delta_i}$ and $\bott{\delta^i}$ have no crossings, while $\E_{n+1-2m_i}$ has precisely $2(n-2m_i)$ crossings.  The all-zero resolution of these crossings contributes a diagram $\epsilon_{i,0}$ with
\begin{equation}\label{eqn:all zero res in inner E}
\h_{\KC^*\left( \Edd{i} \right)}(\epsilon_{i,0}) = 0, \quad \th(\epsilon_{i,0})=n+1-2m_i
\end{equation}
while the other resolutions contribute diagrams $\epsilon_i$ satisfying
\begin{equation}\label{eqn:epsilon in inner E}
\h_{\KC^*\left( \Edd{i} \right)}(\epsilon_i) \leq 2(n-2m_i), \quad \th(\epsilon_i)=n+1-2m
\end{equation}
for some $m>m_i$.

Now to prove the theorem, we turn our arguments around and fix some $m\in\{0,1,\dots,\left\lfloor \frac{n+1}{2} \right\rfloor \}$.  Let $\hat{C}^*(\F_{n+1})$ denote the multicone
\[\hat{C}^*(\F_{n+1}):=\mConeFullhqs{\delta_i}{\delta_j}{C^*(\F_n)} {\Edd{i}} {(\phi_{i,j}')} {\Edd{j}} {a_i}{b_i}{a_j}{b_j}.\]
Suppose that $\epsilon\in\hat{C}^*(\F_{n+1})$ with $\th(\epsilon)=n+1-2m$.  By Equations \ref{eqn:all zero res in inner E} and \ref{eqn:epsilon in inner E}, we see that $\epsilon$ must come from some complex $\KC^*\left( \Edd{i} \right)$ with $\th(\delta_i)=m_i\leq m$.  In the case of equality $m_i=m$, Equations \ref{eqn:Reid shift in Fn+1 cone}, \ref{eqn:delta in Fn}, and \ref{eqn:all zero res in inner E} combine to give us the estimate
\begin{align*}
\h_{\hat{C}^*(\F_{n+1})}(\epsilon) &= a_i + \h_{C^*(\F_n)}(\delta_i) + \h_{\KC^*\left( \Edd{i} \right)}(\epsilon)\\
	&\leq 2m_i + 2m_i(n-m_i) + 0\\
	&= 2m(n+1-m).
\end{align*}
In the case $m_i<m$, we use Equation \ref{eqn:epsilon in inner E} in place of Equation \ref{eqn:all zero res in inner E}, but the rest remains the same:
\begin{align*}
\h_{\hat{C}^*(\F_{n+1})}(\epsilon) &= a_i + \h_{C^*(\F_n)}(\delta_i) + \h_{\KC^*\left( \Edd{i} \right)}(\epsilon)\\
	&\leq 2m_i + 2m_i(n-m_i) + 2(n-2m_i)\\
	&= 2(m_i+1)(n+1-(m_i+1))\\
	&\leq 2m(n+1-m)
\end{align*}
where the final inequality uses the fact that $2x(n+1-x)$ is an increasing function of $x$ for $x\in \left[ 0,\left\lfloor \frac{n}{2} \right\rfloor \right]$ together with the fact that $m_i<m$ (and so in particular $m_i+1\leq m$).  Together, these two bounds show that the first condition of Theorem \ref{thm:single ft cx} is satisfied by $\hat{C}^*(\F_{n+1})$.

In order to find some diagram $\epsilon$ that satisfies the second condition of the theorem, we split into two cases.  As long as we are not in the case of odd $n$ with $m = \frac{n+1}{2}$, we can use the inductive assumption to find a diagram $\delta_i\in C^*(\F_n)$ with $\th(\delta_i)=n-2m$ and $\h(\delta_i)=2m(n-m)$ for our desired $m$.  We then consider the diagram $\epsilon_{i,0}$ of Equation \ref{eqn:all zero res in inner E} for this choice of $\Edd{i}$ with
\[\th(\epsilon_{i,0}) = n+1-2m,\] 
\begin{align*}
\h_{\hat{C}^*(\F_{n+1})}(\epsilon_{i,0}) &= 2m+2m(n-m)+0\\
&= 2m(n+1-m)
\end{align*}
as desired.

Finally, we consider the second condition of the theorem where $n$ is odd and $m=\frac{n+1}{2}$.  That is, we would like to find a diagram $\epsilon\in C^*(\F_{n+1})$ with $\th(\epsilon)=0$ and $\h(\epsilon)=(n+1)\left(\frac{n+1}{2}\right)$.

By the inductive assumption there exists some $\delta_i\in C^*(\F_n)$ with $m_i=\frac{n-1}{2}$ matchings, having $\th(\delta_i)=1$ and $\h(\delta_i)=(n-1)\left(\frac{n+1}{2}\right)$ (ie a term in $C^*(\F_n)$ of maximal homological degree and minimal through-degree).  From this $\delta_i$ we arrive at a diagram $\Edd{i}$ of the form
\[\Edd{i}=\dtEdb{\topp{\delta_i}}{\E_2}{\bott{\delta^i}}\]
and it is well-known that $\E_2=\F_2$ has a Khovanov complex having diagrams with through-degree zero in homological degrees one and two.  The homological degree 2 term here contributes the required diagram $\epsilon\in \hat{C}^*(\F_{n+1})$, since it would have
\[\h_{\hat{C}^*(\F_{n+1})}(\epsilon)=\left((n-1)\left(\frac{n+1}{2}\right)\right) + \left(2\left(\frac{n-1}{2}\right)\right) + \left(2\frac{}{}\right) = (n+1)\left(\frac{n+1}{2}\right)\]
as desired.

This indicates that our multicone $\hat{C}^*(\F_{n+1})$ has the first two desired properties, and is chain homotopy equivalent to $\KC^*(\F_{n+1})$.  Finally we can use Bar-Natan's local `delooping' relations \cite{BN2} to remove any disjoint circles from any of the diagrams in $\hat{C}^*(\F_{n+1})$ without altering homological gradings, arriving at our desired complex $C^*(\F_{n+1})$ and so we are done.
\end{proof}

\begin{corollary}\label{cor:F top degrees}
For even $n=2p$, $C^*(\F_n)$ has its two maximal non-empty homological gradings $2p^2-1$ and $2p^2$ containing only diagrams $\delta$ with $\th(\delta)=0$.
\end{corollary}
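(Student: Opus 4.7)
The plan is to derive the corollary from Theorem \ref{thm:single ft cx} applied with $n = 2p$, supplemented by a short observation drawn from its inductive proof.

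First I would settle the through-degree constraint. By Theorem \ref{thm:single ft cx}(1), any $\delta \in C^*(\F_n)$ with $\th(\delta) = 2p - 2m \geq 2$ (equivalently $m \leq p - 1$) satisfies
\[
\h(\delta) \leq 2m(2p - m) \leq 2(p-1)(p+1) = 2p^2 - 2,
\]
since $m \mapsto 2m(2p - m)$ is strictly increasing on $\{0, 1, \ldots, p\}$. Hence every diagram in homological grading $\geq 2p^2 - 1$ must have $\th(\delta) = 0$, and (applying the same bound with $m = p$) no diagrams whatsoever appear in grading $> 2p^2$. Non-emptiness of grading $2p^2$ is then immediate from Theorem \ref{thm:single ft cx}(2) with $m = p$.

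The main obstacle is securing non-emptiness of grading $2p^2 - 1$. For this I would revisit the last portion of the proof of Theorem \ref{thm:single ft cx} handling the inductive step from odd $n - 1 = 2p - 1$ to even $n = 2p$. There, one fixes $\delta_i \in C^*(\F_{n-1})$ with $m_i = p - 1$, $\th(\delta_i) = 1$, and $\h(\delta_i) = 2p(p-1)$, builds $\Edd{i}$ containing $\E_2 = \F_2$ in its middle, and exploits the through-degree-$0$ term of $\KC^*(\F_2)$ in homological grading $2$ to produce a term at grading $2p^2$ in $\hat{C}^*(\F_n)$. The key extra remark is that $\KC^*(\F_2)$ also contains a through-degree-$0$ term, namely $e_1$, in homological grading $1$ (coming from either the $01$- or the $10$-resolution, each of which simplifies to $e_1$ with no disjoint circles from $\F_2$ alone). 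Inserting this term into the multicone and applying Equation \ref{eqn:HomDeg in Multicone} places the resulting diagram of $\hat{C}^*(\F_n)$ at homological grading
\[
2m_i + \h_{C^*(\F_{n-1})}(\delta_i) + 1 = 2(p-1) + 2p(p-1) + 1 = 2p^2 - 1,
\]
with through-degree still $0$. Any disjoint circles incurred when $e_1$ is sandwiched between $\topp{\delta_i}$ and $\bott{\delta^i}$ are dispatched by Bar-Natan's delooping, which alters neither homological grading nor through-degree, yielding a nonzero through-degree-$0$ term in $C^*(\F_n)$ at grading $2p^2 - 1$. Together with the previous steps this completes the proof.
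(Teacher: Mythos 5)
Your through-degree argument is exactly the paper's: the paper's three-line proof simply observes that the homological bound $2m(n-m)$ from Theorem \ref{thm:single ft cx}(1) is increasing in $m$, so $\th(\delta)\geq 2$ forces $\h(\delta)\leq 2p^2-2$, and part (2) with $m=p$ gives non-emptiness of grading $2p^2$. That is the entire proof in the paper.

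Where you go beyond the paper is in your second part, establishing non-emptiness of grading $2p^2-1$. The paper's proof never addresses this, and in fact the later applications of the corollary (Theorem \ref{thm:Fk to Fk+1} and the $\KCsimp_{>-2}(1)$ truncation) only need the through-degree constraint on the top two gradings, not their non-emptiness, so the paper implicitly treats ``two maximal non-empty gradings'' as loose phrasing. Your supplementary observation is nonetheless correct: in the final case of the inductive step of Theorem \ref{thm:single ft cx} (odd $n-1=2p-1$, $m=p$), the same $\delta_i$ with $m_i = p-1$ and $\h(\delta_i)=2p(p-1)$ can be paired with the through-degree-zero term of $\KC^*(\F_2)$ at homological grading $1$ rather than $2$, and Equation \ref{eqn:HomDeg in Multicone} then places a $\th=0$ diagram at grading $2(p-1)+2p(p-1)+1 = 2p^2-1$; delooping preserves this. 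So you have proved a strictly stronger statement than what the paper actually verifies, by reaching back into the inductive construction of $C^*(\F_n)$. It is extra work that the downstream results do not require, but it does close a small gap between what the corollary literally asserts and what the paper's proof demonstrates.
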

\begin{proof}
Note that the homological bound $2m(n-m)$ is an increasing function of $m\in\{0,1,\dots,p\}$ (or equivalently, a decreasing function of through-degree).
\begin{align*}
&\th(\delta)=0=n-2p &\Longrightarrow & &\h(\delta)\leq 2p(n-p)=2p^2\\
&\th(\delta)=2=n-2(p-1) &\Longrightarrow & &\h(\delta)\leq 2(p-1)(n-p+1)=2p^2-2\\
&\th(\delta)>2 &\Longrightarrow & &\h(\delta)<2p^2-2
\end{align*}
\end{proof}

\subsection{Equivalence of $KC^*(\F_n^k)$ and $KC^*(\F_n^{k+1})$ in high homological degree for even $n$}
\label{sec:equiv k to k+1}
Throughout this subsection (and indeed for the majority of the rest of this manuscript) we will focus on even $n=2p$.  In Section \ref{sec:KC(F1)} we defined a complex $C^*(\F_n)$ which will play the role of $\KCsimp(1)$ in Theorem \ref{thm:inf twist complex}.  Corollary \ref{cor:F top degrees} shows how the top two degrees of $C^*(\F_n)$, which will play the role of $\KCsimp_{>-2}(1)$, have special properties.  Our goal in this section is to define complexes $C^*(\F_n^k)$ inductively to play the role of $\KCsimp(k)$.  The inductive definition will be tailored to allow for the inclusion of truncations demanded by Theorem \ref{thm:inf twist complex}.

\begin{theorem}\label{thm:Fk to Fk+1}
Fix $n=2p$.  Then for all $k\in\N$ there exists a complex $C^*(\F_n^k)$ satisfying the following properties for each fixed $k$.
\begin{enumerate}[(i)]
\item \label{it:equiv to FTk} We have a chain homotopy equivalence
\[\C^*(\F_n^k)\simeq \KC^*(\F_n^k).\]
\item \label{it:th deg zero} The shifted, truncated complex $\h^{-2kp^2}\q^{-2kp(p+1)}C^*_{>-2k}(\F_n^k)$ is comprised of diagrams $\delta$ having no disjoint circles, and satisfying $\h(\delta)\leq 0$, $\th(\delta)=0$.
\item \label{it:k to k+1} We have an equality of shifted, truncated complexes
\begin{equation}\label{eqn:Fk to Fk+1}
\h^{-2kp^2}\q^{-2kp(p+1)}C^*_{>-2k}(\F_n^k) = \h^{-2(k+1)p^2}\q^{-2(k+1)p(p+1)}C^*_{>-2k}(\F_n^{k+1}).
\end{equation}
In other words, the top $2k$ degrees of $C^*(\F_n^k)$ are precisely the same as the top $2k$ degrees of $C^*(\F_n^{k+1})$ (with a $q$-degree shift).
\end{enumerate}
\end{theorem}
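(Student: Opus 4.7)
The plan is to induct on $k$, building the sequence $\{C^*(\F_n^k)\}_{k \in \N}$ compatibly. For the base case $k = 1$, set $C^*(\F_n^1) := C^*(\F_n)$ from Theorem \ref{thm:single ft cx}. Property (i) is immediate, and property (ii) follows from Corollary \ref{cor:F top degrees}, which places every diagram of through-degree zero in the top two homological degrees $\{2p^2 - 1, 2p^2\}$, combined with a direct check that the shift $\h^{-2p^2}\q^{-2p(p+1)}$ places these degrees at $\{-1, 0\}$ while keeping them free of disjoint circles (item 3 of Theorem \ref{thm:single ft cx}). The equality demanded by (iii) at $k = 1$ is not a condition on $C^*(\F_n^1)$ alone and will be verified once $C^*(\F_n^2)$ is built compatibly in the inductive step.

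For the inductive step, assume $C^*(\F_n^k)$ satisfies (i)-(iii). Writing $\F_n^{k+1} = \F_n \cdot \F_n^k$, functoriality of $\KC^*$ under planar composition together with the inductive (i) yields
\[ \KC^*(\F_n^{k+1}) \simeq C^*(\F_n) \cdot C^*(\F_n^k). \]
We view the right-hand side as a multicone indexed by the diagrams $\delta_i \in C^*(\F_n^k)$, with $\delta_i$th slot equal to $C^*(\F_n) \cdot \delta_i$, and define $C^*(\F_n^{k+1})$ to be this multicone after Bar-Natan's delooping has been applied everywhere to eliminate disjoint circles arising from concatenation. Property (i) is then immediate. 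For property (ii), observe that a diagram $\eta_a \cdot \delta_i$ in the top $2(k+1)$ degrees of $C^*(\F_n^{k+1})$ satisfies $\h(\eta_a) + \h(\delta_i) \geq 2(k+1)p^2 - 2k - 1$: if $\h(\eta_a) \geq 2p^2 - 1$ then $\th(\eta_a) = 0$ by Corollary \ref{cor:F top degrees}, while if $\h(\eta_a) \leq 2p^2 - 2$ then $\h(\delta_i) \geq 2kp^2 - 2k + 1$, putting $\delta_i$ into the top $2k$ of $C^*(\F_n^k)$ where $\th(\delta_i) = 0$ by the induction hypothesis (ii). In either case one factor has $\th = 0$, so $\th(\eta_a \cdot \delta_i) = 0$, and delooping removes any remaining disjoint circles.

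The heart of the argument lies in property (iii). One tracks which pairs $(\eta_a, \delta_i)$ contribute to the top $2k$ degrees of $C^*(\F_n^{k+1})$: combining the bound $\h(\eta_a) \leq 2m_a(n - m_a) = 2p^2 - 2s_a^2$ from Theorem \ref{thm:single ft cx} (writing $s_a := p - m_a$) with $\h(\delta_i) \leq 2kp^2$, the constraint $\h(\eta_a) + \h(\delta_i) \geq 2(k+1)p^2 - 2k + 1$ forces the through-defect to satisfy $s_a^2 \leq k - 1$ and places $\delta_i$ in the top $2k - 2s_a^2$ of $C^*(\F_n^k)$, which by the induction hypothesis is through-degree zero. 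One then applies Corollary \ref{cor:Consistent Cone of Concats} with $\mathcal{X} = \eta_a$: cup-sliding the cups of $\topp{\delta_i}$ up through $\eta_a$, via the full-twist analog of Lemma \ref{lem:slide thru E} using only Reidemeister 2 simplifications, produces the required VSDRs. Comparing the resulting multicone degree-by-degree with the shifted $C^*(\F_n^k)$ then yields the on-the-nose equality (iii).

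The main obstacle is the bookkeeping of two competing effects in the top degrees. First, delooping the disjoint circles in the middle piece $\bott{\eta_a} \cdot \topp{\delta_i}$ introduces factors of $(\q + \q^{-1})^c$, producing multiple copies of each underlying diagram at different $q$-shifts, whereas (iii) demands a \emph{single} equality with the $\q^{-2p(p+1)}$-shifted $C^*(\F_n^k)$. Second, contributions from $\eta_a$ of positive through-degree (which are permitted once $k \geq 2$) must be shown, via the multicone differentials, to cancel against the excess copies produced by delooping on the $s_a = 0$ terms. Resolving both will require pinning down a distinguished maximal-degree through-degree-zero ``projector'' diagram $\alpha^* \in C^*(\F_n)$ whose concatenation with $\topp{\delta_i}$ produces the canonical $p$ circles carrying the required $\q^{-2p(p+1)}$ shift, and then invoking Proposition \ref{prop:dotted functoriality} to match the induced cobordism maps in the top $2k$ of $C^*(\F_n^{k+1})$ with those of $C^*(\F_n^k)$, up to signs consistently absorbed into the Reidemeister maps as in Corollary \ref{cor:Consistent Cone of Concats}.
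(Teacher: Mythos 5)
Your inductive skeleton is sound, and your verification of property (ii) — bounding $\h(\eta_a)+\h(\delta_i)$ and splitting cases on whether $\eta_a$ or $\delta_i$ lands in the range where through-degree must vanish — is essentially identical to what the paper does once notation is translated. But there is a genuine gap in your argument for property (iii), stemming from a structural decision made at the start of the inductive step that is different from the paper's and, as you partly recognize, substantially harder to carry through.

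You pass immediately from $\KC^*(\F_n^{k+1})$ to $C^*(\F_n)\cdot C^*(\F_n^k)$ and build the multicone with slot $C^*(\F_n)\cdot\delta_i$. Once you have replaced $\F_n$ by its crossingless model $C^*(\F_n)$, there are no longer any crossings in the slot, and so no Reidemeister moves to perform: your proposed application of Corollary \ref{cor:Consistent Cone of Concats} with $\mathcal{X}=\eta_a$ (a single Temperley--Lieb diagram) is a type error, since the hypotheses require a tangle isotopy $\rho_i:\mathcal{X}\cdot\delta_i\to\delta_i$ built from Reidemeister 1 and 2 simplifications, and a crossingless $\mathcal{X}\cdot\delta_i$ admits none. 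What remains available to you is only delooping, and when $\th(\eta_a)=\th(\delta_i)=0$ the middle region $\bott{\eta_a}\cdot\topp{\delta_i}$ carries $p$ closed circles, so each slot contributes $2^p$ shifted copies of $\delta_i$ instead of the single copy that the literal equality in (iii) requires. You correctly flag this as ``the main obstacle,'' but the sketch via a distinguished ``projector diagram'' $\alpha^*$ and cancellations against positive-through-degree terms of $C^*(\F_n)$ is not carried out, and it is far from clear that it can be made to yield an on-the-nose equality of complexes rather than merely a chain homotopy equivalence, which is strictly weaker than what (iii) asserts.

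The paper avoids this entirely by \emph{not} passing to $C^*(\F_n)$ in the slots: it keeps the genuine tangle $\F_n$, writes $\KC^*(\F_n^{k+1})$ as a multicone over $\delta_i\in C^*(\F_n^k)$ with slot $\KC^*(\F_n\cdot\delta_i)$, and uses Lemma \ref{lem:ft Reid shifts} to pull the matchings of $\delta_i$ up through the actual full twist via Reidemeister 1 and 2 moves. Those moves are very strong deformation retracts (Lemma \ref{lem:R1 and R2 are VSDR}), so Corollary \ref{cor:Consistent Cone of Concats} legitimately applies, and Lemma \ref{lem:ft cone preserved} supplies the consistent choice of signs. When $\th(\delta_i)=0$ the twist unwinds completely, so the slot collapses to the single-term shifted $\delta_i$ with \emph{no circles and no delooping}, and the top $2k$ degrees of $C^*(\F_n^{k+1})$ are term-by-term identical to those of $C^*(\F_n^k)$ up to the stated shift. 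To repair your proof, simplify $\F_n^k$ only (by the inductive hypothesis) but leave the fresh $\F_n$ as a tangle, and run the cup-sliding through $\F_n$ rather than through the already-resolved $\eta_a$.
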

We will prove Theorem \ref{thm:Fk to Fk+1} using a combination of Corollary \ref{cor:Cone of Concats} and Corollary \ref{cor:Consistent Cone of Concats}.  We begin with a simple lemma for presenting the necessary Reidemeister maps $\rho$ related to concatenating with a full twist $\F_n$, together with their homological shifts.

\begin{lemma}\label{lem:ft Reid shifts}
If $\delta$ is a Temperley-Lieb diagram with $m$ matchings, so that $\th(\delta)=n-2m$ and $\delta=\topp{\delta}\cdot\iota_{n-2m}\cdot\bott{\delta}$ (as in Definition \ref{def:delta top bot}), we have
\begin{equation}\label{eqn:ft Reid shifts}
\KC^* \left( \braidAA[.15cm]{\F_n}{\delta} \right) \simeq \KChqs{2m(n-m)}{2m(n+1-m)}^* \left( \braidAAA{\topp{\delta}}{\F_{n-2m}}{\bott{\delta}} \right)
\end{equation}
via a tangle isotopy utilizing only Reidemeister 1 and 2 moves which remove crossings.
\end{lemma}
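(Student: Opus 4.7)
The plan is to perform cup-sliding one innermost cap at a time, in the same spirit as the proof of Lemma \ref{lem:slide thru E}. Starting from $\F_n$ stacked on top of $\delta$, I repeatedly select an innermost cap in $\topp{\delta}$ and push it up through the current full twist via a planar isotopy. The $k$-th such slide (for $k = 1, \ldots, m$) begins with a full twist $\F_{n'}$ on $n' = n - 2(k-1)$ strands and reduces it to $\F_{n'-2}$, with the cap migrating to the top to join the previously slid caps. After all $m$ slides, the result is $\topp{\delta}$ at the top (containing all $m$ matchings), $\F_{n-2m}$ in the middle, and $\bott{\delta}$ at the bottom, matching the claimed form.

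For the local analysis of a single cap slide through $\F_{n'}$, the key point is that each pair of strands in $\F_{n'}$ crosses exactly twice. The cap's two strands therefore have $2$ self-crossings (removed by Reidemeister 1 moves as the cap passes them and they become kinks), and each of the cap's two strands has $2$ crossings with each of the other $n'-2$ strands, for a total of $4(n'-2)$ crossings (removed by $2(n'-2)$ Reidemeister 2 moves). For sign accounting, since the cap forces its two strands to be oppositely oriented, the two self-crossings between them are right-handed crossings of oppositely oriented strands, i.e., negative, so each R1 eliminates a negative crossing. Moreover, for any outside strand $C$, the crossings of $C$ with strand $i$ and the crossings of $C$ with strand $i+1$ have opposite signs (again because $i$ and $i+1$ are oppositely oriented), giving $2$ positive and $2$ negative crossings per outside strand, which pair into $2$ R2 moves.

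Applying Lemma \ref{lem:Reid grading shifts}, the $k$-th slide removes $2(n'-1) = 2(n-2k+1)$ negative crossings and $2(n'-2) = 2(n-2k)$ positive crossings, yielding a homological shift of $2(n-2k+1)$ and a $q$-shift of $2\cdot 2(n-2k+1) - 2(n-2k) = 2(n-2k+2)$. Summing over $k = 1, \ldots, m$ gives total shifts
\[
\sum_{k=1}^m 2(n-2k+1) = 2m(n-m), \qquad \sum_{k=1}^m 2(n-2k+2) = 2m(n+1-m),
\]
matching the claimed values. The main obstacle is the local geometric verification of a single cup-slide — particularly confirming that the moves decompose cleanly into $2$ R1 moves and $2(n'-2)$ R2 moves, all of which remove crossings; once this is settled, the orientation argument above handles the sign bookkeeping and the arithmetic is routine.
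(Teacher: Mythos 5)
Your proof is correct and takes essentially the same route as the paper's: both slide one innermost matching at a time through the full twist via Reidemeister 1 and 2 simplifications, then count removed positive and negative crossings and invoke Lemma \ref{lem:Reid grading shifts}. The only (cosmetic) difference is that you do the crossing count slide-by-slide and sum, while the paper compares total crossing counts of $\F_n$ and $\F_{n-2m}$ directly; your orientation argument for why the R2-removed crossings split evenly into positive and negative is slightly more explicit than the paper's (which just notes that R2 moves always remove one of each sign), but both yield the same tally.
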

\begin{proof}
We pull each `innermost' matching of $\topp{\delta}$ up through the full twist as shown in Figure \ref{fig:pull thru ft} one at a time.  Each such pull removes 2 strands from $\F_n$ utilizing only Reidemeister 1 and 2 simplifications, so that we are left with $\F_{n-2m}$.  To compute the grading shifts, we count positive and negative crossings.  $\F_n$ had $n(n-1)$ crossings, while $\F_{n-2m}$ has $(n-2m)(n-2m-1)$ crossings, so we eliminated a total of $4m(n-m)-2m$ crossings.  Each matching pull removes precisely 2 crossings that are between the matched strands, which must be negative crossings (regardless of orientation).  This means $2m$ of the eliminated crossings were negative.  The other $4m(n-m-1)$ crossings occurred between unmatched strands, and thus must have occurred in positive and negative pairs (again regardless of orientation), and so we have removed $2m(n-m-1)$ positive crossings and $2m(n-m)$ negative crossings.  Lemma \ref{lem:Reid grading shifts} can then be used to complete the computation.

\begin{figure}
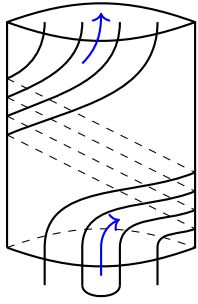

\centering
\FTpullthru
\caption{A matching can always be pulled through a full twist, as illustrated above.  Each matching in $\delta$ can be pulled through in this way, ending up in precisely the same spot above $\F_n$ as it was below it, but removing 2 strands from $\F_n$.}
\label{fig:pull thru ft}
\end{figure}

\end{proof}

Our next construction is designed to allow for the full use of Corollary \ref{cor:Consistent Cone of Concats} by assigning signs to Reidemeister maps $\rho_i$ coming from Lemma \ref{lem:ft Reid shifts} in the case where $\th(\delta_i)=0$.  This portion of the argument is based on Rozansky's discussion on `quasi-trivial' tangles (Section 7.3 in \cite{Roz}), where the full twist is considered quasi-trivial without proof (the same claim is made for the Jucys-Murphy braid - see Remark \ref{rmk:JM not quasi-trivial?}).  We will provide this proof for the full twist below (but will circumvent the corresponding claim for the Jucys-Murphy braid in Section \ref{sec:general Kh invariance}).

Following \cite{Roz}, we begin by fixing a `closure' diagram for our $n$ strands
\[\gamma:=
\begin{tikzpicture}[baseline={([yshift=-.7ex]current bounding box.center)},x=.25cm,y=.22cm]
\foreach \n in {0,2,5}
{
\draw[thick]
(\n,0) -- (\n,1) to[out=90,in=90] (\n+1, 1) -- (\n+1,0);
}
\draw
(4.1,0.7) node[scale=.7] {\dots};
\end{tikzpicture}.
\]
This allows us a consistent Reidemeister map
\[
\rho_\gamma: \toplessbraidAA[.15cm]{\gamma}{\F_n} \xrightarrow{\cong} \toplessbraidA[.15cm]{\gamma}
\]
via a reflection of the isotopy in Figure \ref{fig:pull thru ft}.  This in turn gives us, for any $\delta_i$ with $\th(\delta_i)=0$, a pair of Reidemeister maps

\rhogammarhoitangleCDprep
\begin{equation}\label{eq:rhogammarhoitangle}
\begin{tikzcd}
\usebox{\boxA} \arrow[rr,bend left,"{\usebox{\boxAB}}"] \arrow[rr,bend right,swap,"{\usebox{\boxABa}}"] & & \usebox{\boxB}
\end{tikzcd}.
\end{equation}

\begin{lemma}\label{lem:FT quasitrivial}
The two maps of Equation \ref{eq:rhogammarhoitangle} are isotopic as cobordisms embedded in $B^3\times[0,1]$ (ie the full twist is quasi-trivial in the language of \cite{Roz}).
\end{lemma}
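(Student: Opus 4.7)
The plan is to exhibit an ambient isotopy of $B^3 \times [0,1]$, fixing the boundary, that carries the cobordism surface traced by $\rho_\gamma \cdot I_{\delta_i}$ to the one traced by $I_\gamma \cdot \rho_i$. A key preliminary observation is that, since $\th(\gamma) = \th(\delta_i) = 0$, the tangle $\gamma \cdot \F_n \cdot \delta_i$ has no external endpoints and is really a link $L \subset B^3$, and both maps in Equation \ref{eq:rhogammarhoitangle} are traces in $B^3 \times [0,1]$ of ambient isotopies of $L$ to the link $L' = \gamma \cdot \delta_i$. Since the two cobordisms have identical boundary, it suffices to build a one-parameter family of ambient isotopies of $B^3$ connecting the underlying link isotopies.

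To build this family, I would parametrize the height inside $\F_n$ by $[0,1]$, and for $s \in [0,1]$ define an ambient isotopy that simultaneously pulls each of the $p$ matchings of $\gamma$ upward into $\F_n$ through the sub-interval $[0,s]$ while pulling each of the $p$ matchings of $\delta_i$ downward through $[s,1]$. The two families of caps meet at height $s$ after locally absorbing the full twist on both sides via iterated Reidemeister 2 moves, exactly as in Lemma \ref{lem:ft Reid shifts}. At $s = 1$ this recovers $\rho_\gamma \cdot I_{\delta_i}$, and at $s = 0$ it recovers $I_\gamma \cdot \rho_i$. Because $\F_n$ induces the trivial permutation on strands, for each innermost matching of $\gamma$ there is a canonical corresponding matching of $\delta_i$ (via the consistent nested pairing of strands across the twist), so the two caps that meet at height $s$ really close up onto the same pair of strands. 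Varying $s$ continuously from $0$ to $1$ sweeps out a homotopy of ambient isotopies whose trace in $B^3 \times [0,1]$ is the desired ambient isotopy of cobordisms.

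The main obstacle will be verifying that this partial-pull interpolation is a \emph{smooth} family of ambient isotopies with no self-intersections at intermediate $s$. One has to check that the caps of $\gamma$ advanced into the lower sub-twist and the caps of $\delta_i$ advanced into the upper sub-twist remain pairwise disjoint from one another and from the remaining twisted strands. The crucial input is the locality of each R2 simplification in Lemma \ref{lem:ft Reid shifts}: each innermost matching pulls through a pair of adjacent strands only, and successively pulling nested caps (from innermost outward) on both sides simultaneously guarantees non-interference. Once this is established, the interpolating family assembles into an ambient isotopy of cobordisms in $B^3 \times [0,1]$, and then Proposition \ref{prop:dotted functoriality} implies the induced chain maps agree up to sign, which is the content of quasi-triviality.
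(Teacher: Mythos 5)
Your approach is genuinely different from the paper's, and the paper's is substantially simpler; moreover your version has a real gap that the paper's sidesteps entirely.

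The paper does not attempt an explicit interpolation. Instead it factors: writing $\rho_{2\pi} := \rho_\gamma^{-1}\cdot\rho_i$, one has $(I_\gamma\cdot\rho_i) \cong (\rho_\gamma\cdot I_{\delta_i})\circ\rho_{2\pi}$, so it suffices to show the self-cobordism $\rho_{2\pi}$ of $\gamma\cdot\F_n\cdot\delta_i$ is isotopic to the identity. It is geometrically a $2\pi$ rotation of the (unlinked, closed) tangle, and the conclusion then follows from a dimension count: $\rho_{2\pi}$ is a properly embedded $1$-manifold in the $4$-manifold $B^3\times[0,1]$ with prescribed boundary and no closed components, and any two such are isotopic rel boundary by general position in codimension $3$. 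This completely avoids constructing an interpolating family; the whole content is reduced to the trivial statement that arcs in a $4$-ball with the same endpoints are isotopic. Your proof misses this cheap global argument and instead commits to building a concrete one-parameter family of ambient isotopies $W_s$.

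The gap in your construction is the step you flag yourself and then dismiss. The ``partial pull'' is not well-defined as stated: $\rho_\gamma$ and $\rho_i$ are each built by pulling cups through $\F_n$ \emph{sequentially}, innermost first, with each pull reducing the number of active strands before the next cup can move (this is why Lemma~\ref{lem:ft Reid shifts} works). There is no natural ``height-$s$'' slice of this process that could simultaneously serve as the stopping point for all $p$ cups of $\gamma$ coming from one side and all $p$ cups of $\delta_i$ coming from the other. Worse, the assertion that each innermost cup of $\gamma$ has a ``canonical corresponding matching of $\delta_i$'' is false in general: $\gamma$ is the fixed nested closure matching $1\text{--}2,\ 3\text{--}4,\ldots$, while $\delta_i$ is an arbitrary through-degree-zero Temperley--Lieb diagram whose nesting pattern need not agree with $\gamma$'s, so the two families of cups do not pair off and will not ``meet at height $s$'' in the way your picture requires. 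The fact that $\F_n$ is a pure braid only tells you about which boundary circles each component of the cobordism connects; it does not synchronize the two cup-sliding processes. To make your interpolation rigorous you would essentially have to re-derive the general position fact by hand, which is exactly what the dimension argument provides for free. Finally, a minor point of scope: the lemma only asserts the cobordisms are isotopic; the passage to chain maps via functoriality is made in the paragraph after the lemma, not inside its proof.
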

\begin{proof}
We have the obvious isotopy

\rhogammarhoitangleCDprepTWO
\[
\begin{tikzcd}
\usebox{\boxA} \arrow[rr,bend left,"{\usebox{\boxAB}}"] \arrow[rr,bend right,swap,"{\usebox{\boxABa}}"] & \cong & \usebox{\boxB}
\end{tikzcd}
\]
and it is clear in $B^3\times[0,1]$ that
\[\toplessbraidAA[.25cm]{\rho_\gamma \circ \rho_\gamma^{-1}}{I_{\delta_i}} \circ \toplessbraidAA[.15cm]{I_\gamma}{\rho_i} \cong \toplessbraidAA[.15cm]{\rho_\gamma}{I_{\delta_i}} \circ \toplessbraidAA[.15cm]{\rho_\gamma^{-1}}{\rho_i}.\]
Let $\rho_{2\pi}$ denote the cobordism $\rho_\gamma^{-1}\cdot\rho_i$, which is isotopic to rotating the disjoint link $\gamma\cdot\F_n\cdot\topp{\delta_i}$ by a full $2\pi$ radians.  The key point is to notice that the link $\gamma\cdot\F_n\cdot\topp{\delta_i}$ is actually the \emph{unlink} and so bounds disjoint discs that are maintained throughout the rotation $\rho_{2\pi}$.  Thus $\rho_{2\pi}$ is isotopic to a cobordism that shrinks each component of $\gamma\cdot\F_n\cdot\topp{\delta_i}$ to a small circle (roughly a point in $B^3$) before rotating and then expanding back to $\gamma\cdot\F_n\cdot\topp{\delta_i}$, which is isotopic to identity since any two 1-dimensional cobordisms in $B^3\times[0,1]$ (with the same boundary and no closed components) are isotopic by dimension arguments.  Thus $\rho_{2\pi}=\rho_\gamma^{-1}\cdot\rho_i\cong I$ and we are done.
\end{proof}

With Lemma \ref{lem:FT quasitrivial} in hand, the functoriality of Bar-Natan's construction tells us that the link cobordisms in Equation \ref{eq:rhogammarhoitangle} must induce chain maps that are homotopic up to a sign which we will denote by $\sigma_i$.  The reader can then verify that the following diagram is entirely commutative up to homotopy, with compositions homotopic to identity:

\rhogammarhoichainCDprep
\begin{equation}\label{eqn:rho_gammarho_i}
\begin{tikzcd}
\usebox{\boxA} \arrow[rrrr,shift left=.5, bend left=05, "{\usebox{\boxAB}}", pos=.4] \arrow[rrrr,shift right=.5, bend right=05, swap, "{\usebox{\boxABa}}", pos=.4]
& & & & \usebox{\boxB} \arrow[llll, bend right=40, swap, "{\usebox{\boxBA}}", pos=.45] \arrow[llll, bend left=40, "{\usebox{\boxBAa}}", pos=.45]
\end{tikzcd}.
\end{equation}

\begin{lemma}\label{lem:ft cone preserved}
Fix $n=2p$.  If $\C^*$ is a chain complex made entirely of dotted cobordisms $\phi_{i,j}$ between diagrams $\delta_i,\delta_j$ all having through-degree zero \emph{and} having no disjoint circles, then
\begin{equation}\label{eqn:ft cone preserved}
\mCone{\delta_i}{\delta_j}{\C^*} {\braidAA[.15cm]{\F_n}{\delta_i}} {\left(\braidAAmap[.15cm]{I_{\F_n}}{\phi_{i,j}}\right)} {\braidAA[.15cm]{\F_n}{\delta_j}} \simeq \h^{2p^2}\q^{2p(p+1)}\C^*
\end{equation}
\end{lemma}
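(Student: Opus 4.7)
The plan is to apply Corollary~\ref{cor:Consistent Cone of Concats} with $\mathcal{X}=\F_n$ and the given complex $\C^*$. For each $\delta_i\in\C^*$, the hypotheses $\th(\delta_i)=0$ and the absence of disjoint circles mean that $\delta_i$ admits a well-defined decomposition $\delta_i = \topp{\delta_i}\cdot\bott{\delta^i}$ (the middle identity $\iota_0$ is empty). Specializing Lemma~\ref{lem:ft Reid shifts} to $m=p$ then produces an isotopy
\[
\rho_i : \braidAA[.15cm]{\F_n}{\delta_i} \xrightarrow{\cong} \delta_i
\]
built from $p$ matching-pulls, each contributing only Reidemeister~1 and~2 simplifications that remove crossings. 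The resulting grading shifts are $a_i = 2p(n-p) = 2p^2$ and $b_i = 2p(n{+}1-p) = 2p(p{+}1)$, both independent of $i$; this verifies the first bullet of Corollary~\ref{cor:Consistent Cone of Concats}.

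Next I would check the cobordism compatibility $\rho_j \circ (I_{\F_n}\cdot\phi_{i,j}) \cong \phi_{i,j}\circ\rho_i$ in $B^3\times[0,1]$. Both cobordisms start at $\F_n\cdot\delta_i$, end at $\delta_j$, and consist of the same set of elementary moves, differing only in the order in which the Reidemeister moves making up $\rho$ (supported near the top half of the diagram, where $\F_n$ sits) and the elementary moves of $\phi_{i,j}$ (supported near the bottom half, where $\delta$ sits) are performed. Because these supports are spatially disjoint, one may stratify the time coordinate so that the Reidemeister moves occur either uniformly before or uniformly after the moves of $\phi_{i,j}$, yielding the required ambient isotopy. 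With this in hand, Corollary~\ref{cor:Consistent Cone of Concats} produces the equivalence of Equation~\eqref{eqn:ft cone preserved} up to a system of $\pm$ signs decorating the maps $\phi_{i,j}$.

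The main obstacle is then cancelling these residual signs by a consistent rescaling $\rho_i^* \mapsto \sigma_i\rho_i^*$, and this is exactly where Lemma~\ref{lem:FT quasitrivial} enters. Fix once and for all the closure diagram $\gamma$ together with its Reidemeister map $\rho_\gamma$. Applied to each $\delta_i$, quasi-triviality of $\F_n$ attaches a well-defined sign $\sigma_i\in\{\pm 1\}$ comparing the two paths in diagram~\eqref{eqn:rho_gammarho_i}, namely $(\rho_\gamma\cdot I_{\delta_i})^*$ versus $(I_\gamma\cdot\rho_i)^*$. For any $\phi_{i,j}\in\C^*$, closing up the square $\rho_j\circ(I_{\F_n}\cdot\phi_{i,j}) \cong \phi_{i,j}\circ\rho_i$ with $\gamma$ produces a dotted link cobordism in $S^3$ whose induced chain map is well-defined up to an overall sign by Proposition~\ref{prop:dotted functoriality}; comparing the two ways of decomposing this closed square through the $\sigma_i$ forces the Bar-Natan-functoriality sign appearing in Corollary~\ref{cor:Consistent Cone of Concats} to factor as $\sigma_i\sigma_j^{-1}$. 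Hence the rescaled maps $\sigma_i\rho_i$ furnish the consistent sign choice demanded by the final assertion of Corollary~\ref{cor:Consistent Cone of Concats}, and Equation~\eqref{eqn:ft cone preserved} follows.
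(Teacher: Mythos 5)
Your overall strategy matches the paper's: apply Corollary \ref{cor:Consistent Cone of Concats} with $\mathcal{X}=\F_n$ and the cup-sliding maps $\rho_i$ of Lemma \ref{lem:ft Reid shifts}, confirm the constant grading shifts $a_i=2p^2$ and $b_i=2p(p+1)$, and then use the closure $\gamma$ together with the quasi-triviality of Lemma \ref{lem:FT quasitrivial} to manufacture consistent signs $\sigma_i$. But the justification you give for the cobordism compatibility $\rho_j\circ(I_{\F_n}\cdot\phi_{i,j})\cong\phi_{i,j}\circ\rho_i$ is not correct as stated. When $\delta_i\neq\delta_j$, the isotopies $\rho_i$ and $\rho_j$ are \emph{different} sequences of Reidemeister moves — they pull the matchings of $\delta_i$ and $\delta_j$, respectively, up through $\F_n$ — so the two sides of the square do not consist of "the same set of elementary moves, differing only in the order." Nor are the supports spatially disjoint: $\rho_i$ acts on the cups $\topp{\delta_i}$ as it slides them into $\F_n$, and the dotted cobordism $\phi_{i,j}$ can act on those same cups. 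The paper's argument for this step instead invokes the "consistent gap" afforded by the no-disjoint-circles hypothesis; the genuine disjointness of supports it exploits is that of $\rho_\gamma$ and $\phi_{i,j}$, which enters only in the sign computation, not here.

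Your sign argument also skips a step. Closing with $\gamma$ and comparing through the $\sigma_i$ does pin down a sign $\sigma_i\sigma_j^{-1}$ for the \emph{closed} square, but that does not by itself determine the Bar-Natan functoriality sign for the \emph{unclosed} square that Corollary \ref{cor:Consistent Cone of Concats} actually concerns. The paper bridges this by observing that any $\phi_{i,j}$ decomposes into saddles and single-dotted identity cobordisms, both of which remain nonzero after composing with $I_\gamma$; hence a sign discrepancy before closing would persist after closing, so the closed computation is faithful. Without that observation, the closed-sign computation says nothing about the open one.
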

\begin{proof}
The braid isotopies of Lemma \ref{lem:ft Reid shifts} provide the maps $\rho_i: \mathcal{F}_n \cdot \delta_i \xrightarrow{\cong} \delta_i$ which utilize only Reidemeister 1 and 2 simplifications, as required by Corollary \ref{cor:Consistent Cone of Concats}.  Since the $\delta_i,\delta_j$ both have no disjoint circles, there is a consistent gap between their top and bottom matchings respected by the dotted cobordism $\phi_{i,j}$ allowing us to conclude that the tangle cobordisms $\rho_i,\rho_j,\phi_{i,j}$ commute up to isotopy, satisfying the second requirement of Corollary \ref{cor:Consistent Cone of Concats}.  The reader can quickly check that the grading shifts are precisely as indicated (note $\th(\delta_i)=0\Rightarrow m_i=p$).  We claim that the signs $\sigma_i$ defined with the help of $\gamma$ above provide the necessary consistency allowing us to collapse the bottom two layers of Figure \ref{fig:Consistent Cone of Concats proof} into the following single commuting diagram for all $\delta_i,\delta_j\in\C^*$:

\consistentFTconeCDprep
\begin{equation}\label{eqn:consistentFTcone}
\begin{tikzcd}
\usebox{\boxC} \arrow[rrrr, "{\usebox{\boxCD}}"] \arrow[ddd,"{\usebox{\boxCE}}"] \arrow[dddrrrr, dashed, "{H=0}"] & & & & \usebox{\boxD} \arrow[ddd,"{\usebox{\boxDF}}"] \\
\\
\\
\usebox{\boxE} \arrow[rrrr, "{\usebox{\boxEF}}"] & & & & \usebox{\boxF} 
\end{tikzcd}.
\end{equation}

To prove that the maps of Equation \ref{eqn:consistentFTcone} are indeed homotopic (and thus commute), we consider closing all of the diagrams present with our fixed closure $\gamma$ (now omitting the trivial homotopy $H$):

\RozQuasiIdeaCDprep
\begin{equation}\label{eqn:RozQuasiIdea}
\begin{tikzcd}
\usebox{\boxC} \arrow[rrrrrr, "{\usebox{\boxCD}}"] \arrow[ddd,"{\usebox{\boxCE}}"] & & & & & & \usebox{\boxD} \arrow[ddd,"{\usebox{\boxDF}}"] \\
\\
\\
\usebox{\boxE} \arrow[rrrrrr, "{\usebox{\boxEF}}"] & & & & & & \usebox{\boxF} 
\end{tikzcd}.
\end{equation}
We claim that the commutativity of the maps in Equation \ref{eqn:RozQuasiIdea} implies the same in Equation \ref{eqn:consistentFTcone}.  Indeed since any planar cobordism $\phi_{i,j}$ can be decomposed into a sequence of saddles and single-dotted identity cobordisms, it is enough to note that both of these types of maps remain non-zero after `closing' with $I_\gamma$.  Thus any sign discrepancy in the commutativity of Equation \ref{eqn:consistentFTcone} would persist in Equation \ref{eqn:RozQuasiIdea}, and so it is enough to show that the maps in Equation \ref{eqn:RozQuasiIdea} commute.

If we allow a slight abuse of notation whereby we omit the presence of concatenated identity cobordisms, we utilize the homotopies of Equation \ref{eqn:rho_gammarho_i} to see
\[
\sigma_j (\rho_j)^*(\phi_{i,j})^* ((\rho_i)^*)^{-1} \simeq \sigma_i(\rho_\gamma)^* (\phi_{i,j})^* ((\rho_\gamma)^*)^{-1}.
\] 
Now we use the fact that, since $\phi_{i,j}$ and $\rho_\gamma$ are tangle isotopies affecting disjoint parts of the planar diagrams involved ($\phi_{i,j}$ turns $\delta_i$ into $\delta_j$, while $\rho_\gamma$ untwists $\gamma\cdot\F_n$), $\phi_{i,j}^*$ and $\rho_\gamma^*$ commute with no sign issue and thus
\[
\sigma_i(\rho_\gamma)^* (\phi_{i,j})^* ((\rho_\gamma)^*)^{-1} \simeq \sigma_i(\phi_{i,j})^*(\rho_\gamma)^* ((\rho_\gamma)^*)^{-1} \simeq \sigma_i\phi_{i,j}^*.
\]
As in the proof of Corollary \ref{cor:Consistent Cone of Concats}, the fact that these are all one-term complexes means that all of these chain homotopies are actually equalities, and so the maps of Equation \ref{eqn:RozQuasiIdea} commute as desired.  All of this was true for any $\delta_i,\delta_j$ having $\th(\delta_i)=\th(\delta_j)=0$, and so we are done.
\end{proof}

Having Lemmas \ref{lem:ft Reid shifts} and \ref{lem:ft cone preserved} at our disposal, we can turn to the proof of Theorem \ref{thm:Fk to Fk+1}.  The strategy is to induct on $k$ as follows.  We view $\F_n^{k+1}$ as the concatenation $\F_n \cdot \F_n^k$ to view $\KC^*(\F_n^{k+1})$ as a multicone over the complex $C^*(\F_n^k)$.  We then use Corollary \ref{cor:Cone of Concats} to further simplify this multicone, resulting in our desired complex $C^*(\F_n^{k+1})$.  The homological degree formulas of Equation \ref{eqn:HomDeg in Multicone} will allow us to conclude that the top $2k$ degrees of $C^*(\F_n^{k+1})$ are determined by the top $2k$ degrees of $C^*(\F_n^k)$ alone, which are inductively guaranteed to fit the requirements of the complex $\C^*$ in Lemma \ref{lem:ft cone preserved}.

\begin{proof}[Proof of Theorem \ref{thm:Fk to Fk+1}]
We define the complex $C^*(\F_n^k)$ inductively.  The base case of $k=1$ is already defined in Theorem \ref{thm:single ft cx}.  Viewing $\F_n^{k+1}$ as the concatenation $\F_n \cdot \F_n^k$, we inductively simplify $\KC^*(\F_n^k)$ into $C^*(\F_n^k)$, which plays the role of $\C^*$ from the statement of Corollary \ref{cor:Cone of Concats}:
\begin{equation}\label{eqn:basic multicone for Fk+1}
\KC^*(\F_n^{k+1}) \simeq \mCone{\delta_i}{\delta_j}{C^*(\F_n^k)} {\braidAA[.15cm]{\F_n}{\delta_i}} {\left(\braidAAmap[.15cm]{I_{\F_n}}{\phi_{i,j}} \right)} {\braidAA[.15cm]{\F_n}{\delta_j}}.
\end{equation}
The isotopies $\rho_i:(\F_n \cdot \delta_i) \xrightarrow{\cong} (\mathcal{X}'_i)$ are provided by Lemma \ref{lem:ft Reid shifts} and so Corollary \ref{cor:Cone of Concats} gives
\begin{multline*}
\KC^*(\F_n^{k+1}) \simeq \\
\mConeFullhqs{\delta_i}{\delta_j}{C^*(\F_n^k)} {\braidAAA[.2cm]{\topp{\delta_i}}{\F_{n-2m_i}}{\bott{\delta^i}}} {(\phi_{i,j}')} {\braidAAA[.2cm]{\topp{\delta_j}}{\F_{n-2m_j}}{\bott{\delta^j}}} {a_i}{b_i}{a_j}{b_j}
\end{multline*}
where the shifts $a_i,b_i,a_j,b_j$ can be computed from Lemma \ref{lem:ft Reid shifts} depending on $m_i,m_j$.  We now utilize Theorem \ref{thm:single ft cx}, which tells us that $\KC^*(\F_{n-2m_i})\simeq C^*(\F_{n-2m_i})$, to simplify our multicone further.  The resulting complex is our definition for $C^*(\F_n^{k+1})$ which satisfies item \eqref{it:equiv to FTk} of Theorem \ref{thm:Fk to Fk+1} by construction.  If we allow a slight abuse of notation, the general statement of Proposition \ref{prop:gen multicone equiv} allows us to write $C^*(\F_n^{k+1})$ as
\begin{multline*}
\KC^*(\F_n^{k+1}) \simeq C^*(\F_n^{k+1}):= \\
\mConeFullNOKChqs{\delta_i}{\delta_j}{C^*(\F_n^k)} {\braidAAA[.33cm]{\topp{\delta_i}}{C^*(\F_{n-2m_i})}{\bott{\delta^i}}} {(\phi_{i,j}'')} {\braidAAA[.33cm]{\topp{\delta_j}}{C^*(\F_{n-2m_j})}{\bott{\delta^j}}} {a_i}{b_i}{a_j}{b_j}
\end{multline*}
where we have written $\phi_{i,j}''$ to indicate that our maps $\phi_{i,j}'$ may have changed yet again while utilizing the homotopy equivalences $\KC^*(\F_{n-2m_i})\simeq C^*(\F_{n-2m_i})$.

Now we investigate the homological grading in this multicone.  Since $\topp{\delta_i}$ and $\bott{\delta^i}$ have no crossings, any diagram $\epsilon\in C^*(\F_n^{k+1})$ coming from $\delta_i\in C^*(\F_n^k)$ with $\th(\delta_i)=m_i$ can actually be viewed as coming from the corresponding $C^*(\F_{n-2m_i})$ in the simplified multicone.  In this way the homological degree of such an $\epsilon$ can be computed (with the help of Lemma \ref{lem:ft Reid shifts}) as
\[\h_{C^*(\F_n^{k+1})}(\epsilon) = \h_{C^*(\F_{n-2m_i})}(\epsilon) + \h_{C^*(\F_n^k)}(\delta_i) + 2m_i(n-m_i).\]
Theorem \ref{thm:single ft cx} tells us that such an $\epsilon$ must be a diagram with $\th(\epsilon)=(n-2m_i)-2\ell$ for some number of `new' matchings $\ell$, and with homological degree
\[\h_{C^*(\F_{n-2m_i})}(\epsilon) \leq 2\ell((n-2m_i)-\ell)\]
which quickly yields the inequality
\begin{equation}\label{eqn:gen hom bound for FTk+1 in terms of FTk}
\h_{C^*(\F_n^{k+1})}(\epsilon) \leq \h_{C^*(\F_n^k)}(\delta_i) + 2(m_i+\ell)(2p-(m_i+\ell)).
\end{equation}

We first show that $C^*(\F_n^{k+1})$ satisfies item \eqref{it:th deg zero} of the statement of the theorem.  Taking the shifts into account, this is equivalent to showing that $\h_{C^*(\F_n^{k+1})}(\epsilon)\leq 2(k+1)p^2$ and that $\th(\epsilon)\geq2$ implies $\h_{C^*(\F_n^{k+1})}(\epsilon)\leq 2(k+1)(p^2-1)$.

The first of these two inequalities follows from Equation \ref{eqn:gen hom bound for FTk+1 in terms of FTk} almost immediately using the inductive hypothesis together with the fact that $x(2p-x)$ is maximized when $x=p$:
\begin{align*}
\h_{C^*(\F_n^{k+1})}(\epsilon) &\leq 2kp^2 + 2p^2\\
 &= 2(k+1)p^2.
\end{align*}
The second inequality follows by recognizing that $\th(\epsilon)\geq 2\Longrightarrow \th(\delta_i)\geq 2$, so that we may inductively conclude that $\h_{C^*(\F_n^k)}(\delta_i)\leq 2k(p^2-1)$ in this case.  From here, we note that $\th(\epsilon)\geq 2$ also indicates $x=m_i+\ell\leq p-1$ in our analysis of $x(2p-x)$, giving us
\begin{align*}
\th(\epsilon)\geq 2 \quad\Longrightarrow\quad \h_{C^*(\F_n^{k+1})}(\epsilon) &\leq 2k(p^2-1) + 2(p-1)(p+1)\\
&= 2(kp^2-k+p^2-1)\\
&= 2(k+1)(p^2-1).
\end{align*}

Finally, to verify item \eqref{it:k to k+1}, we begin by considering the case when $\delta_i$ was \emph{not} in the top $2k$ homological degrees of $C^*(\F_n^k)$, ie
\[\h_{C^*(\F_n^k)}(\delta_i)\leq 2kp^2-2k.\]
In this case, Equation \ref{eqn:gen hom bound for FTk+1 in terms of FTk} quickly yields
\begin{align}
\h_{C^*(\F_n^{k+1})}(\epsilon) &\leq 2kp^2 -2k + 2p^2 \notag \\
&= 2(k+1)p^2 - 2k
\label{eqn:hom bound for FTk=FTk+1 pf}
\end{align}
where we've again used the fact that $x(2p-x)$ is maximized when $x=p$.

The homological bound in Equation \ref{eqn:hom bound for FTk=FTk+1 pf} applies only to the terms in the multicone coming from $\delta\in C^*(\F_n^k)$ with $h_{C^*(\F_n^k)}(\delta)\leq 2kp^2-2k$.  As such, if we are interested in the truncated complex $C^*_{> 2(k+1)p^2-2k}(\F_n^{k+1})$, it is enough to consider the multicone of Equation \ref{eqn:basic multicone for Fk+1} only for $\delta_i,\delta_j\in C^*_{> 2kp^2-2k}(\F_n^k)$.  By inductive assumption, the complex $\bar{\C}^*:=C^*_{> 2kp^2-2k}(\F_n^k)$ consists of only diagrams $\delta$ with $\th(\delta)=0$ and no disjoint circles.  For this reason, Lemma \ref{lem:ft cone preserved} applies when we simplify the truncated Equation \ref{eqn:basic multicone for Fk+1}:
\begin{align*}
\KC^*_{> 2(k+1)p^2-2k}(\F_n^{k+1})&\simeq \mCone{\delta_i}{\delta_j}{\bar{\C}^*} {\braidAA[.15cm]{\F_n}{\delta_i}} {\left(\braidAAmap[.15cm]{I_{\F_n}}{\phi_{i,j}}\right)} {\braidAA[.15cm]{\F_n}{\delta_j}} \\
&\simeq \h^{2p^2}\q^{2p(p+1)}\bar{\C}^*
\end{align*}
implying that 
\[C^*_{> 2(k+1)p^2-2k}(\F_n^{k+1}) = \h^{2p^2}\q^{2p(p+1)} \bar{\C}^*.\]
We then apply an overall shift of $\h^{-2(k+1)p^2}\q^{-2(k+1)p(p+1)}$ to both sides of this equivalence, shifting the designated homological truncation points according to our notational convention (see Definition \ref{def:notation list}), to verify item \eqref{it:k to k+1} in the statement of the theorem.
\end{proof}

\subsection{Proof of Theorem \ref{thm:inf twist complex}}
\begin{proof}[Proof of Theorem \ref{thm:inf twist complex}]
We define the complexes $C^*(k)$ to be shifted copies of the complexes $C^*(\F_n^k)$ of Theorem \ref{thm:Fk to Fk+1}:
\[\KCsimp(k):= \h^{-2kp^2}\q^{-2kp(p+1)} C^*(\F_n^k),\]
which are then chain homotopy equivalent to further shifts of the Khovanov complexes $KC^*(\F_n^k)$ as required (see the renormalization of Definition \ref{def:n- convention}).  Item \eqref{it:k to k+1} of Theorem \ref{thm:Fk to Fk+1} then guarantees that our truncated complexes include into each other
\[\KCsimp_{> -2}(1) \hookrightarrow \KCsimp_{>-4}(2) \hookrightarrow \cdots \hookrightarrow \KCsimp_{>-2k}(k)\hookrightarrow \cdots\]
as required.  Each complex in this sequence of inclusions satisfies item \eqref{it:th deg zero} of Theorem \ref{thm:Fk to Fk+1}, and therefore the limiting complex does as well.  Because the maps are inclusions, any truncation of the limiting complex matches the corresponding truncation in the sequence of inclusions, and so can be computed by simplifying and truncating a Khovanov complex of finite twists.
\end{proof}

Theorem \ref{thm:inf twist complex} gives us access to a well-defined chain complex in Bar-Natan's categorification of the Temperley-Lieb algebra to assign to an infinite twist on $n=2p$ strands.  Any finite set of chain terms and differentials for this complex can be computed, up to a grading shift, as the upper-most homological degrees of (a chain homotopy equivalent simplification of) $KC^*(\F_n^k)$ for large enough $k$.  If we imagine that our infinite twist (or perhaps, many infinite twists on various sets of strands) are closed up in some way as in Figure \ref{fig:L in Mr}, then in fact we have Khovanov complexes of genuine links (with high but \emph{finite} amounts of twisting), and we can pass to actual Khovanov chain groups and homology groups purely from Bar-Natan's setting (ie, without recourse to Khovanov's invariant for tangles \cite{Khov2}).  This is the plan for the following sections.

\begin{remark}\label{rmk:cat JW proj}
As described in Section \ref{sec:WRT}, our complex $C^*(\F_n^\infty)$ has graded Euler characteristic recovering the zero-weight Jones-Wenzl projector $P_{n,0}$.  One can also show that $C^*(\F_n^\infty)$ is an idempotent complex by an argument similar to the proof of Theorem \ref{thm:Fk to Fk+1}: one copy of $C^*(\F_n^\infty)$ is comprised of through-degree zero terms, all of which unwind any finite approximation of the second copy of $C^*(\F_n^\infty)$.  However, following \cite{Roz} we hesitate to call this complex a categorified projector without a similar construction for the other $P_{n,m}$ and a categorical analogue of how the various projectors give a decomposition of identity.

On the other hand, Ben Elias and Matt Hogancamp have a separate construction in \cite{CatDiagFT} for categorifying all of the Jones-Wenzl projectors together via categorical diagonalization \cite{CatDiag} of the full twist operator in the category of Soergel bimodules, and it seems likely that our construction here matches theirs for a specific choice of eigenmap.  See also \cite{CoopHog}.
\end{remark}

\section{Defining Khovanov homology for links in $\#^r(S^2\times S^1)$}
\label{sec:Defining general Kh}
\subsection{Links in $\#^r(S^2\times S^1)$}
Let $M^r=\#^r(S^2\times S^1)$.  A link $\L$ in $M^r$ with $\ell$ components is a (smooth) embedding $\coprod_\ell S^1 \hookrightarrow M^r$.  As discussed in the intro, we build $M^r$ by performing $r$ $S^0$-surgeries on $S^3$.  We then draw $M^r$ as a copy of $S^3$ together with its embedded attaching spheres $(S^0\times S^2)_i$ for $i=1,\dots,r$.  Each single pair $(S^0\times S^2)_i$ is drawn as two oppositely oriented spheres with a dashed line (called a \emph{surgery line}, and denoted by $\sl_i$) drawn between them.  Rather than trying to envision the handle $(D^1\times S^2)_i$ that connects these two spheres in $M^r$, we imagine the spheres as teleportation points - a traveler in our diagram for $M^r$ who touches one sphere is immediately teleported to the `mirror image' point on the corresponding sphere (coordinates $(x,y,z)$ from the center of one sphere correspond to coordinates $(x,y,-z)$ from the center of the other).  In this way, an oriented link $\L\subset M^r$ can be drawn as a planar diagram $L$ with crossings as usual, except that the link is allowed to intersect any of the $(S^0\times S^2)_i$ in mirrored points.

To analyze such link diagrams in detail, we fix our diagram for $M^r$ by placing our attaching spheres $(S^0\times S^2)_i$ and surgery lines $\sl_i$ so that the planar projection $P'$ of $M^r$ is as shown in Figure \ref{fig:P' def}.

\begin{figure}
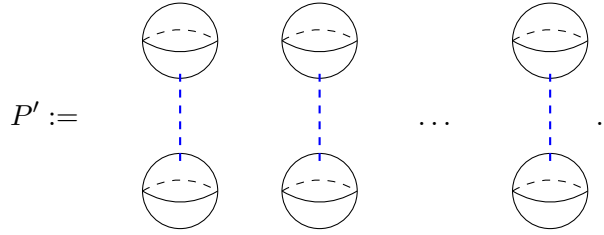

\[P' := \hspace{.25in} \SingleSurgSph \hspace{.25in} \SingleSurgSph \hspace{.25in} \dots \hspace{.25in} \SingleSurgSph.\]
\caption{$P'$ denotes the fixed diagram for the planar projection of $M^r$.  Our diagrams for links in $M^r$ will be drawn in $P'$.}
\label{fig:P' def}
\end{figure}

\begin{definition}\label{def:L standard position}
We say a link $\L\subset M^r$ is in \emph{standard position} if the following conditions are satisfied for each $i=1,\dots,r$:
\begin{itemize}
\item $\L\cap\sl_i=\emptyset$, that is, $\L$ does not intersect any of the surgery lines.
\item $\L\pitchfork\SSi$, that is, $\L$ intersects each attaching sphere $\SSi$ transversely.
\item The intersection points $\L\cap\SSi$ are arranged in a `straight' line on each sphere as indicated in the far right diagram of Figure \ref{fig:mirror points st pos}.  This arrangement shall also be referred to as \emph{standard position} for the intersection points.
\end{itemize}
Given such an $\L$, we let $n_i$ denote the geometric intersection number of $\L$ with the belt sphere of the handle $(D^1\times S^2)$, so that $\L\cap\SSi$ is the collection of $n_i$ pairs of mirrored points on the two spheres.
\end{definition}

The first two conditions of Definition \ref{def:L standard position} are clearly generic.  Furthermore, Figure \ref{fig:mirror points st pos} shows how we can further isotope any generic $\L$ into standard position by moving the intersection points of $\L\cap\SSi$ one at a time towards their chosen targets (generically missing the other intersection points).  Of course, this involves choices that we will consider further below.


\begin{figure}
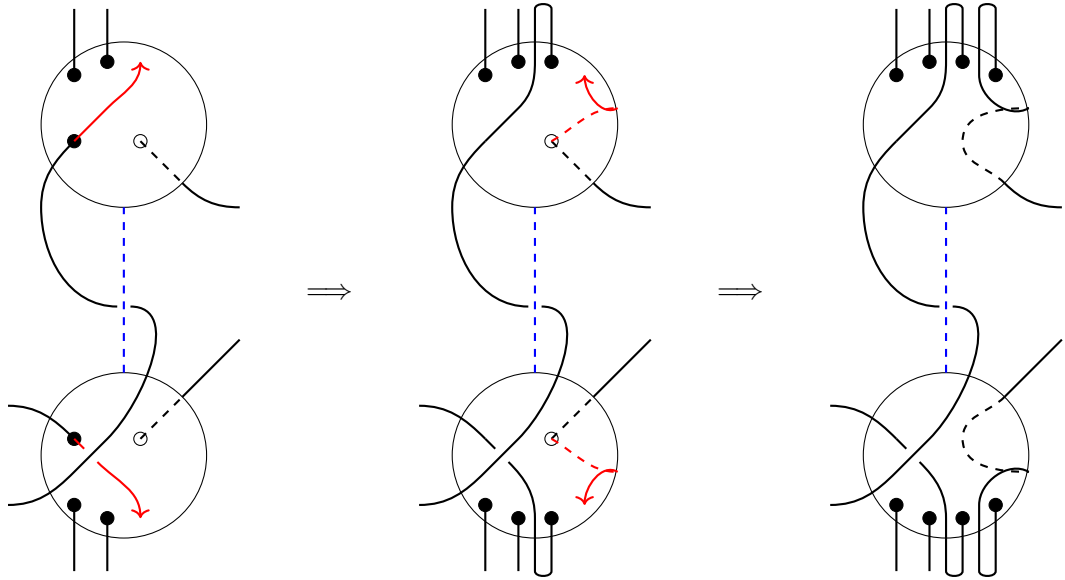

\[
\LintoStandardA \hspRarrow{.2in} \LintoStandardB \hspRarrow{.2in} \LintoStandardC
\]
\caption{We choose a path for each intersection point to follow along the sphere so that the mirrored points occur in a `straight' line, which we refer to as standard position for the points.  We then isotope $\L$ near each $(S^0\times S^2)_i$ by following along these paths, one at a time, and `tugging' the strand outward once the point has reached standard position, as illustrated in the sequence above.  The depths of the spheres are omitted for clarity.  The open circle indicates an intersection point on the `back' side of the sphere.  Note that there are many ordered sets of paths to arrange such points into standard position, but the resulting diagrams are all connected by isotopies of $\L$ in $M^r$ generated by the mapping class group of the $n_i$-punctured sphere.}
\label{fig:mirror points st pos}
\end{figure}

Given a link $\L\subset M^r$, we first isotope $\L$ so that it is in standard position, and then we project down to $P'$ and call the resulting diagram $L\subset P'$.  We then further isotope $\L$ as needed so that $L$ is also in general position.  That is, strands of $L$ miss the projections of the intersection points $\SSi\cap\L$ and $\SSi\cap\sl_i$, and the entire diagram contains no cusps or triple points of intersection involving any combination of strands of $L$, surgery lines $\sl_i$, and `borders' of attaching spheres $\SSi$.  We record over- and under-crossing data between strands of $L$ and/or surgery lines $\sl_i$ in the usual way; we also record whether or not strands of $L$ are `over' (respectively `under') any attaching sphere with solid (respectively dashed) lines.  Then $L$ is called a \emph{link diagram} for $\L$.


\begin{proposition}\label{prop:isotopic links}
Two link diagrams $L_1$ and $L_2$ drawn in $P'$ depict isotopic links in $M^r$ if and only if $L_1$ and $L_2$ are related by planar isotopies in $P'$, together with the following types of moves:
\begin{itemize}
\item \emph{Reidemeister moves} in $P'$, involving crossings of $L$ alone or crossings of $L$ with the fixed surgery lines and sphere borders.  See Figures \ref{fig:sphere Reid2 moves} and especially \ref{fig:sphere Reid3 moves}.
\item \emph{Mirror moves} - given any element $\beta$ in the braid group on $n_i$ strands, a copy of $\beta$ can be inserted adjacent to one sphere in the pair $(S^0\times S^2)_i$, together with a copy of $\beta^{-1}$ adjacent to the other sphere (see Figure \ref{fig:mirror move}).
\item \emph{Finger moves} - see the local picture of Figure \ref{fig:finger move}; finger moves at other points of the sphere can be moved to this point, up to possible braid group elements as described above.
\item \emph{Point-pass moves} - see the local picture of Figure \ref{fig:point-pass move}, where strands can pass either above or below any intersection point $\L\cap\SSi$ or $\sl_i\cap\SSi$ in accordance with whether the strand is above or below the sphere.
\item \emph{The surgery-wrap move} - we allow a disjoint nearby strand of $L$ to `wrap around' any dashed surgery-line, as illustrated in Figure \ref{fig:surgery-wrap move}.
\end{itemize}
\end{proposition}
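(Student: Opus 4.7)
The plan is to establish both directions of the equivalence: the forward direction, that each listed move is realized by an ambient isotopy, by direct geometric inspection; and the converse, that every ambient isotopy of a link in $M^r$ decomposes into a finite sequence of the listed moves, by a Cerf-type transversality argument on a generic one-parameter family of diagrams.

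For the forward direction, ordinary Reidemeister moves and planar isotopies are standard. Reidemeister moves involving sphere boundaries or surgery lines as ``fixed strands'' follow because the attaching spheres $\SSi$ and chosen surgery lines $\sl_i$ project to fixed subsets of $P'$ that may locally be treated as arcs transverse to $L$. A mirror move is realized by rotating the handle $(D^1\times S^2)_i$ through a braid word $\beta$ on the $n_i$ intersection points: this rotation is an ambient isotopy of $M^r$ supported near the handle, which on the diagram inserts $\beta$ adjacent to one of the two spheres and $\beta^{-1}$ adjacent to the other. Finger moves correspond to pushing a small unknotted arc of $\L$ through a handle, producing a new pair of mirrored intersection points. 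Point-pass moves follow from three-dimensional freedom, since the strand lies strictly above or below the sphere while the intersection point lies on the sphere and the diagram records only this crossing datum. Finally, surgery-wrap moves reflect that a nearby disjoint strand can be isotoped around the handle in either direction without changing the link, once one remembers that the dashed line $\sl_i$ is merely a projection artifact recording teleportation between the two spheres.

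For the converse direction, I would take an ambient isotopy $\L_t\subset M^r$, $t\in[0,1]$, with both $\L_0$ and $\L_1$ in standard position, and perturb it to be generic with respect to the fixed data $P'$, $\{\SSi\}$, and $\{\sl_i\}$. Outside a finite set of catastrophic times $0<t_1<\cdots<t_N<1$, the associated family of diagrams $L_t$ is constant up to a planar isotopy preserving all the fixed structures. At each $t_k$ the family undergoes a codimension-one singularity, which I would classify into the following types: (i) a tangency or triple point among strands of $L$, giving a Reidemeister move; (ii) a tangency of a strand of $L$ with a sphere boundary or surgery line, giving a Reidemeister move with a fixed strand; (iii) a collision of two points of $\L\cap\SSi$ on a sphere, or a small finger of $\L$ pushing through the handle, giving a finger move; (iv) a strand crossing through an intersection point of $\L\cap\SSi$ or $\sl_i\cap\SSi$ in the projection, giving a point-pass move; (v) a strand sliding around the handle in three-space, giving a surgery-wrap move. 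Concatenating the local moves at each $t_k$ with standard-position rearrangements on either side produces a finite sequence relating $L_1$ to $L_2$.

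The main obstacle will be the bookkeeping for the non-uniqueness of the ``standard position'' convention. After a finger move or any rearrangement of the points of $\L\cap\SSi$, the intersection points must be re-arranged into the fixed linear configuration via a choice of paths as in Figure~\ref{fig:mirror points st pos}, and different choices of such path systems differ by an element of the mapping class group of the $n_i$-punctured sphere. Showing that any two such rearrangements are related by the listed mirror and surgery-wrap moves amounts to verifying that a set of generators for that mapping class group, for instance the elementary half-twists on adjacent punctures, are each realized by the allowed moves, and that the relations among them come from isotopies that either take place entirely in the plane or are realized by the handle rotations of the type appearing in the mirror moves. Once this identification is in place, the elementary moves produced by the catastrophic events at each $t_k$ assemble into the claimed sequence relating $L_1$ and $L_2$.
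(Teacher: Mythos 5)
Your proposal takes essentially the same route as the paper's proof: analyze a generic one-parameter family of diagrams connecting two standard-position representatives, classify the codimension-one singularities into Reidemeister/finger/point-pass/surgery-wrap events, and account for the non-uniqueness of standard position via the mapping class group of the $n_i$-punctured sphere, realized by mirror moves (with north-pole-crossing isotopies producing Jucys--Murphy elements, themselves braid words). The paper does not invoke Cerf theory by name but carries out the same singularity analysis, adding two small checks you omit --- that positive and negative surgery-wraps are equivalent up to Reidemeister moves, and that a certain apparent point-pass configuration (Figure~\ref{fig:non-point-pass}) is disallowed because it would force $\L$ to pierce an attaching sphere --- neither of which creates a gap in your outline.
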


\begin{figure}
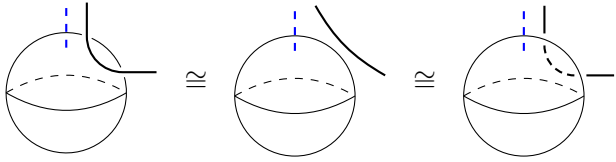

\[
\SphereReidTwoA \cong \SphereReidTwoB \cong \SphereReidTwoC
\]
\caption{Reidemeister 2 moves involving $L$ and sphere borders.  The intersection points $\L\cap\SSi$ are omitted for clarity.}
\label{fig:sphere Reid2 moves}
\end{figure}

\begin{figure}
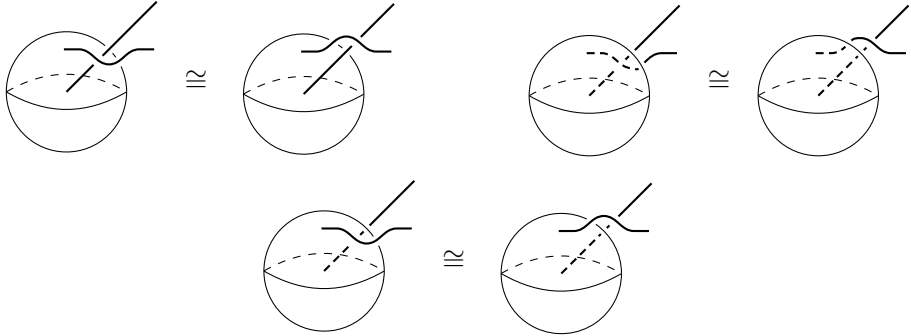

\[
\SphereReidThreeA \cong \SphereReidThreeB \hspace{.5in} \SphereReidThreeC \cong \SphereReidThreeD
\]
\[
\SphereReidThreeE \cong \SphereReidThreeF
\]
\caption{Some examples of Reidemeister 3 moves involving $L$ and sphere borders are shown here.  The intersection points $\L\cap\SSi$ and the surgery line $\sl_i$ are omitted.  Note that, since Reidemeister 3 moves always involve moving an `entire crossing' either above or below a third strand, we will always maintain which strands are over/under the sphere.}
\label{fig:sphere Reid3 moves}
\end{figure}

\begin{figure}
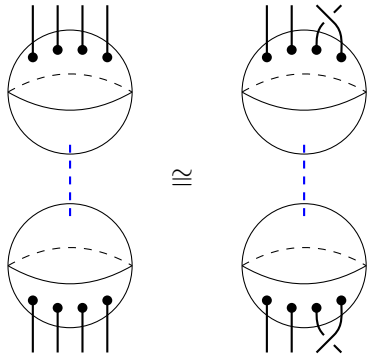

\centering
\mirrormoveA $\cong$ \mirrormoveB
\caption{An example of a mirror move, with $\beta=\sigma_3$.}
\label{fig:mirror move}
\end{figure}

\begin{figure}
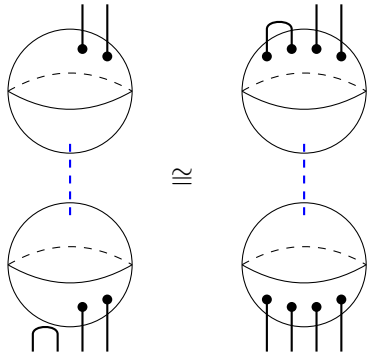

\centering
\fingermoveA $\cong$ \fingermoveB
\caption{The finger move, by definition.  We envision pushing the matching through the attaching sphere and out to the other side.  Naturally, there is a similar move given by reflecting this picture about the horizontal axis.}
\label{fig:finger move}
\end{figure}

\begin{figure}
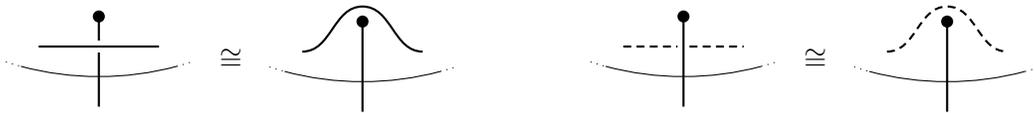

\[
\PointPassA \cong \PointPassB \hspace{.5in} \PointPassC \cong \PointPassD
\]
\caption{The point-pass moves for $\L\cap\SSi$, by definition.  A strand over (respectively under) the sphere can pass over (respectively under) any intersection point in $\L\cap\SSi$.  There are similar moves for $L$ passing over or under the points $\sl_i\cap\SSi$, as well as for the horizontal reflections of these pictures.}
\label{fig:point-pass move}
\end{figure}


\begin{figure}
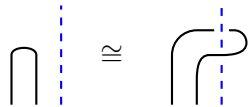

\centering
\disjointdash $\cong$ \dashwrap
\caption{The surgery-wrap move, by definition.}
\label{fig:surgery-wrap move}
\end{figure}

\begin{proof}
As usual, all of our choices in producing $L$ from $\L$ involved isotopies.  We begin with our choice of putting $\L$ into standard position, starting from the `linear' requirement for the intersection points.  This involves a choice of a path for each point of intersection towards a designated point on the sphere (missing the other intersection points).  Any two such choices are connected by an element of the mapping class group of the sphere.  Thus there are isotopies which miss the north pole which correspond to the braid group acting on the $n_i$ strands entering $\SSi$, and there are isotopies which `cross' the north pole and correspond to a wrapping-like move creating a Jucys-Murphy element $\E_{n_i}^{\pm 1}$.  Of course, such elements are themselves generated by the braid group, and so we need only consider braid-like isotopies of the $n_i$ strands.  These are generated by the mirror moves (since any isotopy on the surface of one sphere of the pair $\SSi$ must be mirrored on the other).

Tangencies of $\L$ with some $\SSi$ correspond to pushing a strand `through' the attaching sphere and out to the other side, which can be realized by a finger move up to a new choice of arranging the intersection points into standard position (so, up to further mirror moves).  Meanwhile, points of intersection of $\L\cap\sl_i$ correspond to passing a strand of $L$ `through' a surgery line $\sl_i$.  This move is generated by the surgery wrap move together with a Reidemeister 2 move as illustrated below:
\[
\underdash \cong \underoverdashwrap \cong \overdash.
\]

We now analyze possible singularities in the planar diagram during an isotopy of $\L$.  Cusps, tangencies, and triple points of intersections of $L$ correspond to Reidemeister moves.  Having fixed our $\SSi$ and $\sl_i$, there are no cusps of sphere borders or surgery lines.  A triple point involving strand(s) of $L$, a sphere border, and/or a surgery line corresponds to a Reidemeister 3 move involving those objects.  A tangency between a strand of $L$ and a sphere border corresponds to a Reidemeister 2 move between them.  A tangency between a strand of $L$ and a single $\sl_i$ can correspond to either a Reidemeister 2 move, or a surgery-wrap move.  One might worry that there should be two versions of the surgery-wrap move corresponding to right-handed or left-handed twisting, but in fact in our unoriented diagrams these are equivalent via other Reidemeister moves:
\[
\disjointdash \cong \negdashwrapB \cong \negdashwrapC \cong \negdashwrapD \cong \negdashwrapE .
\]
Of course this leads to two different versions once orientations are considered, which will lead to slightly different effects on the Khovanov homology further below.

Finally, the possibility of a strand of $L$ passing through an intersection point of $\L\cap\SSi$ or $\sl_i\cap\SSi$ corresponds to a point-pass move.  Notice that, since $\L$ may not pass through the sphere without appearing on the corresponding mirrored sphere, the move illustrated in Figure \ref{fig:non-point-pass} is disallowed (and similarly for the corresponding picture involving the point $\sl_i\cap\SSi$).

\end{proof}

\begin{figure}
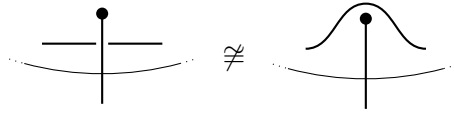

\[
\nonPointPass \not\cong \PointPassB
\]
\caption{This move, which may look similar to a point-pass move, is not allowed for link diagrams $L$ in $P'$.  It would require $\L$ to either pass through itself, or pass through the attaching sphere and appear on the corresponding mirrored sphere.  There is a similar disallowed move for a strand over the sphere passing `under' the point $\sl_i\cap\SSi$.}
\label{fig:non-point-pass}
\end{figure}


\begin{remark}\label{rmk:other wrapping moves}
Note that the moves listed in Proposition \ref{prop:isotopic links} allow for some perhaps more conventional moves that one might expect, such as various forms of `sphere-wrapping' that are combinations of point-passing and surgery-wrapping.  See Figure \ref{fig:other wrapping moves}.
\end{remark}

\begin{figure}
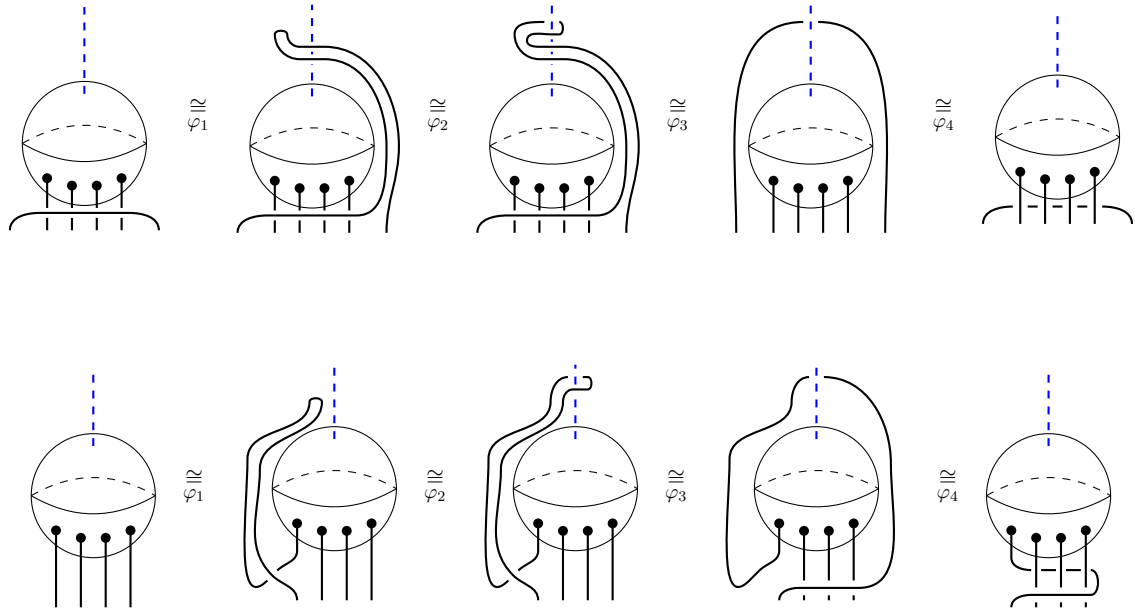

\[
\AltSpherePassA \stackrel{\cong}{\scalebox{.7}{$\varphi_1$}} \AltSpherePassB \stackrel{\cong}{\scalebox{.7}{$\varphi_2$}} \AltSpherePassC \stackrel{\cong}{\scalebox{.7}{$\varphi_3$}} \AltSpherePassD \stackrel{\cong}{\scalebox{.7}{$\varphi_4$}} \AltSpherePassE
\]
\vspace{.5in}
\[
\AltWrapA \stackrel{\cong}{\scalebox{.7}{$\varphi_1$}} \AltWrapB \stackrel{\cong}{\scalebox{.7}{$\varphi_2$}} \AltWrapC \stackrel{\cong}{\scalebox{.7}{$\varphi_3$}} \AltWrapD \stackrel{\cong}{\scalebox{.7}{$\varphi_4$}} \AltWrapE
\]
\caption{Some other `wrapping-style' moves that are allowable using combinations of the moves listed in Proposition \ref{prop:isotopic links}.  In both sequences above, the isotopy $\varphi_1$ is accomplished with basic Reidemeister moves, while $\varphi_2$ is a surgery-wrap move.  $\varphi_3$ uses further Reidemeister moves and point-pass moves for passing over the sphere, and finally $\varphi_4$ uses Reidemeister moves and point-pass moves for passing under the sphere.  Note the ability to create a single, non-mirrored Jucys-Murphy element $\E_{n_i}$, which is the version Rozansky considers in \cite{Roz}.}
\label{fig:other wrapping moves}
\end{figure}



\subsection{Defining the Khovanov homology $Kh(L)$ of a link diagram $L\subset P'$}
\label{sec:def KC(L)}
Given an oriented link $\L\subset M^r$, we can derive an oriented link diagram $L\subset P'$ as described in the previous section.  In order to ease the notation of some grading shifts later, we (arbitrarily) orient our surgery lines.  When reading over the following notations, it is useful to remember that our goal is to convert our diagram $L$ into the diagram $\Lk$ containing many full twists in place of the surgery lines (see Figure \ref{fig:L in Mr}).

\begin{definition}\label{def:notations for surgery strands}
We collect the following notations for quantities involving the various attaching spheres and surgery lines here as a single definition.  All of the following definitions are taken over $i=1,\dots,r$, and assume that we are given a specific link diagram $L\subset P'$ representing $\L\subset M^r$.
\begin{itemize}
\item $\sl_i$ will denote the $i^\text{th}$ surgery line.
\item $n_i=2p_i$ is the geometric intersection number between $L$ and the belt sphere in the $i^\text{th}$ handle $(D^1\times S^2)_i$.
\item $n_i^+$ and $n_i^-$ denote the number of positive (respectively negative) crossings in a \emph{single copy of the full twist} on $n_i$ strands, assuming they are oriented in the same way as the strands of $L$ entering the attaching spheres $(S^0\times S^2)_i$.  In the notation of Section \ref{sec:Infinite twist} (see Definition \ref{def:notation list}), $n_i^-:=n_{\F_{n_i}}^-$.
\item $N_i:= 2n_i^- - n_i^+$.
\item $\eta_i$ will denote the difference between the number of strands of $L$ that enter $(S^0\times S^2)_i$ with the same orientation as that of $\sl_i$, and the number of strands that enter with the opposite orientation as $\sl_i$.  Thus $\eta_i$ is the algebraic intersection number of $L$ with the $i^{\text{th}}$ belt sphere, oriented according to the orientation of $\sl_i$.
\end{itemize}
See Figure \ref{fig:oriented L in Mr} for clarification.  Note that swapping the orientation of $\sl_i$ also swaps the sign of $\eta_i$.
\end{definition}

\begin{figure}
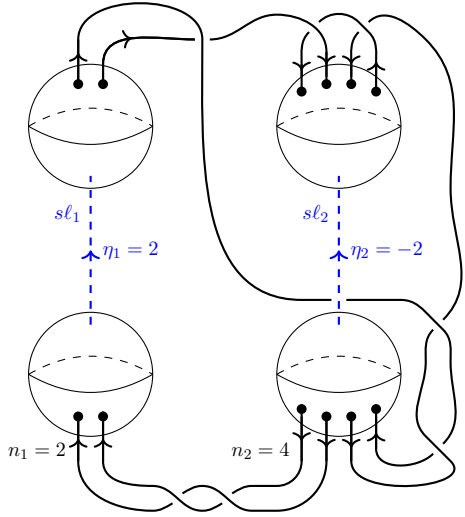

\centering
\LinMrNotations
\caption{The example of Figure \ref{fig:L in Mr} has been oriented, and the basic notations of Definition \ref{def:notations for surgery strands} are included.  The orientations of the $\sl_i$ are arbitrary; swapping such an orientation would swap the sign of the corresponding $\eta_i$.  The reader can consider oriented full twists in place of the surgery lines to verify that we have $n_1^+=2$ and $n_1^-=0$ so that $N_1=-2$, while $n_2^+=6$ and $n_2^-=6$ so that $N_2=6$.}
\label{fig:oriented L in Mr}
\end{figure}

\begin{definition}\label{def:Lk}
Given a link diagram $L\subset P'$ and a vector $\vec{k}=(k_1,\dots,k_r)\in \Z^r$, the symbol $\Lk$ denotes the oriented planar link diagram obtained from $L$ by performing the following steps:
\begin{enumerate}
\item Replace each $\sl_i$ with $n_i$ parallel copies of $\sl_i$ attached to $L$ at the intersection points of $L$ with $(S^0\times S^2)_i$.  These new strands are oriented to match the orientation of $L$.
\item Over/under crossing data for the new strands is inherited from the corresponding data from the $\sl_i$ and $\SSi$.  That is to say, if a strand of $L$ intersects a new strand at a point formerly along $\sl_i$, then $L$ is drawn over (respectively under) the new strand if $L$ was originally drawn over (respectively under) that point along $\sl_i$.  Similarly, if $L$ intersects a new strand at a point within the projection of $\SSi$, than $L$ is drawn over (respectively under) the new strand if $L$ was originally drawn over (respectively under) that sphere.
\item For each $i=1,\dots,r$ choose a point along $\sl_i$ that is not a crossing point, and insert the braid $\F_{n_i}^{k_i}$ into the set of $n_i$ parallel strands at that point.  Note that, in the case of $\vec{k}=\vec{0}$, this step does nothing.
\item Replace $P'$ with the standard plane $P$ (ie, erase or ignore the surgery spheres in the diagram).
\end{enumerate}
See Figure \ref{fig:L in Mr} for clarification.  We view $\Lk\subset P$ as a planar diagram for a link in $S^3$.
\end{definition}


\begin{definition}\label{def:KC(Linf)}
Given a link diagram $L\subset P'$, we construct the \emph{Khovanov complex of $L$}, denoted $KC^*(L)$, using the following steps:
\begin{enumerate}
\item Replace $L$ by the planar diagram $\Lzero$, viewed as a link in $S^3$.
\item For each $i=1,\dots,r$ choose a point along $\sl_i$ that is not a crossing point, and insert the complex $C^*(\F_{n_i}^\infty)$ of Theorem \ref{thm:inf twist complex} at that point.
\item Take the planar algebraic tensor product of these complexes, together with the Khovanov complexes coming from the crossings already present in the diagram $\Lzero$, in the sense of the tangle canopolies of \cite{BN}.
\end{enumerate}
See Figure \ref{fig:KC(Linf)} for clarification.  In a slight abuse of notation via Definition \ref{def:Lk}, we can think of this complex as $\KCsimp(\Linf)$.  Analogously, we define the \emph{finite approximation complex} $\KCsimp(\Lk)$ to be the (finite) complex obtained by following the same procedure, but placing the finite complex $\KCsimp(k_i)$ at each insertion point instead of $C^*(\F_{n_i}^\infty)$.

From here we define the \emph{Khovanov homology of $L$} $Kh(L)$ from $KC^*(L)$ by applying Khovanov's functor to complexes of modules over the ground ring and taking homology as usual.
\end{definition}

\begin{proposition}\label{prop:fin approx}
Given a link diagram $L\subset P'$ and an arbitrary homological lower bound $a$, there exists some finite $\vec{k}$ such that the truncated Khovanov complex $KC^*_{\geq a}(L)=\KCsimp_{\geq a}(\Linf)$ is equal to the truncation of the finite approximation complex $\KCsimp_{\geq a}(\Lk)$.  In other words, $KC^*(L)$ can be approximated by the truncation of a finite complex in any given homological range.  See Figure \ref{fig:KC(Linf)} for clarification.
\end{proposition}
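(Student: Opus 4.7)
The plan is to observe that, by Definitions \ref{def:Lk} and \ref{def:KC(Linf)}, both $KC^*(L) = \KCsimp(\Linf)$ and its finite approximation $\KCsimp(\Lk)$ are built as the \emph{same} planar algebra tensor product: a single finite Khovanov complex $B^*$ (coming from the crossings already present in $\Lzero$) tensored with $r$ twist complexes placed along the $r$ surgery lines, these twist complexes being either $C^*(\F_{n_i}^\infty)$ or $\KCsimp(k_i)$ depending on which version we are computing. Any chain summand of such a tensor product in total homological degree $N$ decomposes as a tensor product of individual summands of degrees $d_B, d_1, \dots, d_r$ summing to $N$, so the problem reduces to grading bookkeeping.

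First I would pin down a uniform upper bound on the homological degree contributed by each twist factor. By Theorem \ref{thm:inf twist complex}, every diagram $\delta \in C^*(\F_{n_i}^\infty)$ satisfies $\h(\delta) \leq 0$; the same bound holds on the nose for $\KCsimp(k_i)$, since the defining shift places all of its degrees in the interval $[-2k_i p_i^2, 0]$. Writing $b_{\max}$ for the largest homological degree occurring in the finite complex $B^*$, any tensor summand sitting in total degree $N \geq a$ must then satisfy $d_i \geq a - b_{\max}$ for each individual twist factor, since the remaining $r-1$ twist factors together with $B^*$ contribute at most $b_{\max}$.

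Next I would invoke Theorem \ref{thm:inf twist complex}(ii), which realizes $C^*(\F_{n_i}^\infty)$ as the honest union of the truncated subcomplexes $\KCsimp_{>-2k_i}(k_i)$. In particular, at each single degree $d_i$ the chain group of $C^*(\F_{n_i}^\infty)$ agrees on the nose with that of $\KCsimp(k_i)$ as soon as $d_i > -2k_i$. Choosing each $k_i$ large enough that $-2k_i < a - b_{\max}$, e.g.\ any $k_i > (b_{\max} - a)/2$, then forces $C^*(\F_{n_i}^\infty)^{d_i} = \KCsimp(k_i)^{d_i}$ throughout the entire range $d_i \geq a - b_{\max}$ identified above.

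Finally I would assemble the tensor products. With $\vec{k}$ chosen in this way, every summand of $\KCsimp(\Lk)$ whose total homological degree lies in the range $\geq a$ has each twist-factor component living in a range where the finite and infinite twist complexes agree identically, so the corresponding summands of $\KCsimp(\Lk)$ and $\KCsimp(\Linf)$ are literally equal. Because the Khovanov differential preserves or raises homological grading, the differentials on $KC^*_{\geq a}(L)$ and $\KCsimp_{\geq a}(\Lk)$ involve only summands within this agreed-upon range and hence also coincide. The only real obstacle here is the grading bookkeeping across the planar tensor product; the essential content of the argument is that the stable limit in Theorem \ref{thm:inf twist complex} is built from genuine inclusions of subcomplexes rather than mere chain homotopy equivalences, which is what upgrades the truncated agreement to an agreement on the nose.
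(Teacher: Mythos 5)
Your proof is correct and follows the same underlying idea as the paper's argument: both exploit the planar tensor product structure together with the fact that the stable limit in Theorem~\ref{thm:inf twist complex} is built from honest inclusions of truncated subcomplexes. The difference is one of explicitness. The paper's proof asserts (somewhat informally) that the planar tensor product of homotopy colimits can be reorganized into a single homotopy colimit of truncated tensor products with truncation points $c_i \to -\infty$, and then simply picks $c_i < a$; it does not spell out why such $c_i$ exist. Your proposal supplies precisely this missing bookkeeping: by using the uniform bound $\h(\delta)\le 0$ on each twist factor together with the finite bound $b_{\max}$ on $B^*$, you deduce that any tensor summand in total degree $\ge a$ must have every twist factor in degree $\ge a - b_{\max}$, and then the defining inclusions of Theorem~\ref{thm:inf twist complex} force termwise equality of the factors (and hence of the differentials, since everything in sight is a subcomplex) once $k_i > (b_{\max}-a)/2$. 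This is a cleaner, more self-contained version of the argument than the one in the paper, and it correctly isolates the key point that the stabilization is via genuine inclusions rather than mere homotopy equivalences.
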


\begin{figure}
\begin{align*}
KC_{\geq a}^*\left( \scalebox{.8}{\orientedLinMr} \right) := KC_{\geq a}^* \left( \scalebox{.8}{ \orientedLkinMr{C^*(\F_2^\infty)}{C^*(\F_4^\infty)} }
\right)&\\
\vspace{.2in}
\\
= KC_{\geq a}^* \left( \scalebox{.8}{ \orientedLkinMr{\KCsimp(k_1)}{\KCsimp(k_2)} } \right)&\\
\\
\vspace{.2in}
\\
\simeq \KChq{-2k_1-2k_2}{-6k_1-6k_2}_{\geq b}^* \left( \scalebox{.8}{ \orientedLkinMr{C^*(\F_2^{k_1})}{C^*(\F_4^{k_2})} } \right)&
\end{align*}
\caption{A diagrammatic example illustrating Definition \ref{def:KC(Linf)} (the first line) and Proposition \ref{prop:fin approx} (the second line is the truncated finite approximation complex $\KCsimp_{\geq a}$ by definition) for the oriented link diagram $L$ of Figure \ref{fig:oriented L in Mr}.  The third line unpacks the homological shifts inherent in the definitions of the $\KCsimp(k_i)$ for this figure; note that such shifts also affect the truncation point.
}
\label{fig:KC(Linf)}
\end{figure}

Recall that $\KCsimp(k_i)$ is chain homotopy equivalent to the shifted Khovanov complex of $k_i$ full twists.  In particular, we can use Proposition \ref{prop:fin approx} to prove:
\begin{corollary}\label{cor:KC(Linf)=KC(Lk)}
Given a link diagram $L\subset P'$, the Khovanov homology $Kh^*(L)$ can be approximated in any finite range of homological degrees by the shifted Khovanov homology of a genuine link diagram $\Lk$ for some finite $\vec{k}$.  More precisely, given any homological bound $a$, there exists some $\vec{k}$ that depends on $a$ such that
\begin{equation}\label{eq:KC(Linf)=KC(Lk)}
Kh^*(L) \cong H^* \left( \KChq{\sum_i k_i(n_i^- - 2p_i^2)}{\sum_i k_i( N_i - 2p_i(p_i+1) ) } (\Lk) \right)
\end{equation}
in all homological gradings $*\geq a$.
\end{corollary}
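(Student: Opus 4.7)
The plan is to combine Proposition~\ref{prop:fin approx} with the chain homotopy equivalences from Theorem~\ref{thm:inf twist complex}, then pass to homology via Khovanov's functor. Fix the homological bound $a$. First I would apply Proposition~\ref{prop:fin approx} to select a vector $\vec{k}$ (componentwise large enough, depending on $a$ and on $L$) so that in the range $\ast \geq a$ we have an equality $KC^*_{\geq a}(L) = \KCsimp_{\geq a}(\Lk)$. Since the insertions $\KCsimp(k_i)$ sit in disjoint local regions of the planar diagram, each $k_i$ can be chosen independently until the corresponding local truncation stabilizes.

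Next I would replace each local factor $\KCsimp(k_i)$ inside the planar tensor product by its shifted Khovanov counterpart. By item (i) of Theorem~\ref{thm:inf twist complex}, we have a chain homotopy equivalence
\[
\KCsimp(k_i) \simeq \KChq{k_i(n_i^- - 2p_i^2)}{k_i(N_i - 2p_i(p_i+1))}^*(\F_{n_i}^{k_i}).
\]
Because these equivalences are supported inside small disks around each surgery-line insertion point, Bar-Natan's planar algebra framework lets us tensor them against the rest of the diagram without interference. Concatenating the resulting shifted twist complexes with the Khovanov complexes coming from the crossings of the ambient diagram assembles, by definition of the planar algebra tensor product, into $\KC^*(\Lk)$ with the combined overall shifts added up across the $r$ insertion sites. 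Here the additivity uses that homological and $q$-shifts of a planar tensor product of complexes equal the sums of the individual shifts.

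Finally I would apply Khovanov's functor to modules over the ground ring and take cohomology. Chain homotopy equivalence is preserved under this functor, as is the truncation in degrees $\ast \geq a$, so the homology of $KC^*(L)$ in this range agrees with $H^*$ of the shifted complex on the right-hand side of Equation~\ref{eq:KC(Linf)=KC(Lk)}. Since $a$ was arbitrary, this proves the corollary.

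The only real obstacle is bookkeeping: verifying that the grading shifts of Theorem~\ref{thm:inf twist complex}, the renormalization conventions of Definition~\ref{def:n- convention}, and the insertion of the local twist complexes into the ambient diagram all compose to give exactly the stated global shifts $\sum_i k_i(n_i^- - 2p_i^2)$ and $\sum_i k_i(N_i - 2p_i(p_i+1))$. Once one checks that the local shift of $\KCsimp(k_i)$ relative to $KC^*(\F_{n_i}^{k_i})$ matches the contribution from having $k_i$ full twists of $n_i$ strands oriented as they enter the attaching sphere, additivity over the $r$ insertion sites and compatibility with the planar tensor product give the displayed formula without further work.
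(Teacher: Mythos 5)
Your proposal follows essentially the same route as the paper's own proof: use Proposition~\ref{prop:fin approx} to reduce to a finite-twist truncation, invoke Theorem~\ref{thm:inf twist complex}(i) locally at each insertion point, sum the shifts across the $r$ sites via the planar algebra structure, and pass to homology. One small point worth making explicit, which the paper is careful about: the chain homotopy equivalence $\KCsimp(\Lk)\simeq\KChq{\sum_i k_i(n_i^--2p_i^2)}{\sum_i k_i(N_i-2p_i(p_i+1))}^*(\Lk)$ does \emph{not} descend to a chain homotopy equivalence of truncated complexes, so you cannot literally "preserve the truncation" under this equivalence as your phrasing suggests. What does survive is the isomorphism of homology groups in all degrees, which—combined with the fact that truncation only changes homology near the truncation point—yields the isomorphism on $Kh^*$ in the stated range. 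Your conclusion is correct, but the sentence asserting that truncation is preserved should be rephrased to say that the chain homotopy equivalence induces isomorphisms on homology, and that these agree with the homology of the truncated complexes away from the truncation boundary.
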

\begin{proof}[Proof of both Proposition \ref{prop:fin approx} and Corollary \ref{cor:KC(Linf)=KC(Lk)}]
By definition, $KC^*(L)$ is a planar tensor product of homotopy colimits of inclusions of truncated complexes, which can be viewed as a large stable homotopy colimit of truncations of the tensor products.  That is to say, there must exist some starting value $\vec{k_0}$ and truncation points $c_i\rightarrow-\infty$ allowing us to build $KC^*(L)$ via a limit of inclusions
\[\KCsimp_{>c_0}(L(\vec{k_0})) \hookrightarrow \KCsimp_{>c_1}(L(\vec{k_0}+\vec{1})) \hookrightarrow \cdots\]
where we are choosing to take the colimit `diagonally' increasing each entry of $\vec{k}$ by one at each step (this is only for ease of notation).  Thus, given the finite homological bound $a$, we can find $c_i<a$ allowing $\vec{k_o}+\vec{i}$ to satisfy the requirements of Proposition \ref{prop:fin approx}.

In this way we have $Kh^*(L) = H^*(\KCsimp(\Lk))$ for $*>a$.  Since each $\KCsimp(k_i)$ within $\KCsimp(\Lk)$ is chain homotopy equivalent to $\KChq{k_i(n_i^- - 2p^2)}{k_i(N_i - 2p_i(p_i+1))}^*(\F_{n_i}^{k_i})$ (Theorem \ref{thm:inf twist complex}), we have for the full tensor product
\[\KCsimp(\Lk) \simeq \KChq{\sum_i k_i(n_i^- - 2p^2)}{\sum_i k_i(N_i - 2p_i(p_i+1))}^*(\Lk).\]
Although these complexes may no longer be chain homotopic after truncation, their homology groups beyond the truncation point must remain isomorphic, which proves Corollary \ref{cor:KC(Linf)=KC(Lk)}.
\end{proof}

\subsection{Invariance of $Kh(\L)$ for links in $M^r$}\label{sec:general Kh invariance}
The goal of this section is to prove the following theorem.
\begin{theorem}\label{thm:Kh invariance}
Suppose $L_1$ and $L_2$ are two link diagrams in $P'$ representing isotopic links in $M^r$.  Then, up to grading shifts that depend only on the quantities $\eta_i$ for each surgery line, $Kh(L_1)\cong Kh(L_2)$.
\end{theorem}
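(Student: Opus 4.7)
The plan is to verify invariance of $Kh(L)$ under each of the five families of moves enumerated in Proposition \ref{prop:isotopic links}. In each case I would reduce to a statement about the finite approximation complex $\KCsimp(\Lk)$ from Definition \ref{def:KC(Linf)}, since by Proposition \ref{prop:fin approx} any finite range of homological degrees of $Kh(L)$ is computed by such a truncated complex. The strategy is then to push the move through the surgery regions using either standard Khovanov invariance (when the move is supported away from any twist) or the structural results about $C^*(\F_n^\infty)$ from Theorem \ref{thm:inf twist complex} (when the move interacts with a twist).

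For the Reidemeister moves in $P'$ (including those involving sphere borders shown in Figures \ref{fig:sphere Reid2 moves} and \ref{fig:sphere Reid3 moves}), and for the mirror moves of Figure \ref{fig:mirror move}, the argument is quick. Ordinary Reidemeister moves can be performed in $\Lk$ in a region disjoint from all inserted $\F_{n_i}^{k_i}$, so standard invariance of Khovanov homology for $\Lk\subset S^3$ applies, with the renormalization shifts tracked by Lemma \ref{lem:Reid grading shifts}. For mirror moves, the two inserted braids $\beta$ and $\beta^{-1}$ flank $\F_{n_i}^{k_i}$ in $\Lk$; since the full twist is central in the braid group there is a planar braid isotopy $\beta\,\F_{n_i}^{k_i}\,\beta^{-1}\cong \F_{n_i}^{k_i}$, yielding an equality of finite approximation complexes, and hence of the limits.

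The three substantive moves are the finger move, the point-pass moves, and the surgery-wrap move, all of which interact with the twist region. For the point-pass and surgery-wrap moves, the key ingredient is the conclusion of Theorem \ref{thm:inf twist complex} that every diagram appearing in $C^*(\F_n^\infty)$ has through-degree zero, so no strand of such a resolution connects the top of the twist to its bottom. Consequently each resolution provides a ``gap'' through which the ambient strand of $\Lk$ may be isotoped past (or through) the twist, and Corollary \ref{cor:Cone of Concats} assembles these term-wise planar isotopies into the required chain homotopy equivalence. For the finger move, locally one sees a matching pushed through an attaching sphere; on $\Lk$ this converts a matching sitting above $\F_{n_i}^{k_i}$ into a matching sitting on the opposite side of $\F_{n_i+2}^{k_i}$. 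Lemma \ref{lem:ft Reid shifts}, together with the stability statement of Theorem \ref{thm:Fk to Fk+1}, exhibits these two configurations as chain homotopy equivalent after a computable shift, which passes to the limit $k_i\to\infty$.

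The main obstacle I expect is the surgery-wrap move, which introduces $n_i$ new crossings between an ambient strand and the entire bundle of parallel strands running through the twist. The resulting homological and $q$-degree shifts depend on the orientation of the wrapping strand relative to the orientations of the strands inside the twist, which is precisely the information encoded by $\eta_i$. Carefully tracking these shifts via Lemma \ref{lem:Reid grading shifts}, and confirming that they assemble across all moves into a single overall shift depending only on the $\eta_i$ (rather than on the particular diagram), is the delicate piece of bookkeeping and is exactly what prevents Theorem \ref{thm:Kh invariance} from being an invariance statement on the nose. Once all five move types are verified, the theorem follows by chaining the equivalences along a sequence of moves relating $L_1$ to $L_2$ provided by Proposition \ref{prop:isotopic links}.
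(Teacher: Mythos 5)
Your overall strategy -- reduce to finite approximation complexes via Proposition \ref{prop:fin approx} and then verify invariance move-by-move against the list in Proposition \ref{prop:isotopic links} -- matches the paper, and your treatment of Reidemeister moves, mirror moves, insertion-point changes, and finger moves is essentially correct. However, there are two problems, one minor and one substantial.

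The minor one: point-pass moves do not actually require the through-degree-zero property or any multicone machinery. In the paper they are handled by a simple $S^3$ isotopy of $\Lk$ that slides a strand over or under the ``gluing point'' where the black strands of $L$ meet the blue parallel strands, and this isotopy manifestly preserves all of the shifting data $\{p_i, n_i^-, N_i, \eta_i\}$. No interaction with the twist region is needed, because the point-pass happens at the sphere rather than at the twist.

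The substantial gap is in the surgery-wrap move, which you correctly identify as the source of the $\eta_i$-dependent shift. Invoking Corollary \ref{cor:Cone of Concats} is not enough here. That corollary only tells you that, after isotoping each $\delta$ with the wrapping strand through its gap, the resulting complex is a multicone over \emph{some} maps $(\phi_{i,j}')^*$ between the shifted $\KC^*(\delta_i)$ -- it gives no control over what those maps are. To conclude that the wrapped complex is a shifted copy of the unwrapped one, you must show that the new maps agree with the originals up to a globally consistent choice of signs. That requires the much stronger Corollary \ref{cor:Consistent Cone of Concats}, whose hypotheses include that the isotopies $\rho_i$ induce very strong deformation retracts and that the maps $\rho_j \circ (I\cdot\phi_{i,j})$ and $\phi_{i,j}\circ\rho_i$ are isotopic as embedded cobordisms, together with a separate argument producing consistent signs. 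The paper packages this into Lemma \ref{lem:wrap cone preserved}, and the sign-consistency argument there in turn requires the new technical ingredient Lemma \ref{lem:circle wrap is identity}: the relevant cobordism difference is \emph{not} isotopic to the identity in $B^3\times[0,1]$ (see Remark \ref{rmk:JM not quasi-trivial?}, which points out that Rozansky's quasi-triviality claim for the Jucys--Murphy braid is problematic), but one can still show by a direct delooping computation that it induces the identity chain map. Without this, you have established only that the wrapped and unwrapped local complexes are homotopy equivalent to multicones built from the same chain objects, which is strictly weaker than the equivalence you need. This is precisely the delicate step flagged in Remark \ref{rmk:VSDR required}, and your proposal as written does not close it.
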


According to Proposition \ref{prop:isotopic links}, it is enough to show the isomorphism whenever $L_1$ and $L_2$ are related by one of the move types listed there (together with showing invariance of our choice of insertion points along the $\sl_i$).  According to Corollary \ref{cor:KC(Linf)=KC(Lk)}, the isomorphism can be checked in any finite homological range for shifted Khovanov complexes of genuine links $\Lk[1]$ and $\Lk[2]$ in $S^3$.  Note that, although any given required $k_i$ may be different for the diagrams $L_1$ and $L_2$, we can always take the maximum and thus assume that the vector $\vec{k}$ is the same for $\Lk[1]$ and $\Lk[2]$.  Thanks to this viewpoint, many of the move types of Proposition \ref{prop:isotopic links} are nearly immediate consequences of isotopies of links in $S^3$.  The only type of move that requires closer examination is the surgery-wrap move.  Perhaps unsurprisingly, it is this type of move that causes the grading shifts as we will see below.  The strategy for proving invariance under this move is very similar to the strategy for proving Theorem \ref{thm:Fk to Fk+1}.  Compare the following lemma to Lemma \ref{lem:ft Reid shifts} from Section \ref{sec:equiv k to k+1}.

\begin{lemma}\label{lem:wrap Reid shifts}
If $\delta$ is a Temperley-Lieb diagram with $\th(\delta)=0$, we have
\begin{equation}\label{eqn:wrap Reid shifts}
\KC^* \left( \dashwrapA{$\delta$} \right) \simeq \KChqs{n_i}{n_i}^* \left( \disjointdashA{$\delta$} \right)
\end{equation}
regardless of the orientation of any of the strands.  The tangle isotopy that accomplishes this utilizes only Reidemeister 2 moves that remove crossings.
\end{lemma}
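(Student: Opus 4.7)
The plan is to adapt the argument of Lemma \ref{lem:ft Reid shifts} to the present setting, where a full twist is replaced by a single wrap around the surgery line $\sl_i$. The hypothesis $\th(\delta)=0$ plays the same role here as it did there (and as it did in Lemma \ref{lem:slide thru E}): it guarantees that any strand of $\delta$ entering the region where the wrap occurs belongs to a matching, so that an innermost cap or cup encircling the wrap can be identified and pulled off via a sequence of Reidemeister 2 simplifications.

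First I would reduce to the finite approximation, where the surgery line is replaced by $n_i$ parallel strands (possibly with $\F_{n_i}^{k_i}$ inserted). In that picture the single wrap of $\delta$ contributes $2n_i$ crossings between the wrapping arc of $\delta$ and the parallel strands: $n_i$ as the arc passes under on one side and $n_i$ as it returns over on the other. Because all the moves will be strictly local to the wrap, the presence of $\F_{n_i}^{k_i}$ elsewhere is irrelevant and the argument passes to the limit defining $\KC^*$ automatically.

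The central step is then to slide the wrapping matching off the bundle of parallel strands one strand at a time. Each such slide is a single Reidemeister 2 move, and an R2 move removes exactly one positive and one negative crossing regardless of how the two participating strands are oriented; this is precisely why the asserted equivalence holds regardless of the orientation of any of the strands. After $n_i$ such R2 simplifications the matching is free of the wrap and the configuration is planar-isotopic to the disjoint one. Applying Lemma \ref{lem:Reid grading shifts} with $n_i$ negative crossings and $n_i$ positive crossings removed yields a homological shift of $\h^{n_i}$ and a $q$-shift of $\q^{2n_i - n_i}=\q^{n_i}$, producing the claimed $\h^{n_i}\q^{n_i}$ renormalization shift.

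The main subtlety to address carefully is identifying the innermost matching unambiguously, but this is handled by pulling $\delta$ tight (as in Figure \ref{fig:Thru Deg}), so that $\th(\delta)=0$ forces the arc adjacent to the wrap to close up into a cap or cup of $\delta$ rather than into a through-strand that would obstruct the slide; the rest of $\delta$ is topologically separated from the wrap by this matching and plays no role in the isotopy. I do not expect to need any of the very-strong-deformation-retract machinery of Corollary \ref{cor:Consistent Cone of Concats} at this stage, since here we are only asserting a chain homotopy equivalence of two one-term ``packages'' of complexes rather than tracking compatibility of maps within a multicone.
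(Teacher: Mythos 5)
Your argument is correct and takes the same route as the paper's: identify the innermost cup-sliding isotopy (available since $\th(\delta)=0$), observe that it consists only of Reidemeister 2 simplifications, note that each R2 move removes one positive and one negative crossing regardless of orientations, and apply Lemma \ref{lem:Reid grading shifts} to read off the shift $\h^{n_i}\q^{n_i}$---this is precisely the crossing count the paper ``leaves to the reader.'' Two small corrections to the framing: the wrapping arc is a strand of $L$ disjoint from $\delta$, not ``the wrapping arc of $\delta$'' (a Temperley--Lieb diagram has no crossings, so it cannot contain the wrap), and the lemma already concerns a single finite tangle, so the opening ``reduce to the finite approximation'' step is unnecessary; neither issue affects the validity of the crossing-count argument.
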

\begin{proof}
Since $\th(\delta)=0$, there is a clear innermost cup sliding isotopy of Reidemeister 2 simplifications as in Lemma \ref{lem:slide thru E} and the blue strands coming down from $\delta$ must come in oppositely oriented pairs.  From here, we count crossings as usual.  The details are left to the reader.
\end{proof}

In Section \ref{sec:equiv k to k+1}, Lemma \ref{lem:FT quasitrivial} provided an isotopy between two link cobordisms, implying that the chain maps induced by those two cobordisms were homotopic up to a sign.  This time around our two link cobordisms will not necessarily be isotopic as far as we can tell, but their `difference' up to isotopy will be a special sort of link cobordism whose induced chain map will be ignorable via the following lemma.

\begin{lemma}\label{lem:circle wrap is identity}
Let $\rho_\circ$ denote the tangle cobordism (embedded in $B^3\times[0,1]$) illustrated by the following movie of Reidemeister moves:
\[ \circlewrapIsoA \xrightarrow{\cong} \circlewrapIsoB \xrightarrow{\cong} \circlewrapIsoC \xrightarrow{\cong} \circlewrapIsoD \xrightarrow{\cong} \circlewrapIsoE\]
Then $\rho_\circ^*=I^*$, the identity map (ie the product cobordism in Bar-Natan's category of dotted cobordisms between planar diagrams) sending the disjoint circle to itself and the `wrapping strand' to itself.
\end{lemma}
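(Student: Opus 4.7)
The plan is to view $\rho_\circ$ as tracing out a genuine tangle cobordism in $B^3 \times [0,1]$ and compare it against the product cobordism, then invoke Proposition \ref{prop:dotted functoriality}.

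First I would note that both the initial and final frames of the movie depict the very same planar diagram $Z$ consisting of the disjoint circle together with the vertical wrapping strand, which has no crossings. Consequently $\KC^*(Z)$ is a one-term complex concentrated in homological degree zero, and any chain map $\KC^*(Z) \to \KC^*(Z)$ is determined by a single dotted cobordism from $Z$ to itself (modulo Bar-Natan's local relations). In particular, chain homotopy of maps between these complexes coincides with equality, so it suffices to identify $\rho_\circ^*$ with $I^*$ as a morphism in Bar-Natan's category.

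Second I would describe the surface $S_{\rho_\circ} \subset B^3 \times [0,1]$ that the movie traces out. Every step in the movie is a Reidemeister move (i.e.\ an ambient isotopy of the tangle) rather than a Morse move, so $S_{\rho_\circ}$ is swept out without any critical points. It is therefore the disjoint union of two annuli: one annulus corresponding to the circle component, which is literally the product annulus $S^1 \times [0,1]$ since the circle is held fixed throughout, and one annulus corresponding to the wrapping strand, obtained by tracing its isotopy. The topological content of the movie is that the strand first wraps halfway around the disjoint circle and then unwraps; this trajectory is null-homotopic in $B^3 \setminus (\text{circle})$, so the strand's annulus is isotopic rel boundary to the product annulus. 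Thus $S_{\rho_\circ}$ is isotopic rel boundary to the product cobordism $Z \times [0,1]$, which is by definition the cobordism inducing $I^*$.

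Finally, Proposition \ref{prop:dotted functoriality} applied to the isotopy $S_{\rho_\circ} \cong Z \times [0,1]$ yields $\rho_\circ^* \simeq \pm I^*$. Since both source and target are one-term complexes, this homotopy is an equality: $\rho_\circ^* = \pm I^*$. To pin down the sign, I would inspect the movie one step at a time: the Reidemeister 2 moves in the first half that introduce the wrapping crossings are cancelled by the Reidemeister 2 moves in the second half that remove them, and under Bar-Natan's standard sign conventions such cancelling pairs contribute $+1$; the remaining moves in the middle likewise pair up to contribute $+1$. The main obstacle is precisely this last sign check, since the topological isotopy to the product is straightforward and functoriality then does the heavy lifting. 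Ruling out the $-$ sign is largely bookkeeping, accomplished either by an explicit diagrammatic computation using the definitions of the Reidemeister maps in \cite{BN}, or by the structural observation that the cancelling R2 pairs in the movie assemble into compositions of the form $\rho^* \circ (\rho^*)^{-1}$.
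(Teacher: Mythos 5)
Your approach is genuinely different from the paper's, and it has a critical gap at precisely the point the author is trying to avoid. You argue that the surface $S_{\rho_\circ}$ is isotopic rel boundary to the product cobordism $Z\times[0,1]$, and then invoke Proposition~\ref{prop:dotted functoriality} (functoriality) to conclude $\rho_\circ^*=\pm I^*$. But that topological claim is exactly the assertion the author explicitly flags as doubtful. Remark~\ref{rmk:JM not quasi-trivial?} says Rozansky's original argument ``boils down to the assertion that [certain] cobordisms are themselves isotopic,'' that ``it seems unclear whether or not this is the case\dots in $B^3\times[0,1]$,'' and that ``although $\rho_\circ$ may not be isotopic to identity as an embedded cobordism in $B^3\times[0,1]$, it still induces an identity map on the chain level.'' In other words, Lemma~\ref{lem:circle wrap is identity} exists in this paper precisely as a \emph{work-around} for the gap in the isotopy claim you are making.

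The step in your argument where the gap occurs is the inference from ``the strand's trajectory is null-homotopic in $B^3\setminus(\text{circle})$'' to ``the strand's annulus is isotopic rel boundary to the product annulus'' (in the complement of the circle's product annulus in $B^3\times[0,1]$). Null-homotopy of a loop of arcs does not give an ambient isotopy of the swept-out annulus in a 4-manifold; for surfaces in dimension four homotopy and isotopy diverge, and the wrapping annulus here wraps around the fixed circle-annulus in a way that has no obvious ambient untangling rel both boundaries. Your argument would establish the conclusion if the isotopy held, but you have not established the isotopy, and the author of the paper believes it may fail.

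The paper's proof sidesteps this entirely: it writes $\rho_\circ^*=(a-b)\cdot(c-d)$ as an explicit composite of the four Reidemeister~2 chain maps (whose formulas involve births, deaths, saddles, and dots), observes that $a\cdot d=0$ because a disjoint sphere evaluates to zero, and expands the remaining three terms $a\cdot c$, $-b\cdot c$, $b\cdot d$ via Bar-Natan's neck-cutting relation. The dotted ``folded curtain'' terms cancel, and what is left is manifestly the identity morphism. This direct local computation proves the chain-level statement without ever needing the cobordism to be isotopic to the product cobordism. If you want to keep a functoriality-flavored proof, you would first need to actually establish the rel-boundary isotopy of embedded surfaces in $B^3\times[0,1]$, which the paper leaves as an open concern; absent that, the computational route is the one that closes the gap.
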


\begin{figure}
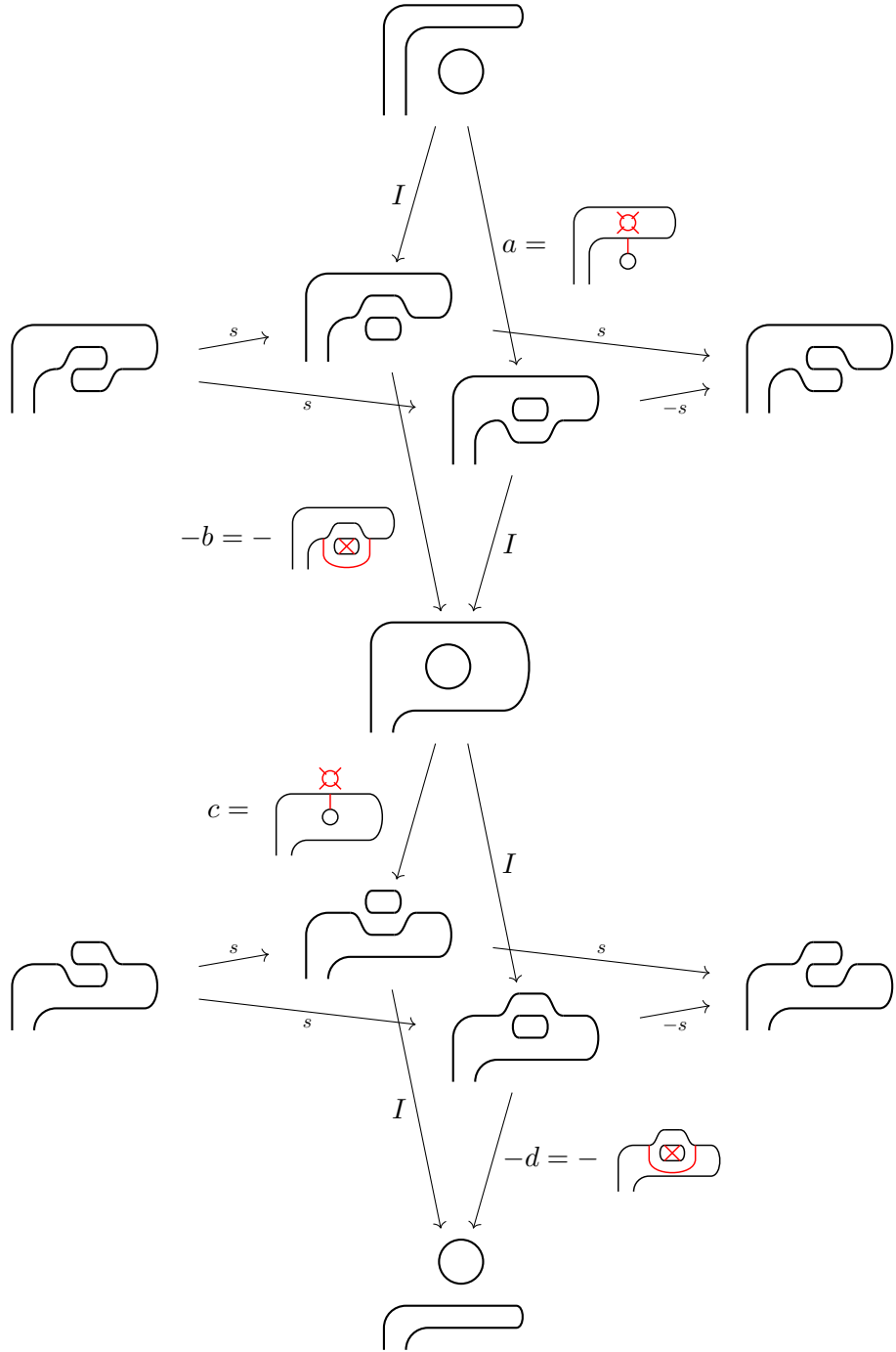

\centering
\circlewrapChainmaps
\caption{The chain maps corresponding to the Reidemeister movie $\rho_\circ$ of Lemma \ref{lem:circle wrap is identity}.  Identity maps are labelled $I$.  Saddle cobordisms are marked with red lines.  The maps $a$ and $c$ have circle births, while the maps $b$ and $d$ have circle deaths.}
\label{fig:circlewrapChainmaps}
\end{figure}

\begin{proof}
We write out the chain maps corresponding to the sequence of Reidemeister 2 moves indicated by $\rho_\circ$ in Figure \ref{fig:circlewrapChainmaps} (see \cite{BN} where these maps are defined and shown to be chain homotopy equivalences).  Utilizing the notation in that figure, together with the $\cdot$ notation for vertical concatenation as usual, the overall map $\rho_\circ^*$ can be computed as
\begin{align*}\rho_\circ^*&=(a-b)\cdot(c-d)\\
&= a\cdot c - a\cdot d - b\cdot c +b\cdot d.\end{align*}
The birth and death in $a \cdot d$ create a disjoint sphere, so that $a \cdot d=0$.  The other compositions are described below and expanded utilizing Bar-Natan's neck-cutting relation.
\begin{gather*}a\cdot c \quad = \quad \circlewrapACcobmap \quad = \quad \circlewrapDOTcobmap{1} + \circlewrapDOTcobmap{2}\\
-b\cdot c \quad = \quad -\circlewrapBCcobmap \quad = \quad -2 \circlewrapDOTcobmap{2}\\
b\cdot d \quad = \quad \circlewrapBDcobmap \quad = \quad \circlewrapDOTcobmap{2} + \circlewrapDOTcobmap{3}
\end{gather*}
After combining, the terms with a dot on the `folded curtain' all cancel, leaving a sum of two terms that is clearly equivalent (again via a neck-cutting relation) to an identity morphism taking the disjoint circle around to the other side of the curtain.
\end{proof}

\begin{lemma}\label{lem:wrap cone preserved}
If $\C^*$ is a chain complex made entirely of dotted cobordisms $\phi_{x,y}$ between diagrams $\delta_x,\delta_y$ all having through-degree zero \emph{and} having no disjoint circles, then abusing notation slightly we have
\begin{equation}\label{eqn:wrap cone preserved}
\mCone{\delta_x}{\delta_y}{\C^*} {\dashwrapA[.3cm]{$\delta_x$}} {\left(\dashwrapAmap{$\phi_{x,y}$}\right)} {\dashwrapA[.3cm]{$\delta_y$}} \simeq \h^{n_i}\q^{n_i}\left( \disjointdashA[.3cm]{$\C^*$} \right)
\end{equation}
regardless of the orientation of any of the strands.
\end{lemma}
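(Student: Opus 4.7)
The plan is to mirror the proof of Lemma \ref{lem:ft cone preserved} by applying Corollary \ref{cor:Consistent Cone of Concats} with the disjoint dashed line together with its wrapping strand playing the role of $\mathcal{X}$. For each $\delta_x \in \C^*$, Lemma \ref{lem:wrap Reid shifts} provides the required isotopy
\[\rho_x: \dashwrapA[.3cm]{$\delta_x$} \xrightarrow{\cong} \disjointdashA[.3cm]{$\delta_x$}\]
consisting entirely of Reidemeister 2 simplifications, which by Lemma \ref{lem:R1 and R2 are VSDR} lift to very strong deformation retracts on the associated renormalized Khovanov complexes. Since every $\delta_x$ has through-degree zero, the grading shift produced by $\rho_x$ is $(a,b) = (n_i, n_i)$ independent of $x$, verifying the first hypothesis of Corollary \ref{cor:Consistent Cone of Concats}. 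The orientation-independence of the final grading shift is inherited directly from Lemma \ref{lem:wrap Reid shifts}, so it requires no additional bookkeeping.

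For the second hypothesis, I would observe that each dotted cobordism $\phi_{x,y}$ is supported entirely within the matchings of $\delta_x$ and $\delta_y$, while the unwinding $\rho_x$ is supported entirely in the region containing the dashed line and the wrapping strand. Because $\th(\delta_x)=0$ and $\delta_x$ contains no disjoint circles, there is a clean spatial separation between these two regions, so the compositions $\rho_y \circ (I \cdot \phi_{x,y})$ and $\phi_{x,y} \circ \rho_x$ agree on the nose as tangle cobordisms in $B^3\times [0,1]$.

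The only remaining and most delicate point is sign consistency, since by Proposition \ref{prop:dotted functoriality} isotopic dotted cobordisms induce chain maps agreeing only up to a sign. This is precisely where Lemma \ref{lem:circle wrap is identity} takes the role played by Lemma \ref{lem:FT quasitrivial} in the proof of Lemma \ref{lem:ft cone preserved}: the isotopy $\rho_x$ decomposes as a sequence of local ``wrap past a cup'' Reidemeister 2 moves, each realized by an instance of the movie $\rho_\circ$. Since $\rho_\circ^* = I^*$ exactly, not merely up to sign, no sign discrepancies accumulate and the signs $\sigma_x$ required by Corollary \ref{cor:Consistent Cone of Concats} can all be chosen to be $+1$ directly, without any auxiliary closure diagram $\gamma$ of the sort used in Lemma \ref{lem:ft cone preserved}. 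This sign check is the main obstacle of the proof, and it is considerably cleaner here than in the full-twist case thanks to the \emph{strict} identity statement of Lemma \ref{lem:circle wrap is identity}. Combining these pieces with Corollary \ref{cor:Consistent Cone of Concats} then yields the stated equivalence with overall grading shift $\h^{n_i}\q^{n_i}$.
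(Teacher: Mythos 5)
Your high-level plan --- apply Corollary \ref{cor:Consistent Cone of Concats} using Lemma \ref{lem:wrap Reid shifts} for the isotopies, then appeal to Lemma \ref{lem:circle wrap is identity} for the sign question --- is the right shape, but the two intermediate steps you use to avoid the closure diagram $\gamma$ are both incorrect, and $\gamma$ is in fact essential to making the argument close.

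First, the claim of ``clean spatial separation'' so that $\rho_y \circ (I\cdot\phi_{x,y})$ and $\phi_{x,y}\circ\rho_x$ ``agree on the nose'' is false. The unwinding $\rho_x$ is a cup-sliding isotopy (Lemma \ref{lem:wrap Reid shifts}): it slides the matchings of $\delta_x$ one at a time through the wrapping strand. But those very matchings are exactly the carrier of the dotted cobordism $\phi_{x,y}$, so $\rho_x$ and $\phi_{x,y}$ overlap. Moreover $\rho_x$ and $\rho_y$ are genuinely different cobordisms (they slide the cups of different diagrams $\delta_x \neq \delta_y$), so the two compositions are only \emph{isotopic}, not equal. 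Note that even the analogous claim in the proof of Lemma \ref{lem:ft cone preserved} is stated as ``commute up to isotopy,'' not equality. If they agreed on the nose there would be no sign question at all, and you later contradict yourself by acknowledging one.

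Second, the assertion that ``$\rho_x$ decomposes as a sequence of local wrap-past-a-cup moves, each realized by an instance of $\rho_\circ$'' is not correct. The movie $\rho_\circ$ in Lemma \ref{lem:circle wrap is identity} is a \emph{closed loop}: it starts and ends with the same local picture, with a disjoint \emph{circle} having passed a strand. The cup-slide $\rho_x$ is an open retract from the wrapped picture to the unwrapped picture, and it involves \emph{cups} (arcs with boundary), not circles. So $\rho_x$ cannot be literally decomposed into copies of $\rho_\circ$, and the strict identity $\rho_\circ^* = I^*$ says nothing directly about $\rho_x^*$.

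This is precisely why the closure $\gamma$ cannot be discarded. Its two roles are: (1) it caps off the cups of $\delta_x$ to form genuine disjoint circles, so that the \emph{closed} composite $(\rho_x \cdot I_\gamma)\circ(I_{\delta_x}\cdot\rho_\gamma^{-1})$ --- which is a loop --- is isotopic to a sequence of $\rho_\circ$ moves and hence induces the identity chain map by Lemma \ref{lem:circle wrap is identity}; and (2) it supplies a single reference cobordism $\rho_\gamma$, independent of $x$, which commutes with every $\phi_{x,y}$ on the nose because $\rho_\gamma$ is supported on $\gamma$ (disjoint from the matchings of $\delta_x$). Both ingredients are needed to extract a consistent choice of signs $\sigma_x$ via the functoriality of Proposition \ref{prop:dotted functoriality}. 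Without $\gamma$ there is no fixed baseline to compare the various $\rho_x^*$ against, and no way to invoke Lemma \ref{lem:circle wrap is identity}, so the sign consistency is unverified.
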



\begin{proof}
The proof is identical in form to the proof of Lemma \ref{lem:ft cone preserved}, so we will be brief.  Because each of the $\delta_x$ have $\th(\delta_x)=0$ and no disjoint circles, Lemma \ref{lem:wrap Reid shifts} gives tangle isotopies
\[\rho_x: \dashwrapA[.3cm]{$\delta_x$} \xrightarrow{\cong} \disjointdashA[.3cm]{$\delta_x$}\]
which satisfy the requirements of Corollary \ref{cor:Consistent Cone of Concats}.  Meanwhile, after choosing a closure
\[\gamma:=
\begin{tikzpicture}[baseline={([yshift=-.7ex]current bounding box.center)},x=.25cm,y=.22cm]

\foreach \n in {3,5.5}
{
\draw[thick,blue]
(\n,0) -- (\n,-1) to[out=-90,in=-90] (\n+.5, -1) -- (\n+.5,0);
}
\draw[blue]
(4.6,-0.7) node[scale=.7] {\dots}
(4.5,.7) node[scale=.7] {$n_i$}
;
\end{tikzpicture}
\]
to concatenate on the \emph{bottom} of the strands coming from the $\sl_i$, we also have a consistent tangle isotopy
\[\rho_\gamma: \bottomlessdashwrapB{$\gamma$} \xrightarrow{\cong} \bottomlessdisjointdashB{$\gamma$}.\]
Just as in the proof of Lemma \ref{lem:FT quasitrivial}, there is an isotopy of cobordisms
\[\bottomlesswrapABmap{$\rho_x$}{$I_\gamma$} \cong \bottomlesswrapABmap{$\rho_x$}{$I_\gamma$} \circ \bottomlesswrapBAmap{$I_{\delta_x}$}{$\rho_\gamma^{-1} \circ \rho_\gamma$}\]
where we consider the composition $(\rho_x\cdot I_\gamma) \circ (I_{\delta_x} \cdot \rho_\gamma^{-1})$ separately.  The reader can verify that, since this cobordism is taking place after $(I_{\delta_x} \cdot \rho_\gamma)$ which effectively removes any crossings from the local picture, our composition $(\rho_x\cdot I_\gamma) \circ (I_{\delta_x} \cdot \rho_\gamma^{-1})$ is isotopic to a sequence of tangle cobordisms of the form $\rho_\circ$ as in Lemma \ref{lem:circle wrap is identity}, all of which induce identity maps on the chain level.  This allows us to use functoriality to select a consistent choice of signs $\sigma_x$ such that 
\[\sigma_x\left(\bottomlesswrapABmap{$\rho_x$}{$I_\gamma$}\right)^* \simeq \left(\bottomlesswrapBAmap{$I_{\delta_x}$}{$\rho_\gamma$}\right)^*\]
leading to a corresponding version of Equation \ref{eqn:rho_gammarho_i} for use in the bottom two layers of Figure \ref{fig:Consistent Cone of Concats proof}.  Just as in the proof of Lemma \ref{lem:ft cone preserved}, we have $\rho_\gamma$ and $\phi_{x,y}$ commuting because they affect disjoint parts of the tangle diagram, and the fact that we are dealing with only one-term complexes ensures that the homotopies between maps are trivial.
\end{proof}

Note that we have adjusted the subscript notation of the diagrams to involve $x$ and $y$ instead of $i$ and $j$ as in Section \ref{sec:Infinite twist} because $i$ is now being used to track which surgery line $\sl_i$ we are considering.

\begin{remark}\label{rmk:JM not quasi-trivial?}
In \cite{Roz}, Rozansky's version of this proof using quasi-triviality boils down to the assertion that the two cobordisms $(\rho_x\cdot I_\gamma)$ and $(I_{\delta_x} \cdot \rho_\gamma)$ are themselves isotopic.  It seems unclear whether or not this is the case for these cobordisms in $B^3\times [0,1]$, which is where functoriality for Bar-Natan's construction has been established (even if we close the other strands and view our cobordisms in $S^3\times[0,1]$, this isotopy seems unclear to the author; the 2-sphere swept out by the `wrapping strand' would split $S^3$ into two 3-balls, but both 3-balls contain other strands of the link).  The computation of Lemma \ref{lem:circle wrap is identity} here provides the work-around: although $\rho_\circ$ may not be isotopic to identity as an embedded cobordism in $B^3\times[0,1]$, it still induces an identity map on the chain level.
\end{remark}

\begin{definition}\label{def:pos/neg wrapping move}
We call a surgery-wrap \emph{positive} (respectively \emph{negative}) if the crossings involved between $L$ and the surgery-line are positive
(respectively negative).
\end{definition}

\begin{theorem}\label{thm:wrapping move shift}
Suppose a link diagram $L_2$ is obtained from another link diagram $L_1$ in $P'$ by introducing a positive surgery wrap along $\sl_i$.  Then 
\begin{equation}\label{eq:pos wrap shift}
Kh^*(L_2)\cong \Khhq{\eta_i}{3\eta_i}^*(L_1).
\end{equation}
Similarly, if $L_3$ is obtained from $L_1$ by introducing a negative surgery wrap along $\sl_i$, we have
\begin{equation}\label{eq:neg wrap shift}
Kh^*(L_3)\cong \Khhq{-\eta_i}{-3\eta_i}^*(L_1).
\end{equation}
\end{theorem}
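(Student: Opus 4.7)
The plan is to reduce Theorem \ref{thm:wrapping move shift} to a local application of Lemma \ref{lem:wrap cone preserved} inside a finite approximation, and then to convert the resulting renormalized grading shift into the standard Khovanov grading shift via Lemma \ref{lem:Reid grading shifts}. First I would invoke Corollary \ref{cor:KC(Linf)=KC(Lk)}: for any chosen homological bound $a$, we may choose $\vec{k}$ large enough (simultaneously for $L_1$ and $L_2$, or $L_3$) so that $Kh^*(L_j)$ agrees in the range $* \geq a$ with a suitable shift of a truncation of $H^*(\KC^*(\Lk[j]))$. It therefore suffices to produce a chain homotopy equivalence between $\KC^*(\Lk[2])$ and $\h^{n_i}\q^{n_i}\KC^*(\Lk[1])$ on a sufficiently high homological range.

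Next I would slide the wrap along the $n_i$ parallel strands that replaced $\sl_i$ (a planar isotopy in $\Lk[2]$) so that it sits immediately adjacent to the inserted twist $\F_{n_i}^{k_i}$. Applying Theorem \ref{thm:inf twist complex} replaces $\KC^*(\F_{n_i}^{k_i})$ by the simplified complex $\KCsimp(k_i)$, yielding a multicone presentation for $\KC^*(\Lk[2])$ whose summands are indexed by diagrams $\delta$ from the top $2k_i$ homological degrees of $\KCsimp(k_i)$. By Theorem \ref{thm:Fk to Fk+1}\eqref{it:th deg zero}, these $\delta$ all have through-degree zero and contain no disjoint circles, so the local configuration of each summand (parallel strands emerging from $\delta$ with the wrap around them) is exactly the one addressed by Lemma \ref{lem:wrap cone preserved}. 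Applying that lemma termwise, with the sign consistency furnished by Lemma \ref{lem:circle wrap is identity} and the closure $\gamma$ exactly as in the proof of the lemma, gives
\[
\KC^*_{>a'}(\Lk[2]) \simeq \h^{n_i}\q^{n_i}\KC^*_{>a'}(\Lk[1])
\]
for a truncation point $a'$ depending on the chosen homological range.

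The remaining step is grading bookkeeping. For a positive wrap, the $2n_i$ new crossings introduced between the wrapping strand and the $n_i$ parallel strands split as $n_i+\eta_i$ positive and $n_i-\eta_i$ negative, because $(n_i+\eta_i)/2$ of the parallel strands are oriented in agreement with $\sl_i$ while $(n_i-\eta_i)/2$ are oriented oppositely. Translating $\KC^* = \h^{n^-}\q^{2n^- - n^+}KC^*$ from $\Lk[2]$ to $\Lk[1]$ introduces a relative shift of $\h^{n_i-\eta_i}\q^{n_i-3\eta_i}$ on the $\Lk[2]$ side, which combines with the $\h^{n_i}\q^{n_i}$ produced by the wrap removal to give $KC^*(\Lk[2]) \simeq \h^{\eta_i}\q^{3\eta_i}KC^*(\Lk[1])$, proving \eqref{eq:pos wrap shift}. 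The negative wrap case follows by the symmetric argument with the positive and negative crossing counts swapped, yielding \eqref{eq:neg wrap shift}.

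The main obstacle I anticipate is verifying that Lemma \ref{lem:wrap cone preserved} really does apply cleanly inside the larger multicone for $\KC^*(\Lk[2])$: one must check that the sign-consistency argument using the closure $\gamma$ and the very-strong-deformation-retract structure of the cup-sliding Reidemeister 2 simplifications propagates correctly through Corollary \ref{cor:Consistent Cone of Concats} when the outer tangle $\mathcal{X}$ is the entire rest of $\Lk[2]$ (rather than being vertically arranged as in Section \ref{sec:equiv k to k+1}). Once this compatibility is in place, the remaining arithmetic of the grading shifts is routine.
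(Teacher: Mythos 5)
Your proposal is correct and follows the same essential structure as the paper's proof: reduce to truncated finite approximations, apply Lemma \ref{lem:wrap cone preserved} to the local picture where the through-degree-zero, disjoint-circle-free complex is inserted along $\sl_i$, obtain the $\h^{n_i}\q^{n_i}$ shift in the renormalized $\KC^*$ notation, and then convert to $KC^*$ via the crossing-counting arithmetic $n_i - n^- = \eta_i$, $n_i - N = 3\eta_i$. The paper works directly with $\KCsimp_{>-2k_i}(k_i)$ inserted into the diagram (via Proposition \ref{prop:fin approx}) rather than passing through the genuine link $\Lk[2]$ and then simplifying (your route via Corollary \ref{cor:KC(Linf)=KC(Lk)}), but this is an immaterial repackaging.

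One small clarification on the worry you raise at the end: Lemma \ref{lem:wrap cone preserved} is stated purely as a chain homotopy equivalence of complexes in Bar-Natan's planar algebra, so it applies verbatim wherever the local picture occurs inside a larger diagram — there is nothing to re-verify about how it interacts with the outer tangle $\mathcal{X}$. The sign-consistency via the closure $\gamma$ and Lemma \ref{lem:circle wrap is identity} is entirely local to the wrapped strands, and the planar tensor product then propagates the equivalence to the whole complex. Also, your phrase ``multicone presentation for $\KC^*(\Lk[2])$ whose summands are indexed by diagrams from the top $2k_i$ degrees of $\KCsimp(k_i)$'' conflates two steps: replacing $\KC^*(\F_{n_i}^{k_i})$ by $\KCsimp(k_i)$ gives a multicone over \emph{all} diagrams of $\KCsimp(k_i)$, and the restriction to the top $2k_i$ degrees is the subsequent truncation (which is exactly what matters for the finite approximation). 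This does not affect the argument, but the presentation would be cleaner if these steps were kept separate.
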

\begin{proof}
$L_1$ and $L_2$ are the same diagram everywhere except near a specific surgery line $\sl_i$, where we have the following local pictures:
\[L_2=\posdashwrap, \hspace{.5in} L_1 = \posdisjointdash.\]
According to Proposition \ref{prop:fin approx}, it is enough to produce a chain homotopy equivalence between all of the truncated complexes which approximate $KC^*(L_1)$ and $KC^*(L_2)$.  Allowing a slight abuse of notation once again, we must locally show that
\[ KC^* \left( \posdashwrapB{$\KCsimp_{>-2k_i}(k_i)$} \right)
\stackrel{?}{\simeq}
\h^{\eta_i} \q^{3\eta_i} \left( \posdisjointdashB{$\KCsimp_{>-2k_i}(k_i)$} \right)
\]
for any $k_i>0$.  We know from Theorem \ref{thm:inf twist complex} that the complexes $\KCsimp_{>-2k_i}(k_i)$ satisfy the conditions of Lemma \ref{lem:wrap cone preserved}, allowing us to write
\[
\KC^* \left( \posdashwrapB{$\KCsimp_{>-2k_i}(k_i)$} \right) \simeq \h^{n_i} \q^{n_i} \left( \posdisjointdashB{$\KCsimp_{>-2k_i}(k_i)$} \right).
\]
Recalling Definition \ref{def:n- convention}, this is equivalent to writing
\[
KC^* \left( \posdashwrapB{$\KCsimp_{>-2k_i}(k_i)$} \right) \simeq \h^{n_i-n^-} \q^{n_i-N} \left( \posdisjointdashB{$\KCsimp_{>-2k_i}(k_i)$} \right)
\]
where $n^-$ and $N$ count positive and negative crossings in the diagram on the left.  Notice that any blue strand that agrees with the orientation assigned to $\sl_i$ will contribute two positive crossings to this diagram by definition, while the disagreeing strands will contribute two negative crossings.  From here it is a simple exercise to verify that $n_i-n^- = \eta_i$ and $n_i-N=3\eta_i$.  The proof for the negative wrapping move is similar, except the roles of agreement and disagreement are reversed and so we negate $\eta_i$.

Notice that if we had chosen the opposite orientation for $\sl_i$, our positive and negative surgery wrap moves would swap, but we would also negate our $\eta_i$.  Thus our shifts are well-defined.
\end{proof}

\begin{proof}[Proof of Theorem \ref{thm:Kh invariance}]
We need to check invariance with respect to the move types listed in Proposition \ref{prop:isotopic links}, as well as changing the insertion point along any $\sl_i$.  Surgery-wrap moves have already been considered in Theorem \ref{thm:wrapping move shift}, causing chain homotopy equivalences on truncations up to grading shifts depending on the $\eta_i$.  For all of the other moves, we utilize Corollary \ref{cor:KC(Linf)=KC(Lk)} and check for chain homotopy equivalences between complexes of genuine links.  That is to say, if $L_2$ is obtained from $L_1$ by performing either a Reidemeister move, a mirror move, a finger move, or a point-pass move, then we wish to show that
\begin{equation}\label{eq:general invariance check}
\begin{multlined}
\KChq{\sum_i k_i(n_i^- - 2p_i^2)}{\sum_i k_i( N_i - 2p_i(p_i+1) ) } ^* (\Lk[1]) 
\\
\stackrel{?}{\simeq} \KChq{\sum_i k_i(n_i^- - 2p_i^2)}{\sum_i k_i(N_i - 2p_i(p_i+1) ) } ^*(\Lk[2])
\end{multlined}
\end{equation}
for any $\vec{k}$.  Notice that in Equation \ref{eq:general invariance check} we do not truncate the complexes.  Isotopies $\Lk[1]\cong\Lk[2]$ in $S^3$ will allow us to prove Equation \ref{eq:general invariance check} which implies isomorphisms of homology groups even after truncation, despite the fact that the truncated complexes may no longer be chain homotopy equivalent.  However, the shifts $p_i,n_i^-,N_i$ are somewhat deceptive here since each is taken to apply to its respective diagram (ie, in the first line $n_i^-=n_i^-(\Lk[1])$, while in the second line $n_i^-=n_i^-(\Lk[2])$).  The basic idea of the proof is to ensure that the overall shifts remain consistent during any necessary isotopy in $S^3$ of $\Lk[1]$ into $\Lk[2]$.  It is also necessary to ensure that the various $\eta_i$ remain constant as well, since the grading shift from surgery-wrapping uses this.  We will collectively refer to the set $\{p_i,n^-_i,N_i,\eta_i\}$ as the \emph{shifting data} for a given $\sl_i$ in a diagram.

We first note that, since we are now viewing $\Lk[1]$ and $\Lk[2]$ in $S^3$ and are ignoring the spheres $\SSi$, Reidemeister moves involving sphere borders are completely ignorable and correspond to simple planar isotopies.  In the case of Reidemeister moves involving crossings of $L_1$, we immediately get corresponding Reidemeister moves for $\Lk[1]$ in $S^3$ that do not affect any of the shifting data for any of the $\sl_i$.  Similarly, a Reidemeister move involving crossings of $L_1$ with some $\sl_i$ leads to corresponding `parallel' Reidemeister moves for $\Lk[1]$ in $S^3$ which again do not affect any of the shifting data.  Such a Reidemeister 2 move is illustrated below:
\[  \ReidExa \cong \ReidExb  \hspace{.5in} \Longrightarrow \hspace{.5in} \ReidExc \cong \ReidExd .\]
Moving an insertion point $\mathrm{pt}_i$ along $\sl_i$ leads to obvious isotopies in $S^3$ that `slide' the full twisting along the parallel strands, possibly over or under some other strands via many Reidemeister 3 moves:
\[ \InsertionExa \cong \InsertionExb \cong \InsertionExc \hspace{.25in} \Longrightarrow \hspace{.25in} \InsertionExd \cong \InsertionExe \cong \InsertionExf .\]
Again it is clear that the shifting data remain constant between such diagrams.  A point-pass move likewise corresponds to a simple isotopy that maintains all shifting data where we slide a strand over or under the `gluing point' of the black and blue strands according to whether the strand was over or under the sphere.  Note that since we are ignoring the spheres in our diagrams for $\Lk[1]$ and $\Lk[2]$, we no longer draw the strands under the sphere as dashed lines.
\begin{gather*}
\PointPassA \cong \PointPassB \hspRarrow{.2in} \PointPassIsoA \cong \PointPassIsoB\\
\\
\PointPassC \cong \PointPassD \hspRarrow{.2in} \PointPassIsoC \cong \PointPassIsoD
\end{gather*}

Mirror moves utilize the fact that the full twist is a central element of the braid group.  Thus any braid $\beta$ inserted adjacent to a surgery sphere can be `slid' upwards along the surgery line $\sl_i$, above or below any other crossings with $L$, passing through the full twists $\F_{n_i}^{k_i}$, until it meets its inverse braid $\beta^{-1}$ that was also inserted adjacent to the other sphere.  Once again, the shifting data clearly remains constant.  We illustrate the case below with one strand of $L$ crossing over the given $\sl_i$.
\[ \MirrorMoveIsoX \cong \MirrorMoveIsoY \hspRarrow{.2in}
\MirrorMoveIsoA \cong \MirrorMoveIsoB{\F_{n_i}^{k_i}}{\beta} \cong \MirrorMoveIsoB{\beta}{\F_{n_i}^{k_i}} \cong \MirrorMoveIsoC \]

The remaining move, which is slightly more interesting than the previous moves, is the finger move where some of the shifting data can change.  We present the invariance under this move in slightly more detail than the others.  First, it is clear from Figure \ref{fig:finger move} that, although the number of strands $n_i$ changes by two during a finger move, the two new strands must be oppositely oriented so that $\eta_i$ remains unchanged.  Next, we demonstrate the $S^3$ isotopy that performs the finger move on $\Lk$.  Let $\Lk[1]$ and $\Lk[2]$ be as indicated by the local pictures below, corresponding to the left and right diagrams from Figure \ref{fig:finger move} after inserting our full twists into a `topmost' point on the surgery line $\sl_i$:
\[ \Lk[1] := \fingermoveLka \hspace{.5in} , \hspace{.5in} \Lk[2] := \fingermoveLkb
\]
where we have fixed $k:=k_i$ and $n:=n_i(\Lk[1])$, so that $n_i(\Lk[2]) = n+2$.  The symbols $\stackrel{n}{\mathrm{...}}$ are to indicate that there can be any (even) number of strands $n$ to start with.  The dotted lines in place of the blue strands are there to indicate that the link may have strands crossing over or under the parallel blue strands in that area.

There is a clear isotopy in $S^3$ from $\Lk[2]$ to $\Lk[1]$ by first pulling the matching through the full twist as in Figure \ref{fig:pull thru ft}, and then `sliding' the matching further down along the path of the surgery line (perhaps passing over or under other stands of $L$, similarly to the braid $\beta$ in a mirror move) until we arrive at $\Lk[1]$.  To find relationships in the shifting data before and after the isotopy, we count crossings in the same way as we did during the proof of Lemma \ref{lem:ft Reid shifts}.  Letting $n^-_{i,j}:=n^-_i(\Lk[j])$, and similarly for the other shifting data, we find the following conversions (recall from Definition \ref{def:notations for surgery strands} that $n^-_{i,j}$ denotes the number of negative crossings that occur in a \emph{single} copy of $\F_{n_i}$ in $\Lk[j]$, and similarly for $N_{i,j}$):
\begin{gather*}
p_{i,2} = p_{i,1}+1\\
n_{i,2}^- = n_{i,1}^- + 2n + 2\\
N_{i,2} = N_{i,1} + 2n + 4\\
\eta_{i,2} = \eta_{i,1}.
\end{gather*}
Compare this to the shifts of $2m(n-m)$ and $2m(n-m+1)$ of Lemma \ref{lem:ft Reid shifts} in the case of $m=1$ matching, but with $n+1$ used in place of $n$.  The reader can check that these shifts result in \emph{no} overall shift in either grading when checking Equation \ref{eq:general invariance check}, ie
\begin{gather*}
n_{i,2}^- - 2p_{i,2}^2 = n_{i,1}^- - 2p_{i,1}^2\\
N_{i,2} - 2p_{i,2}(p_{i,2}+1) = N_{i,1} - 2p_{i,1}(p_{i,1}+1)
\end{gather*}
and so the isotopy described above gives the required chain homotopy equivalence between complexes.

This finishes all of the moves required by Proposition \ref{prop:isotopic links}, and so we are done.
\end{proof}

\begin{proof}[Proof of Theorem \ref{thm:Main Thm}]
For a link $\L\subset M^r$ with diagram $L$ in $P'$, we have Khovanov homology groups $Kh(L)$ (Definition \ref{def:KC(Linf)}) which are well-defined up to grading shifts by Theorem \ref{thm:Kh invariance} allowing us to use the notation $Kh(\L)$.  In the case when $r=0$ and there are no attaching spheres or surgery lines to replace with infinite twists, our definition is clearly equivalent to that of the usual Khovanov homology of $\L\subset S^3$.
\end{proof}

\begin{remark}\label{rmk:fixed projection}
Our constructions and proofs have used a fixed projection $M^r\rightarrow P'$, with isotopies that fix the ambient $M^r$ while isotoping the link.  This is enough to construct a well-defined $Kh(\L)$ for links in a fixed copy of $M^r$, proving Theorem \ref{thm:Main Thm}.  It is also possible to show that the construction is invariant under moves that alter the projection, corresponding to moving the attaching spheres and surgery lines in the planar diagram $P'$.  The proofs are similar in spirit to the proof of Theorem \ref{thm:Kh invariance}.  We will investigate this further in Section \ref{sec:knotification} where knotifications for $r$-component links in $S^3$ naturally lead to a different planar projection of $M^r$.
\end{remark}

\section{Examples in $M^1=S^2\times S^1$ and $M^2$}
\label{sec:Examples}
In this section we present some computations of $Kh^*(L)$ over $\Z$ for some simple diagrams $L$ for links in $M^r$.  Throughout this section we will make use of the well-known complex for the infinite full twist on two strands:
\begin{equation}\label{eq:two strand cx}
C^*(\F_2^\infty) \simeq
\left(\cdots\rightarrow \q^{-5} \hres \xrightarrow{\hresdotT + \hresdotB} \q^{-3} \hres \xrightarrow{\hresdotT - \hresdotB} \underline{\q^{-1} \hres}
\right)
\end{equation}
where the pair of maps $(\hresdotT+\hresdotB)$ and $(\hresdotT-\hresdotB)$ repeat ad infinitum to the left and we have underlined the term in homological degree zero.  (Compare to the 2-strand projector in \cite{CK}, which has a fixed identity term on the far left; here, this term is pushed out to $-\infty$ since we are fixing the terms in maximal homological degree instead.)

\subsection{Two unlinked longitudes in $S^2\times S^1$}
\label{sec:two unlinked longitudes}
\begin{figure}
\[
L_1:=\LtwoLongitudes
\qquad \qquad \qquad
\begin{tikzpicture}[x=.8cm,y=-.4cm, baseline={([yshift=-.7ex]current bounding box.center)}]
\foreach \i/\j/\g in {0/0/\Z, 0/-2/\Z, -1/-2/\Z, -1/-4/{\Z_2}, -2/-6/\Z, -3/-6/\Z, -3/-8/{\Z_2}, -4/-10/\ddots, -4/2/\cdots, 1/-10/\vdots}
{
\node at (\i,\j){$\g$};
}

\foreach \i in {0,..., -3}
{
\draw (\i-.5,-10-.5) -- (\i-.5,3);
\node at (\i,2) {$\i$};
}
\foreach \j in {0,-2,-4,-6,-8}
{
\draw (-4.25,\j-1) -- (1.5,\j-1);
\node at (1,\j) {$\j$};
}
\draw[thick]
(.5,3) -- (.5,-10.5)
(-4.25,1) -- (1.5,1);

\draw[thick] (.5,1) -- (1.5,3);
\node[scale=.9] at (1-.2,2+.4) {$\h$};
\node[scale=.9] at (1+.2,2-.4) {$\q$};

\end{tikzpicture}
\]
\caption{The diagram $L_1$ along with the Khovanov homology groups $Kh(L_1)$ presented in a table with horizontal axis denoting homological grading and vertical axis denoting $q$-grading.  Because we are forming complexes that are `infinite on the left', the axes are drawn `backwards'.  The pattern of groups is 2-periodic for $\h$-grading below $0$ and for $\q$-grading `below' $0$.}
\label{fig:LtwoLongitudes}
\end{figure}
Let $L_1$ denote the diagram of Figure \ref{fig:LtwoLongitudes} for the link in $M^1=S^2\times S^1$ formed by taking two longitudes in $D^2\times S^1$.  Inserting the complex $\eqref{eq:two strand cx}$ in place of the surgery line results in a simple complex of the form
\[ 
\begin{tikzcd}
\cdots \arrow[r] & \q^{-9}\TLcircle \arrow[r,"{ 2\circledot }"] & \q^{-7}\TLcircle \arrow[r,"0"] & \q^{-5}\TLcircle \arrow[r,"{ 2\circledot }"] & \q^{-3}\TLcircle \arrow[r,"0"] & \underline{\q^{-1}\TLcircle}.
\end{tikzcd}
\]
Bar-Natan's delooping isomorphism converts this to
\[
\begin{tikzcd}
 & \q^{-10}\Z & \q^{-8}\Z & \q^{-6}\Z & \q^{-4}\Z & \underline{\q^{-2}\Z}\\
\cdots & \oplus & \oplus & \oplus & \oplus & \oplus\\
 & \q^{-8}\Z \arrow[ruu,"2"] & \q^{-6}\Z & \q^{-4}\Z \arrow[ruu,"2"] & \q^{-2}\Z & \underline{\q^{0}\Z}
\end{tikzcd}
\]
from which we can quickly compute the homology to be:

\[Kh^{i,j}(L_1)\cong \begin{cases}
\Z & \text{if $(i,j)=(0,0)$}\\
\Z & \text{if $(i,j)=(-2k+\epsilon,-2-4k)$ for $k\geq 0$, $\epsilon\in\{0,1\}$}\\
\Z_2 & \text{if $(i,j)=(-1-2k,-4-4k)$ for $k\geq 0$}
\end{cases}.
\]
The table is presented in Figure \ref{fig:LtwoLongitudes}.  Note that this is precisely the stable homology of the torus links $T(2,k)$ as $k\rightarrow\infty$.



\subsection{The Whitehead knot in $S^2\times S^1$}
\label{sec:Whitehead}
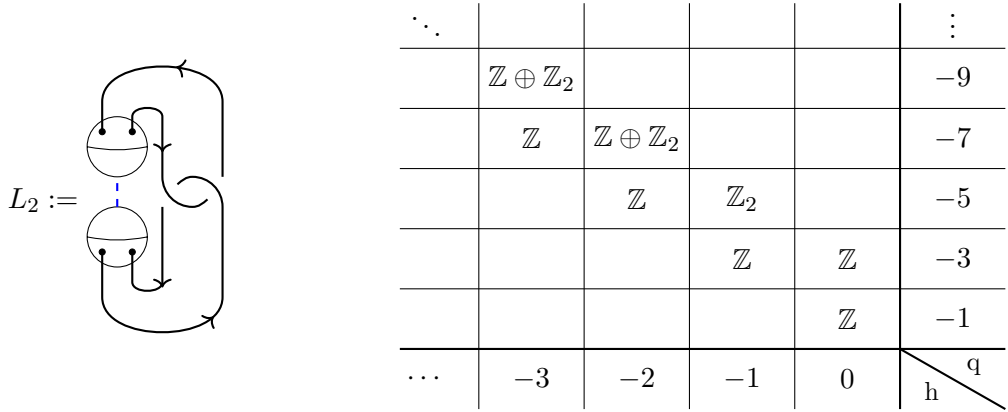
\begin{figure}
\[L_2:= \LWhitehead
\qquad \qquad \qquad
\begin{tikzpicture}[x=1.4cm,y=-.4cm, baseline={([yshift=-.7ex]current bounding box.center)}]
\foreach \i/\j/\g in {-3/-9/{\Z\oplus\Z_2}, -3/-7/\Z, -2/-7/{\Z\oplus\Z_2}, -2/-5/\Z, -1/-5/{\Z_2}, -1/-3/\Z, 0/-3/\Z, 0/-1/\Z, -4/-11/\ddots, 1/-11/\vdots, -4/1/\cdots}
{
\node at (\i,\j){$\g$};
}

\foreach \i in {0,..., -3}
{
\draw (\i-.5,-11-.5) -- (\i-.5,2);
\node at (\i,1) {$\i$};
}
\foreach \j in {-1,-3,-5,-7,-9}
{
\draw (-4.25,\j-1) -- (1.5,\j-1);
\node at (1,\j) {$\j$};
}
\draw[thick]
(.5,2) -- (.5,-11.5)
(-4.25,0) -- (1.5,0);

\draw[thick] (.5,0) -- (1.5,2);
\node[scale=.9] at (1-.2,1+.4) {$\h$};
\node[scale=.9] at (1+.2,1-.4) {$\q$};

\end{tikzpicture}
\]
\caption{The diagram $L_2$ along with the Khovanov homology groups $Kh(L_2)$ presented in a table with horizontal axis denoting homological grading and vertical axis denoting $q$-grading.  The pattern of groups is 1-periodic for $\h$-grading below $-1$ and for $\q$-grading `below' $-5$.}
\label{fig:LWhitehead}
\end{figure}

Let $L_2$ denote the diagram of Figure \ref{fig:LWhitehead} for the knot in $M^1=S^2\times S^1$ resulting from performing 0-surgery on one component of the Whitehead link in $S^3$ ($L_2$ can also be viewed as the knotification of the properly oriented Hopf link; see Section \ref{sec:knotification}).  We tensor the complex \eqref{eq:two strand cx} along the surgery line together with the complex
\[\h^{-2}\q^{-4}\left(\underline{\hres} \xrightarrow{\quad s \quad} \q\vres \xrightarrow{\vresdotL-\vresdotR} \q^3\vres \right)\]
(here $s$ denotes the usual saddle cobordism) coming from the two crossings already present in the diagram to arrive at the following complex
\[
\begin{tikzcd}
\cdots \arrow[r] & \q^{-13} \TLtwocircle \arrow[r,"w^+"] \arrow[d,"s"] & \q^{-11} \TLtwocircle \arrow[d,"s"] \arrow[r,"w^-"] & \q^{-9} \TLtwocircle \arrow[r,"w^+"] \arrow[d,"s"] & \q^{-7} \TLtwocircle \arrow[d,"s"] \arrow[r,"w^-"] & \q^{-5} \TLtwocircle \arrow[d,"s"]
\\
\cdots \arrow[r] & \q^{-12} \TLcircle \arrow[d,"0"] \arrow[r,"-2\circledot"] & \q^{-10}\TLcircle \arrow[d,"0"] \arrow[r,"0"] & \q^{-8}\TLcircle \arrow[d,"0"] \arrow[r,"-2\circledot"] & \q^{-6}\TLcircle \arrow[d,"0"] \arrow[r,"0"] & \q^{-4}\TLcircle \arrow[d,"0"]
\\
\cdots \arrow[r] & \q^{-10}\TLcircle \arrow[r,"2\circledot"] & \q^{-8}\TLcircle \arrow[r,"0"] & \q^{-6}\TLcircle \arrow[r,"2\circledot"] & \q^{-4}\TLcircle \arrow[r,"0"] & \underline{\q^{-2}\TLcircle}
\end{tikzcd}
\]
where $w^{\pm}$ denotes the map
\[w^\pm:= \twocircledotT \pm \twocircledotB.\]

If we use Bar-Natan's delooping isomorphism and split this complex in terms of $\q$-degree, we see the following collection of complexes:
\begin{itemize}
\item In $q$-degree $-1$ there is a single-term complex $\Z$ in homological grading zero whose homology is the same.
\item In $q$-degree $-3$ there is a complex of the form (still arranged as a tensor product of two complexes, with homological gradings constant along diagonals in the usual way)
\[
\left(
\begin{tikzcd}
 & \Z \arrow[d,"1"]\\
 & \Z\\
\Z & \underline{\Z}
\end{tikzcd}
\right).
\]
The homology is then easily computed to be
\[Kh^{i,-3}(L_2) \cong \begin{cases}
\Z & \text{if $i=-1,0$}\\
0 & \text{else}
\end{cases}.
\]
\item In $q$-degree $-5$ we have a complex of the form
\[
\left(
\begin{tikzcd}
 & \Z \arrow[d,"1"] \arrow[r,"\begin{pmatrix} 1 \\ -1 \end{pmatrix}"] & \Z\oplus\Z \arrow[d,"\big( 1 \quad 1 \big)"]
\\
 & \Z & \Z
\\
\Z \arrow[r,"2"] & \Z & 
\end{tikzcd}
\right)
\]
where the lower right (empty) corner has homological degree zero.  The homology can be computed after a Gaussian elimination of the vertical identity map to be
\[Kh^{i,-5}(L_2) \cong \begin{cases}
\Z & \text{if $i=-2$}\\
\Z_2 & \text{if $i=-1$}\\
0 & \text{else}
\end{cases}.
\]

\item In $q$-degrees $-4k-3$ for $k\geq 1$, we have complexes of the form
\[
\left(
\begin{tikzcd}[column sep=large]
 & \Z \arrow[d,"1"] \arrow[r,"\begin{pmatrix} 1 \\ 1 \end{pmatrix}"] & \Z\oplus\Z \arrow[d,"\big( 1 \quad 1 \big)"] \arrow[r,"\big( -1 \quad 1 \big)"] & \Z
\\
 & \Z \arrow[r,"-2"] & \Z &
\\
\Z & \Z & &
\end{tikzcd}
\right)
\]
where the lower right (empty) corner has homological degree $-2(k-1)$.  The homology can be computed after Gaussian elimination of the vertical identity and one of the right-most horizontal identities to be
\[Kh^{i,-4k-3}(L_2) \cong \begin{cases}
\Z \oplus \Z_2 & \text{if $i=-2k$}\\
\Z & \text{if $i=-2k-1$}\\
0 & \text{else}
\end{cases}.
\]

\item In $q$-degrees $-4k-1$ for $k\geq 2$, we have complexes of the form
\[
\left(
\begin{tikzcd}[column sep=large]
 & \Z \arrow[d,"1"] \arrow[r,"\begin{pmatrix} 1 \\ -1 \end{pmatrix}"] & \Z\oplus\Z \arrow[d,"\big( 1 \quad 1 \big)"] \arrow[r,"\big( 1 \quad 1 \big)"] & \Z
\\
 & \Z & \Z &
\\
\Z \arrow[r,"2"] & \Z & &
\end{tikzcd}
\right)
\]
where the lower right (empty) corner has homological degree $-2k+3$.  The homology can be computed after similar Gaussian eliminations to the previous case, giving the same homology
\[
Kh^{i,-4k-1}(L_2) \cong \begin{cases}
\Z \oplus \Z_2 & \text{if $i=-2k+1$}\\
\Z & \text{if $i=-2k$}\\
0 & \text{else}
\end{cases}.
\]
\end{itemize}
All of these homology groups are summarized neatly in the table of Figure \ref{fig:LWhitehead}.

\subsection{A crossingless knot in $M^2$}
\label{sec:crossingless}
\begin{figure}
\[L_3:= \Lcrossingless
\qquad \qquad
\begin{tikzpicture}[x=1.4cm,y=-.4cm, baseline={([yshift=-.7ex]current bounding box.center)}]
\foreach \i/\j/\g in {0/-1/\Z, 0/-3/\Z, -1/-3/\Z^2, -1/-5/\Z_2^2, -2/-5/\Z, -2/-7/\Z^2\oplus\Z_2, -3/-7/\Z^3, -3/-9/\Z\oplus\Z_2^3, -4/-9/\Z^2, -4/-11/\Z^3\oplus\Z_2^2, -5/-11/\Z^4, -5/-13/\ddots, -6/1/\cdots, 1/-13/\vdots}
{
\node at (\i,\j){$\g$};
}

\foreach \i in {0,..., -5}
{
\draw (\i-.5,-13.5) -- (\i-.5,2);
\node at (\i,1) {$\i$};
}
\foreach \j in {-1,-3,-5,-7,-9,-11}
{
\draw (-6.25,\j-1) -- (1.5,\j-1);
\node at (1,\j) {$\j$};
}
\draw[thick]
(.5,2) -- (.5,-13.5)
(-6.25,0) -- (1.5,0);

\draw[thick] (.5,0) -- (1.5,2);
\node[scale=.9] at (1-.2,1+.4) {$\h$};
\node[scale=.9] at (1+.2,1-.4) {$\q$};

\end{tikzpicture}
\]
\\
\[Kh^{i,j}(L_3) \cong \begin{cases}
\Z & \text{if $(i,j)=(0,-1)$}\\
\Z^{k+2} & \text{if $(i,j)=(-1-2k,-3-4k)$ for $k\geq 0$}\\
\Z^k & \text{if $(i,j)=(-2k,-1-4k)$ for $k\geq 1$}\\
\Z^k\oplus\Z_2^{k+2} & \text{if $(i,j)=(-1-2k,-5-4k)$ for $k\geq 0$}\\
\Z^{k+1}\oplus\Z_2^k & \text{if $(i,j)=(-2k,-3-4k)$ for $k\geq 0$}\\
0 & \text{else}
\end{cases}
\]

\caption{The diagram $L_3$ along with the Khovanov homology groups $Kh(L_3)$ presented in a table with horizontal axis denoting homological grading and vertical axis denoting $q$-grading.  The pattern is harder to see in this table, so the concrete formulas have been provided.  The second and third cases describe the diagonal $2i-j=1$, while the third and fourth cases describe the diagonal $2i-j=3$.}
\label{fig:Lcrossingless}
\end{figure}

Let $L_3$ denote the diagram of Figure \ref{fig:Lcrossingless} for the corresponding knot in $M^2$ We tensor together two copies of the complex \eqref{eq:two strand cx} along the two surgery lines together, noting that all resolutions available create a single circle, to build an infinite complex of the form
\[
\begin{tikzcd}
 & & & & & \vdots \arrow[d]\\
 & & & \ddots & & \q^{-10}\TLcircle \arrow[d,"2\circledot"]\\
 & & \ddots & & \q^{-10}\TLcircle \arrow[d,"0"] \arrow[r,"0"] & \q^{-8}\TLcircle \arrow[d,"0"]\\
 & \ddots & & \q^{-10}\TLcircle \arrow[d,"2\circledot"] \arrow[r,"2\circledot"] & \q^{-8}\TLcircle \arrow[d,"2\circledot"] \arrow[r,"0"] & \q^{-6}\TLcircle \arrow[d,"2\circledot"]\\
 & & \q^{-10}\TLcircle \arrow[d,"0"] \arrow[r,"0"] & \q^{-8}\TLcircle \arrow[d,"0"] \arrow[r,"-2\circledot"] & \q^{-6}\TLcircle \arrow[d,"0"] \arrow[r,"0"] & \q^{-4} \TLcircle \arrow[d,"0"]\\
\cdots \arrow[r] & \q^{-10}\TLcircle \arrow[r,"2\circledot"] & \q^{-8}\TLcircle \arrow[r,"0"] & \q^{-6}\TLcircle \arrow[r,"2\circledot"] & \q^{-4} \TLcircle \arrow[r,"0"] & \underline{\q^{-2} \TLcircle}
\end{tikzcd}.
\]
The reader may verify that, after delooping, there are no non-zero compositions of maps, so that the signs are irrelevant.  If we split the complex according to $\q$-degree as in the example with $L_2$, we find that the complex in any given $\q$-degree (other than the easy case of $\q$-degree $-1$) is supported in precisely two homological degrees, with a simple format that depends on the congruence class of the $\q$-degree modulo 4:
\begin{itemize}
\item For $\q$-degree $-1-4k$ with $k\geq 1$, $KC^*(L_3)$ is equivalent to a complex $\Z^{2k+1}\xrightarrow{d_{-1}}\Z^{2k}$ where $d_{-1}$ is a matrix of the form
\[d_{-1}=\begin{pmatrix}
2 & 0 & 0 & 0 & 0 & \cdots & 0\\
0 & \vdots & 2 & \vdots & 0 & & \vdots\\
\vdots & & 2 & & 0 & & \\
 & & 0 & & 2 & & \\
 & & \vdots & & 2 & & \\
 & & & & 0 & & \vdots \\
\vdots & & & & \vdots & \ddots & 0 \\
0 & \cdots & & & & & 2
\end{pmatrix}
\]
where each odd column (other than the first and last) has a $2$ on and directly above the `diagonal' as shown, and the rest of the entries are $0$.

\item For $\q$-degree $-3-4k$ with $k\geq 0$, $KC^*(L_3)$ is equivalent to a complex $\Z^{2(k+1)}\xrightarrow{d_{-3}}\Z^{2k+1}$ where $d_{-3}$ is a matrix of the form
\[
d_{-3} = \begin{pmatrix}
0 & \cdots & & & & & \cdots & 0\\
0 & 2 & 2 & 0 & \cdots & & & \vdots\\
0 & \cdots & & & & & & \\
0 & 0 & 0 & 2 & 2 & 0 & \cdots & \\
\vdots & & & & & & \ddots & \\
0 & \cdots & & & & & \cdots & 0
\end{pmatrix}
\]
where each even row has a $2$ on and directly to the right of the `diagonal' as shown, and the rest of the entries are $0$.

\end{itemize}
From here, it is a simple computation to show that the homology of such complexes fits into the formula and table of Figure \ref{fig:Lcrossingless}.

\section{Decategorification and odd intersection numbers with belt spheres} \label{sec:decat}
\subsection{The Kauffman bracket skein module}
Given a 3-manifold $M$, Hoste and Przytycki (see the survey \cite{HP} on this topic) study the skein module $\sk(M)$ generated over $R[q,q^{-1}]$ by isotopy classes of framed links in $M$, subject to local relations based on the Kauffman bracket $\langle\cdot\rangle$ (up to a renormalization depending on the orientation of the link):
\begin{equation}\label{eq:Kauffman relations}
\langle \crossing \rangle = \langle \vres \rangle - q \langle \hres \rangle,\qquad \langle L\sqcup U \rangle = (q+q^{-1})\langle L \rangle.
\end{equation}
In the case of $M=M^r$, our definition for Khovanov homology requires the use of semi-infinite complexes.  As such, in order to relate this to the skein module $\sk(M^r)$, we must allow for power series in our ground ring.  We let $\sk_\ell(M^r)$ denote the skein module taken over the ring $R[q,q^{-1}]]$ of formal Laurent series in $q$, with corresponding Kauffman bracket $\langle \cdot \rangle_\ell$.

In the following theorem, we call an oriented link $\L\subset M^r$ \emph{null-homologous} if $[\L]=0$ in $H_1(M^r)$.  Note that this is equivalent to all of the algebraic intersection numbers $\eta_i$ being zero, so our Khovanov homology groups are well-defined with no shifts incurred for performing surgery-wrap moves.

\begin{theorem}\label{thm:general skein module cat}
Given a framed, null-homologous link $\L\subset M^r$, with Khovanov homology groups $Kh^*(\L)$ as defined in Section \ref{sec:Defining general Kh}, the graded Euler characteristic $\chi_q( Kh^*(\L) )$ recovers the Kauffman bracket skein module element $\langle\L\rangle_\ell \in \sk_\ell(M^r)$ over the formal Laurent series ring $R[q,q^{-1}]]$ up to a renormalization depending on the orientation of $\L$.
\end{theorem}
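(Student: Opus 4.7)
The plan is to show that $\chi_q(Kh^*(\L))$ and $\langle \L \rangle_\ell$ are the same formal $R[q,q^{-1}]]$-linear combination of crossingless framed links in $M^r$. By Theorem \ref{thm:inf twist complex}, every diagram $\delta$ contributing to $KC^*(L)$ has through-degree zero along each surgery line, so after matching its endpoints through the handles it represents an honest framed crossingless link in $M^r$. Applying Khovanov's functor then identifies
\[
\chi_q\bigl(Kh^*(\L)\bigr) \;=\; \sum_{\delta} (-1)^{\h(\delta)}\, q^{\q(\delta)}\, (q+q^{-1})^{|\pi_0(\delta)|} \;\in\; \sk_\ell(M^r).
\]
Convergence in each fixed $q$-degree follows from Proposition \ref{prop:fin approx}, and the null-homologous hypothesis $\eta_i \equiv 0$ together with Theorem \ref{thm:Kh invariance} ensures that this element is well-defined up to the stated orientation renormalization.

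Next, I would exploit Bar-Natan's canopoly structure to factor $\chi_q$ over the planar tensor product defining $KC^*(L)$. Each crossing of $\Lzero$ contributes the Kauffman mapping cone $\langle\vres\rangle - q\langle\hres\rangle$ (the decategorification of Equation \ref{eqn:KC main def}), while each surgery line contributes the graded Euler characteristic of the inserted infinite-twist complex. Combining these pieces yields
\[
\chi_q(Kh^*(\L)) \;=\; \bigl\langle L \text{ with } \chi_q(C^*(\F_{n_i}^\infty)) \text{ inserted along each } \sl_i \bigr\rangle_\ell.
\]
By Remark \ref{rmk:cat JW proj}, $\chi_q(C^*(\F_n^\infty)) = P_{n,0}$, the zero-weight Jones--Wenzl projector on $n$ strands; this identity can also be verified directly by decategorifying Theorems \ref{thm:single ft cx} and \ref{thm:Fk to Fk+1}.

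Finally, I would identify the Kauffman bracket of $L$ with $P_{n_i,0}$ insertions with the skein element $\langle \L \rangle_\ell$. This is a skein-theoretic statement about $\sk(M^r)$: the Hoste--Przytycki handle-decomposition description \cite{HP} implements passage from $S^3$ to $M^r$ by projecting the $n_i$ strands at the $i$-th handle onto the zero-weight summand of the corresponding Temperley--Lieb algebra, which is exactly the operation of inserting $P_{n_i,0}$ along each surgery line. The orientation renormalization accounts for the shift between the unoriented Kauffman convention and Bar-Natan's $\h^{n^-}\q^N$ normalization from Definition \ref{def:n- convention}.

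The main obstacle is this last step: justifying in full detail that insertion of $P_{n,0}$ along each surgery line produces precisely the Hoste--Przytycki skein element for $\L \subset M^r$. I expect this can be handled either by a direct appeal to their generating-set description of $\sk(M^r)$, or by verifying the full list of isotopy moves from Proposition \ref{prop:isotopic links} directly on both sides (with $\eta_i = 0$ eliminating the surgery-wrap shifts, so that the invariance matches on the nose).
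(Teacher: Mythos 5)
Your proposal takes a genuinely different route from the paper's. The paper's proof is short and direct: it observes that the exact triangle of Equation \ref{eqn:KC cone def} and Bar-Natan's delooping isomorphism categorify precisely the two Kauffman bracket relations of Equation \ref{eq:Kauffman relations}, so $\chi_q(KC^*(L))$ is computed by the same local rules that define $\sk_\ell(M^r)$ and must therefore agree with $\langle L\rangle_\ell$ (after adding kinks to match the framing). No Jones--Wenzl projector appears; the paper defers the identification $\chi_q(C^*(\F_n^\infty)) = P_{n,0}$ to the separate WRT discussion in Section \ref{sec:WRT}.

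Your steps 1 and 2 are sound: the through-degree-zero property of $C^*(\F_n^\infty)$ and the canopoly factorization both hold, and the equality $\chi_q(C^*(\F_n^\infty)) = P_{n,0}$ (as a formal Laurent series) is the computation the paper carries out at the end of Section \ref{sec:WRT}. But the final step you flag as the main obstacle is indeed a genuine gap, and your attempted justification contains a factual inaccuracy: Hoste and Przytycki do not describe $\sk(M^r)$ by projecting onto the zero-weight summand of $TL_{n_i}$ at each handle. Their computation of $\sk(S^2\times S^1)$ in \cite{S1S2skein} yields a free summand on the empty link together with infinitely many torsion summands (generated by certain cabled curves $J_n$ with annihilators of the form $1-q^{2a}$), which is incompatible with a clean $P_{n,0}$-insertion description over $R[q,q^{-1}]$. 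The fact that the torsion disappears over the formal series ring $R[q,q^{-1}]]$ is precisely what the paper derives as a \emph{consequence} (Corollary \ref{cor:polys in M1}), not a prerequisite, so invoking it at this stage would be circular. The paper's categorification-of-relations argument sidesteps the need for any explicit presentation of the skein module, which is exactly what makes it shorter than, and preferable to, the $P_{n,0}$ route unless you can supply an independent proof of the skein-theoretic identification you are asking for.
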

\begin{proof}
Let $L$ be a diagram for the link $\L$.  Our definition for the Khovanov complex of a link in $M^r$ is based on the usual Khovanov complex away from the inserted full twists, while allowing for complexes that are infinite `to the left' whose graded Euler characteristic can give power series in $q^{-1}$.  The Kauffman bracket relations of Equation \ref{eq:Kauffman relations} are precisely the relations categorified by the usual Khovanov complex via the exact triangle implied by Equation \ref{eqn:KC cone def} and Bar-Natan's delooping isomorphism \cite{BN}.  It follows that, after adding kinks to $L$ as necessary to equate the given framing for $\L$ with the blackboard framing, $\chi_q(KC^*(L))$ satisfies all of the exact same relations as the Kauffman bracket and thus must match $\langle L \rangle_\ell$.
\end{proof}

Since our categorified construction passes through complexes of diagrams in $S^3$, which may be delooped into diagrams between empty links, we may rephrase Theorem \ref{thm:general skein module cat} as follows.
\begin{corollary}
The skein module $\sk_\ell^0(M^r)$ of isotopy classes of null-homologous framed links in $M^r$ (over $R[q,q^{-1}]]$) is isomorphic to a single free summand $R[q,q^{-1}]]$ generated by the empty link.
\end{corollary}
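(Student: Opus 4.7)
The plan is to deduce this corollary directly from Theorem \ref{thm:general skein module cat} by examining the structural form of $KC^*(L)$ from Section \ref{sec:def KC(L)}. The key observation is that each surgery line is replaced by the complex $C^*(\F_{n_i}^\infty)$, whose diagrams all have through-degree zero by Theorem \ref{thm:inf twist complex}. After these complexes are inserted into $\Lzero$ and tensored via the planar algebra structure, every term of $KC^*(L)$ is a closed diagram in $S^3 \subset M^r$ consisting only of disjoint simple closed curves, each removable by Bar-Natan's delooping relation. Consequently each such term evaluates in the skein module to $q^a (q+q^{-1})^b \cdot \langle \emptyset \rangle_\ell$ for integers $a, b$.

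The generation step is then immediate: for any null-homologous framed link $\L \subset M^r$ with diagram $L$, Theorem \ref{thm:general skein module cat} gives $\langle \L \rangle_\ell = \chi_q(Kh^*(L))$ in $\sk_\ell(M^r)$, and the observation above expresses this as $p_\L(q) \cdot \langle \emptyset \rangle_\ell$ for some $p_\L(q) \in R[q,q^{-1}]]$. Convergence of $p_\L(q)$ as a formal Laurent series (rather than as an element of some larger completion) is guaranteed by the bound $\h(\delta) \leq 0$ from Theorem \ref{thm:inf twist complex} combined with the finite approximation of Proposition \ref{prop:fin approx}, which together force only finitely many $q$-degrees to appear at each fixed homological level. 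This shows every class in $\sk_\ell^0(M^r)$ lies in $R[q,q^{-1}]] \cdot \langle \emptyset \rangle_\ell$.

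For freeness, I will define an $R[q,q^{-1}]]$-linear evaluation map $\sk_\ell^0(M^r) \to R[q,q^{-1}]]$ by $\L \mapsto \chi_q(Kh^*(\L))$. Well-definedness follows from isotopy invariance of $Kh^*(\L)$ (Theorem \ref{thm:Main Thm}, with no grading shifts since $\L$ is null-homologous) together with the categorification of the Kauffman relations used in the proof of Theorem \ref{thm:general skein module cat}. Since $\chi_q(Kh^*(\emptyset)) = 1$, this map carries $\langle \emptyset \rangle_\ell$ to $1$, providing a splitting and proving the empty link is not a torsion element. The main obstacle I expect is bookkeeping — namely, carefully justifying the convergence assertions and confirming that the evaluation map genuinely lands in $R[q,q^{-1}]]$ rather than in a strictly larger formal completion, especially given the semi-infinite nature of the complexes involved.
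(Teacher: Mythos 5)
Your proof is correct and takes essentially the same approach as the paper, whose entire justification for this corollary is the sentence preceding it: the categorified construction passes through closed $S^3$ diagrams, all of which deloop to the empty diagram. Your elaboration of the freeness claim via the evaluation map $\L \mapsto \chi_q(Kh^*(\L))$ is precisely what that remark implicitly invokes through Theorem \ref{thm:general skein module cat}, and the $q$-degree boundedness you flag as a worry is already asserted in that theorem's proof, where the Euler characteristic is noted to be a power series in $q^{-1}$.
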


In the case of $M^1=S^2\times S^1$, Hoste and Przytycki show \cite{S1S2skein} that the $\sk(M^1)$ over $R[q,q^{-1}]$ is also isomorphic to a single free summand $R[q,q^{-1}]$ generated by the empty link, together with infinitely many torsion summands.  For a link $\L$, let $\langle \L \rangle_{free}\in R[q,q^{-1}]$ denote the free part of $\langle \L \rangle \in \sk(M^1)$.

\begin{corollary}\label{cor:polys in M1}
For a null-homologous $\L\subset M^1$, the graded Euler characteristic $\chi_q( Kh^*(\L) )$ recovers the polynomial $\langle \L \rangle_{free} \in R[q,q^{-1}]$.
\end{corollary}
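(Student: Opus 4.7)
The plan is to combine Theorem \ref{thm:general skein module cat} with the structure of $\sk(M^1)$ determined by Hoste and Przytycki in \cite{S1S2skein}. Their classification presents $\sk(M^1)$ as the direct sum of a free summand $R[q,q^{-1}]\cdot\{\emptyset\}$ and a torsion submodule $T$, in which every torsion generator is annihilated by an element of $R[q,q^{-1}]$ of the form $1-q^{j}$ with $j\neq 0$. The starting observation is that any such element is invertible in the larger ring $R[q,q^{-1}]]$ of formal Laurent series (i.e.\ polynomials in $q$ together with power series in $q^{-1}$): for $j>0$ one has the expansion $(1-q^{j})^{-1}=-\sum_{k\geq 1}q^{-kj}$, and the analogous expansion works for $j<0$.

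With this in hand, the first step is to use the invertibility to show that base change annihilates the torsion. Since $\sk_\ell(M^1)$ is defined by the same generators and Kauffman bracket relations as $\sk(M^1)$ but now over $R[q,q^{-1}]]$, it agrees with the base change $R[q,q^{-1}]]\otimes_{R[q,q^{-1}]}\sk(M^1)$. For any $t\in T$ with annihilator $u=1-q^{j}$, the identity $t\otimes 1=(ut)\otimes u^{-1}=0$ kills $t$ in the tensor product. Consequently the canonical map $\sk(M^1)\to\sk_\ell(M^1)$ factors as projection onto the free summand followed by the tautological inclusion $R[q,q^{-1}]\hookrightarrow R[q,q^{-1}]]$, and in particular sends $\langle\L\rangle$ to the image of $\langle\L\rangle_{free}$.

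Next, I would invoke Theorem \ref{thm:general skein module cat}, which identifies $\chi_q(Kh^*(\L))$ with $\langle\L\rangle_\ell\in\sk_\ell(M^1)$. Combining with the previous step yields $\chi_q(Kh^*(\L))=\langle\L\rangle_{free}$ inside $R[q,q^{-1}]]$; since the inclusion $R[q,q^{-1}]\hookrightarrow R[q,q^{-1}]]$ is injective, this identifies $\chi_q(Kh^*(\L))$ with the polynomial $\langle\L\rangle_{free}$ on the nose, as claimed.

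The main obstacle is verifying the shape of the torsion relations in $\sk(M^1)$. Once one knows that every torsion generator is annihilated by a polynomial of the form $1-q^{j}$ with $j\neq 0$ (or more generally by any polynomial with invertible image in $R[q,q^{-1}]]$), the rest of the proof is a formal consequence of base change together with Theorem \ref{thm:general skein module cat}. Extracting this combinatorial fact requires only inspecting the explicit presentation and normal form for $\sk(S^2\times S^1)$ given in \cite{S1S2skein}, so the obstacle here is really a bookkeeping check rather than a conceptual difficulty.
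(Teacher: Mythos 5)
Your proof is correct and follows essentially the same approach as the paper: both rely on the observation that the torsion annihilators in $\sk(M^1)$ (of the form $1-q^{2a}$) become units in the Laurent series ring $R[q,q^{-1}]]$, so the torsion dies and only the free part survives, which is then matched against $\chi_q(Kh^*(\L))$ via Theorem \ref{thm:general skein module cat}. Your phrasing via right-exactness of base change is a slightly more formal way to justify the paper's remark that ``the exact same arguments can be run'' over $R[q,q^{-1}]]$, but the underlying idea is identical.
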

\begin{proof}
The arguments in \cite{S1S2skein} show that, over $R[q,q^{-1}]$,
\[\langle \L \rangle = \langle \L \rangle_{free} + \mathrm{torsion}\]
where all of the torsion terms are annihilated by quantities of the form $(1-q^{2a})$.  The exact same arguments can be run in $\sk_\ell(M^1)$ over $R[q,q^{-1}]]$ but now the annihilators are all units, so that $\langle \L \rangle_\ell = \langle \L \rangle_{free}$.
\end{proof}
The computations in Sections \ref{sec:two unlinked longitudes} and \ref{sec:Whitehead} provide examples of this behavior (note that in Section \ref{sec:two unlinked longitudes}, we would get the same answer if the strands were oppositely oriented so that $\L$ were null-homologous).

\begin{remark}
The computation in Section \ref{sec:crossingless} provides a link $\L\subset M^2$ with $\chi_q(Kh^*(\L)) \notin R[q,q^{-1}]$, showing that Corollary \ref{cor:polys in M1} does not hold there.  This indicates that some aspect of Hoste's and Przytycki's argument must be altered in $\sk(M^r)$ over $R[q,q^{-1}]$: either there is more than one free summand (corresponding to some non-empty, null-homologous links, perhaps similar to the one in Figure \ref{fig:Lcrossingless}), or the torsion elements are annihilated by quantities that do not become units in $R[q,q^{-1}]$ (most likely corresponding to performing combinations of surgery wrap moves on differing surgery lines).
\end{remark}

For links $\L\subset M^r$ with non-zero algebraic intersection numbers $\eta_i$, our Khovanov homology groups are only well-defined up to grading shifts depending on the $\eta_i$.  From one point of view this would make the corresponding version of Theorem \ref{thm:general skein module cat} unsatisfactory, since $\chi_q(Kh^*(\L))$ would only recover $\langle \L \rangle_\ell$ up to multiples of $q^{3g}$, where $g=\gcd(\eta_1,\dots,\eta_r)$.  On the other hand, one could speculate that this ambiguity represents a naive categorical analogue of some aspect of the torsion in $\sk(M^r)$, whereby we lose a notion of absolute gradings on our homology groups, but do retain some notion of relative gradings which are wholly invisible on the decategorified level.

\begin{remark}\label{rmk:skein module odd int}
In \cite{S1S2skein}, Hoste and Przytycki provide computations in $\sk(S^2\times S^1)$ that show that any $\L$ having odd intersection number with the belt sphere represents a trivial element in the skein module.  Their methods generalize immediately to $M^r$ for any $r$, so we are tempted to define
\[Kh^*(\L):= 0\]
for all $\L$ having belt sphere intersection number $n_i$ odd for some $i$.  
We will see another justification for this extension in what follows.
\end{remark}

\subsection{The WRT invariant}\label{sec:WRT}
In this section we describe how, for a link $\L\subset M^r$, the graded Euler characteristic $\chi_q(Kh^*(\L))$ (evaluated at certain roots of unity) recovers the corresponding WRT invariant of $\L$ in $M^r$.  All of this material is a straightforward generalization of the arguments in the appendix of \cite{Roz}.

We begin by recalling some of the basic definitions (see \cite{KL} for a detailed account of these topics).  Fixing an integer $n$, there exist in the Temperley-Lieb algebra $TL_n$ certain idempotent elements $P_{n,d}$ for each $d$ such that $n-d\in2\Z_{\geq 0}$.  These are the Jones-Wenzl projectors \cite{Wenzl}, and they involve \emph{rational} functions of $q$ as their coefficients in $TL_n$.  These projectors satisfy the following properties:
\begin{itemize}
\item $P_n:=P_{n,n}$ is the usual Jones-Wenzl projector satisfying, for any Temperley-Lieb diagram $\delta$,
\[P_n\cdot \langle \delta \rangle = \langle \delta \rangle \cdot P_n = \begin{cases}
P_n & \text{if $\delta=\iota_n$, the identity diagram}\\
0 & \text{otherwise}
\end{cases};\]

\item $P_{n,d}\cdot \langle\delta\rangle = 0 $ if $\th(\delta) < d$;
\item $P_{n,d}$ is a linear combination of terms of the form $\langle \topp{\delta}\rangle \cdot P_d \cdot \langle \bott{\delta} \rangle$ for $\th(\delta)=d$ (see Definition \ref{def:delta top bot});
\item $\sum_d P_{n,d} = \langle \iota_d \rangle$, the identity element in $TL_n$.
\end{itemize}
Here we are using the notation $\langle \delta \rangle$ to indicate the element in $TL_n$ corresponding to the isotopy class of $\delta$.

With the help of the various $P_{n,d}$ we can define the WRT invariants \cite{RT,Wit} of a link $\L$ in a 3-manifold $M$.  Suppose $M$ can be obtained by performing surgery on a fixed (framed) link $\mathcal{L}_M\subset S^3$ (with diagram $L_M$).  Let $\L'$ be the link in $S^3$ (with diagram $L'$), disjoint from $\mathcal{L}_M$, that gives rise to $\L\subset M$ after the surgery is performed.  Then the $\rho^{th}$ WRT invariant of $\L$ in $M$, denoted by $Z_\rho(\L,M)$, can be computed as a sum of terms involving the Kauffman brackets of diagrams formed from $L'\cup L_M\subset S^3$ by cabling the surgery link $L_M$ and inserting Jones-Wenzl projectors into each component.  The sum is taken over various cablings from a 0-cabling (deleting the link $L_M$) through to a $(\rho-2)$-cabling.  After summing together these various Kauffman brackets, the expression is evaluated at $q=e^{i\pi/\rho}$ to arrive at the complex number $Z_\rho(\L,M)$.

For the case of $\L\subset M=M^r$ as illustrated in the previous sections, the links $\L'$ and $\mathcal{L}_M$ are clear.  Given $\L\subset M^r$, the preimage link diagram $L'$ is precisely what we have called $\Lzero$ in Section \ref{sec:def KC(L)}, and the surgery link diagram $L_{M^r}$ is just the unlink formed by placing 0-framed meridians encircling each surgery line.  In the case of such meridian surgeries, it can be shown (see the appendix in \cite{Roz}) that, so long as each intersection number $n_i$ of $\L$ with each belt sphere is even and satisfies $n_i\leq \rho-2$, the majority of the terms in the sum of Kauffman brackets for $Z_\rho(\L,M^r)$ disappear and the result is equivalent to \emph{deleting} the meridians and instead placing the projector $P_{n_i,0}$ along the cabling of what we have called the surgery lines $\sl_i$.  See Figure \ref{fig:computing WRT}.  Furthermore, if any of the intersection numbers $n_i$ is odd, the resulting WRT invariant is zero.

This gives a further justification for simply defining $Kh^*(\L):=0$ for links having odd intersection number with any of the belt spheres.  Then in order to confirm that our homology $Kh^*(\L)$ `categorifies' the WRT invariants (in the sense that evaluating the graded Euler characteristic of $Kh^*(\L)$ at $e^{i\pi/\rho}$ recovers $Z_\rho$), it is enough to show that the stable limiting complex associated to the infinite full twist as in Section \ref{sec:Infinite twist} categorifies $P_{n_i,0}$ in this same sense.  This is shown in the appendix to \cite{Roz}, but we repeat the argument here in our normalization.

\begin{figure}
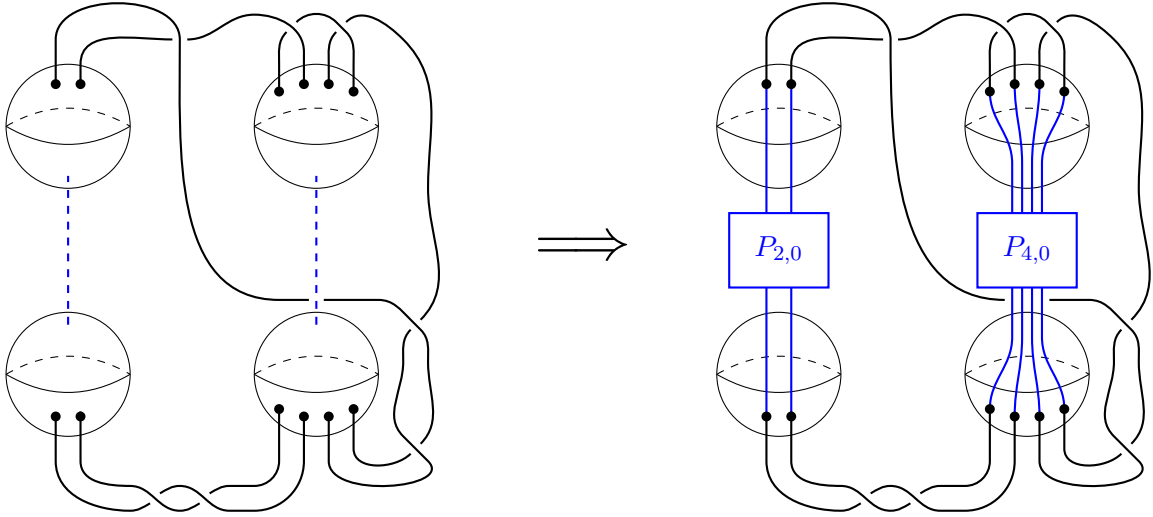

\centering
\LinMr
\hspace{.3in}
\resizebox{.5in}{!}{$\Longrightarrow$}
\hspace{.3in}
\LkinMr{P_{2,0}}{P_{4,0}}
\caption{To compute the WRT invariant $Z_\rho(\L,M^r)$, it is enough to use the diagram $\Lzero$ and insert $P_{n_i,0}$ instead of full twists into the cabled surgery lines, before taking the Kauffman bracket and evaluating at $q=e^{i\pi/\rho}$ (so long as $n_i\leq \rho-2$ for each $i$).}
\label{fig:computing WRT}
\end{figure}

We continue to use the notation $\langle \cdot \rangle$ to denote the unnormalized Kauffman bracket, which in $S^3$ is the same as the graded Euler characteristic of $\KC^*(\cdot)$ and satisfies equivalent formulas for shifts due to Reidemeister moves.  In particular, for any term $\langle \topp{\delta} \cdot P_d \cdot \bott{\delta} \rangle$ (allowing a slight abuse in the use of the notation $\langle\cdot\rangle$) in the linear combination for $P_{n,d}$, we must have $\topp{\delta}$ having $\frac{n-d}{2}$ matchings.  If we concatenate with a full twist $\F_n$, these matchings can be pulled through the twist giving (see Lemma \ref{lem:ft Reid shifts}; all homological shifts are even, and so have no effect on the Euler characteristic):
\begin{equation}\label{eq:Kauffman FT shift}
\langle \F_n \cdot \topp{\delta} \cdot P_d \cdot \bott{\delta} \rangle = q^{(n-d)\left(n+1-\frac{n-d}{2}\right)} \langle \topp{\delta} \cdot \F_d \cdot P_d \cdot \bott{\delta} \rangle.
\end{equation}
Now we consider the graded Euler characteristic of our limiting complex of Theorem \ref{thm:inf twist complex} for the infinite twist on $n=2p$ strands.
\begin{align*}
\lim_{k\rightarrow\infty} \chi_q(C^*(k)) &= \lim_{k\rightarrow\infty} \chi_q \left( \h^{-2kp^2}\q^{-2kp(p+1)} \KC_{\geq -(2k-1)}^*(\F_n^k) \right)\\
&= \lim_{k\rightarrow\infty} q^{-2kp(p+1)} \langle \F_n^k \rangle
\end{align*}
where we have discarded the truncation $(*\geq -(2k-1))$ since we are taking the stable limit of each coefficient in these (growing) polynomials, and any error terms get eliminated in this process.  We now multiply by the identity written as a sum of projectors, and then write each projector as a linear combination $P_{2p,2m}=\sum_{\delta} a_\delta \langle \topp{\delta} \cdot P_{2m} \cdot \bott{\delta} \rangle$ before using Equation \ref{eq:Kauffman FT shift}:
\begin{align*}
\lim_{k\rightarrow\infty} q^{-2kp(p+1)}\langle \F_n^k \rangle &= \lim_{k\rightarrow\infty} q^{-2kp(p+1)}\langle \F_n^k \rangle \cdot \left( \sum_{m=0}^p P_{2p,2m}\right)\\
&= \lim_{k\rightarrow\infty} q^{-2kp(p+1)} \sum_{m=0}^p \langle \F_n^k \rangle \cdot P_{2p,2m}\\
&= \lim_{k\rightarrow\infty} q^{-2kp(p+1)} \sum_m \sum_{\delta} a_\delta \langle \F_n^k \cdot \topp{\delta} \cdot P_{2m} \cdot \bott{\delta} \rangle\\
&= \lim_{k\rightarrow\infty} q^{-2kp(p+1)} \sum_m \sum_{\delta} a_\delta q^{2k(p-m)(p+m+1)}\langle \topp{\delta} \cdot \F_{2m}^k \cdot P_{2m} \cdot \bott{\delta} \rangle\\
&= \lim_{k\rightarrow\infty} \sum_m \sum_{\delta} a_\delta q^{-2k(m^2+m)}\langle \topp{\delta} \cdot \F_{2m}^k \cdot P_{2m} \cdot \bott{\delta} \rangle\\
&=\lim_{k\rightarrow\infty} \sum_m q^{-2k(m^2+m)} P_{2p,2m}
\end{align*}
where in the last line we have used the fact that $\langle \F_{2m} \rangle \cdot P_{2m} = P_{2m}$, which can be derived from the condition that $P_{2m}$ kills all turnbacks.  It is clear that, as $k\rightarrow\infty$, all terms for which $m\neq 0$ cannot contribute to a stable limit for any coefficient of the series, and so only the term $P_{2p,0}$ remains.

\section{Khovanov homology for the knotification of a link in $S^3$}\label{sec:knotification Kh}
We now change our viewpoint slightly.  Given a link $\L\subset S^3$ with $r+1$ components, there is a well-defined procedure to construct a \emph{knot} $\K_\L\subset M^r$ which will be called the \emph{knotification} of $\L$.  This can be used, for instance, to define knot Floer invariants for $\L$ \cite{OS}.  Although the construction of Section \ref{sec:Defining general Kh} provides a corresponding Khovanov homology theory for $\K_\L$, we wish to augment the construction to allow for direct computation from the diagram $L$ for $\L\subset S^3$, rather than first rearranging $M^r$ (and $\K_\L$ within it) into the standard position of Section \ref{sec:Defining general Kh}.

\subsection{The knotification of a link in $S^3$}\label{sec:knotification}
Given an oriented link diagram $L$ for a link $\L\subset S^3$ with $r+1$ components, we describe the process that produces a diagram $K_L$ for the knotification $\K_\L\subset M^r$ (see Section 2.1 in \cite{OS} for further discussion).

First we focus on constructing $\K_\L$.  We choose $r$ pairs of points $(P_1,Q_1),(P_2,Q_2),\dots,(P_r,Q_r)$ along $\L$ in such a way that the graph formed by identifying the points $P_i \cong Q_i$ on $\L$ is connected.  We view any pair $(P_i,Q_i)$ as an embedding of $\SSi$ into $S^3$ at the points $P_i$ and $Q_i$, on which we perform $S^0$ surgery.  The two strands of $\L$ that intersect these spheres are connected in an orientation preserving manner by a band running along the handle $(D^1\times S^2)_i$ (with an arbitrary amount of twisting), so that $\L$ has geometric intersection number $n_i=2$ with the belt sphere of the handle.  This gives us the knot $\K_\L$.

To construct the planar diagram, we begin by drawing straight surgery lines in $S^3$ connecting each $P_i$ to $Q_i$.  After a small isotopy as necessary, we can assume our link, our choices of points, and our surgery lines are in general position so that the surgery lines did not intersect, and after projecting down to the plane, we have no points of triple intersection (this precludes the placement of a point $P_i/Q_i$ at a crossing), no tangencies, and no cusps.  As usual, we keep track of over and under crossing data in this process, and we draw our surgery lines $\sl_i$ as dashed lines in blue.  We leave the $\sl_i$ unoriented, since $L$ intersects each attaching sphere in two oppositely oriented points, ensuring that $\eta_i=0$ for all $i$ (see Definition \ref{def:notations for surgery strands}, which can easily be adapted to our new situation).  The resulting diagram is denoted $K_L$, representing the knot $\K_\L\subset M^r$.  As before, we do not attempt to draw the handle.  See Figure \ref{fig:knotification ex} for an example.

It is proven in \cite{OS} that this process gives a well-defined function taking a link $\L\subset S^3$ to a knot $\K_\L\subset M^r$.  In particular, isotopies of $\L$ and alternative choices of the data result in isotopic knots in $M^r$.  Of particular interest here is the possibility of a handle-slide type of move which takes a chosen attaching point on one component of $\L$ and passes it through the handle attached to some other point on that same component of $\L$, as illustrated for a diagram $K_L$ in Figure \ref{fig:handleslide}.

\begin{figure}
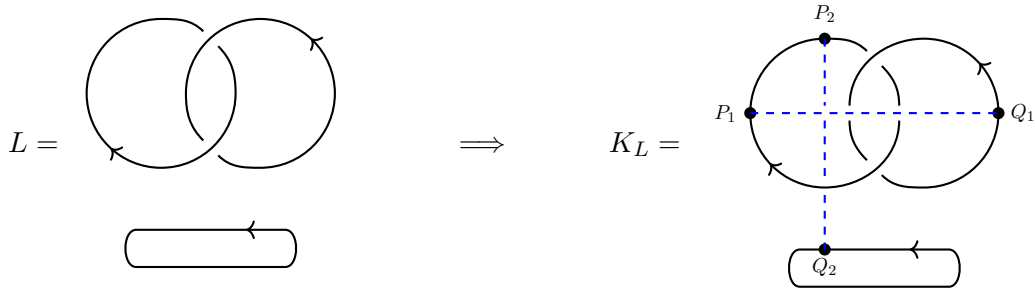

\[L=\knotificationExL \hspace{.5in} \Longrightarrow \hspace{.5in} K_L=\knotificationExKL\]
\caption{The oriented link diagram $L$ for $\L\subset S^3$ has three components.  We choose two pairs of points $(P_i,Q_i)$ to act as attaching spheres and draw surgery lines between each pair.  The resulting diagram $K_L$ represents a well-defined knot $\K_\L\subset M^2$.}
\label{fig:knotification ex}
\end{figure}

\begin{figure}
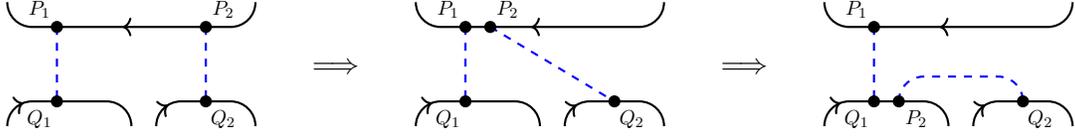

\[\hslideexA \hspace{.15in} \Longrightarrow \hspace{.15in} \hslideexB \hspace{.15in} \Longrightarrow \hspace{.15in} \hslideexC\]
\caption{A handle-slide move that lets one point, representing an attaching sphere, pass through the surgered handle corresponding to another point.  Note the orientations - with the ability to perform Reidemeister 1 moves on $L$ before a handle-slide, all such moves are diagrammatically equivalent to the one illustrated here.}
\label{fig:handleslide}
\end{figure}

\subsection{The Khovanov homology of the knotification}
We construct a Khovanov complex for our diagram $K_L$ in a manner analogous to the construction of Section \ref{sec:Defining general Kh}.  First, we delete $P_i$ and $Q_i$ from our link diagram $L$.  Then we replace each surgery line $\sl_i$ in our planar diagram $K_L$ with two parallel copies of the line (maintaining crossing data with other strands), and then orient the strands to match with the strands of $L$ at $P_i$.  If necessary, we add a half twist (ie a positive crossing) near $Q_i$ to allow these orientated strands to attach to $L$ at $Q_i$.  The resulting diagram could be denoted $\KLzero$.  We then choose a point on each $\sl_i$ (that is not a crossing point) at which to insert the complex $C^*(\F_2^\infty)$, and tensor this in the planar algebraic sense with the Khovanov complex for the rest of the diagram just as in Section \ref{sec:Defining general Kh}, and define the result as the \emph{Khovanov complex} of the diagram $K_L$.  Again we allow ourselves the abuse of notation to think of this as defining
\begin{equation}\label{eq:KC(KL) def}
KC^*(K_L):=KC^*(\KLinf).
\end{equation}
The \emph{Khovanov homology} $Kh^*(K_L)$ of the diagram $K_L$ is then the homology of this complex $KC^*(K_L)$.

Meanwhile for any vector $\vec{k}=(k_1,\dots,k_r)$ we use the notation $\KLk$ to denote the genuine knot diagram formed by inserting $\F_2^{k_i}$ at each insertion point along $\sl_i$.  See Figure \ref{fig:KLk example} for clarification.  We also use the notation $\KCsimp(\KLk)$ to denote the simplification of $\KC^*(\KLk)$ where the complexes $\KC^*(\F_2^{k_i})$ have been simplified to approximate Equation \ref{eq:two strand cx}.  This allows us to state the following versions of Proposition \ref{prop:fin approx} and Corollary \ref{cor:KC(Linf)=KC(Lk)} whose proofs are identical to the earlier versions and are omitted here.

\begin{figure}
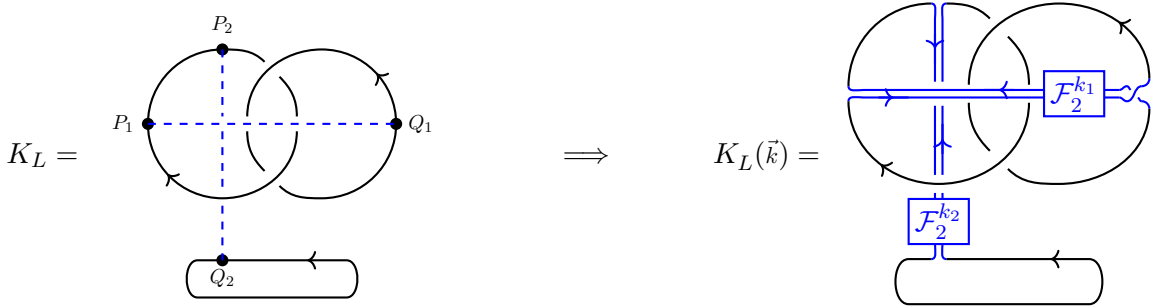

\[K_L=\knotificationExKL \hspace{.5in} \Longrightarrow \hspace{.5in} \KLk=\knotificationExKLk{$\F_2^{k_1}$}{$\F_2^{k_2}$}\]
\caption{Continuing the example of Figure \ref{fig:knotification ex}, we build a genuine knot diagram $\KLk$ from the oriented diagram $K_L$ (which itself came from the diagram $L$ for $\L\subset S^3$).  $\KLinf$ can be pictured similarly, with the complexes $C^*(\F_2^\infty)$ in place of the twists $\F_2^{k_i}$.}
\label{fig:KLk example}
\end{figure}

\begin{proposition}\label{prop:KL fin approx}
Given an $r+1$ component link diagram $L$ for $\L\subset S^3$ and an arbitrary homological lower bound $a$, there exists some finite $\vec{k}$ such that the truncated Khovanov complex $KC^*_{\geq a}(K_L)=\KCsimp_{\geq a}(\KLinf)$ is equal to the truncation of the finite approximation complex $\KCsimp_{\geq a}(\KLk)$.  In other words, $KC^*(K_L)$ can be approximated by a finite complex in any given homological range.
\end{proposition}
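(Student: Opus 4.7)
The plan is to mirror the proof of Proposition \ref{prop:fin approx} essentially verbatim, the only real adjustment being the bookkeeping of the surgery lines in $K_L$ versus those in a link diagram $L\subset P'$. By Equation \ref{eq:KC(KL) def}, the complex $KC^*(K_L)$ is defined as the planar algebraic tensor product of the two-term Khovanov complexes arising from the finitely many crossings of $\KLzero$ together with $r$ copies of $C^*(\F_2^\infty)$, one inserted along each surgery line $\sl_i$.

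The first step is to recall that Theorem \ref{thm:inf twist complex} presents each $C^*(\F_2^\infty)$ as the stable colimit
$$C^*(\F_2^\infty) \;=\; \hocolim_{k_i\to\infty} \KCsimp_{>-2k_i}(k_i)$$
of an actual nested sequence of subcomplex inclusions. Since the planar algebraic tensor product of Bar-Natan complexes is a levelwise direct-sum operation (each object being assembled in the canopoly of \cite{BN}), it commutes with direct limits and in particular preserves these subcomplex inclusions. Tensoring together the $r$ such sequences against the fixed bounded complex coming from the crossings already in $\KLzero$ then produces, for some initial $\vec{k_0}$, a sequence of inclusions
$$\KCsimp_{>c_0}(\KLk[\vec{k_0}]) \longhookrightarrow \KCsimp_{>c_1}(\KLk[\vec{k_0}+\vec{1}]) \longhookrightarrow \cdots$$
(taking the colimit diagonally for ease of notation), whose homotopy colimit is $KC^*(\KLinf)=KC^*(K_L)$. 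The truncation thresholds $c_i$ depend linearly on the entries of $\vec{k}$ and satisfy $c_i\to -\infty$.

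Given the homological lower bound $a$, one then chooses $\vec{k}$ componentwise large enough that $c(\vec{k})<a$; at that point the inclusion identifies $\KCsimp_{\geq a}(\KLk)$ with $\KCsimp_{\geq a}(\KLinf)$ on the nose. The only real bookkeeping is tracking the homological renormalization of Definition \ref{def:n- convention} coming from the finite twists inserted at each $\sl_i$, but since these shifts enter both sides of the identification identically (and linearly in the entries of $\vec{k}$), they do not disturb the equality of truncations. The step I would expect to warrant the most care — though it is not a genuine obstacle — is verifying that the planar tensor product respects nested inclusions of truncated complexes across all $r$ surgery lines simultaneously; this however follows directly from the levelwise nature of the canopoly composition and the additivity of planar operations in Bar-Natan's category.
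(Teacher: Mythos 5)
Your proposal is correct and follows essentially the same approach as the paper; in fact the paper states that the proof of Proposition~\ref{prop:KL fin approx} is ``identical'' to that of Proposition~\ref{prop:fin approx} and omits it. You have simply specialized that argument to the $n_i=2$ knotification setting and expanded the bookkeeping, with no substantive deviation.
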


\begin{corollary}\label{cor:KC(KLinf)=KC(KLk)}
Given an $r+1$ component link diagram $L$ for $\L\subset S^3$, the Khovanov homology $Kh^*(K_L)$ can be approximated in any finite range of homological degrees by the Khovanov homology of a genuine link diagram $\KLk$ for finite $\vec{k}$.  More precisely, given any homological bound $a$, there exists some $\vec{k}$ that depends on $a$ such that
\begin{equation}\label{eq:KC(KLinf)=KC(KLk)}
Kh^*(K_L) \cong Kh^*(\KLk)
\end{equation}
in all homological gradings $*\geq a$.
\end{corollary}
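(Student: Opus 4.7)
The plan is to mirror the proofs of Proposition \ref{prop:fin approx} and Corollary \ref{cor:KC(Linf)=KC(Lk)} almost verbatim, since the knotification setup is the special case of Section \ref{sec:Defining general Kh} in which every belt-sphere intersection number equals $2$ and every $\eta_i=0$. By Equation \ref{eq:KC(KL) def} and the construction opening Section \ref{sec:knotification Kh}, $KC^*(K_L)=KC^*(\KLinf)$ is the planar algebraic tensor product of the ordinary Khovanov complex of the crossings of $\KLzero$ with a copy of the infinite-twist complex $C^*(\F_2^\infty)$ inserted along each surgery line $\sl_i$, so everything reduces to understanding how those infinite-twist factors contribute to a given finite homological range.

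First I would use Theorem \ref{thm:inf twist complex} to present each $C^*(\F_2^\infty)$ as the stable homotopy colimit of the inclusions
\[\KCsimp_{>-2}(1)\hookrightarrow \KCsimp_{>-4}(2)\hookrightarrow\cdots\hookrightarrow \KCsimp_{>-2k}(k)\hookrightarrow\cdots.\]
Because planar algebraic tensor products with a finite complex commute with truncation and with colimits of inclusions in each slot, $KC^*(K_L)$ is itself the stable homotopy colimit of diagonally increasing inclusions
\[\KCsimp_{>c_0}(\KL(\vec{k_0}))\hookrightarrow \KCsimp_{>c_1}(\KL(\vec{k_0}+\vec{1}))\hookrightarrow \KCsimp_{>c_2}(\KL(\vec{k_0}+\vec{2}))\hookrightarrow\cdots\]
whose truncation points $c_j$ are finite sums of the individual truncation levels $-2k_{i,j}$ coming from each $\sl_i$, offset by the lowest homological degree contributed by the fixed remainder of $\KLzero$. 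Given any homological bound $a$, I would choose $j$ with $c_j<a$, at which point the inclusion into the colimit is the identity in degrees $*\geq a$; this yields the equality $KC^*_{\geq a}(K_L)=\KCsimp_{\geq a}(\KLk)$ for $\vec{k}=\vec{k_0}+\vec{j}$, proving Proposition \ref{prop:KL fin approx}.

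For Corollary \ref{cor:KC(KLinf)=KC(KLk)} I would then invoke item (i) of Theorem \ref{thm:inf twist complex} with $n=2$, $p=1$ to replace each factor $\KCsimp(k_i)$ by its chain homotopy equivalent shifted renormalized Khovanov complex $\h^{-2k_i}\q^{-4k_i}\KC^*(\F_2^{k_i})$, and tensor these equivalences together through the planar algebra. The result is a chain homotopy equivalence between $\KCsimp(\KLk)$ and an overall homological-and-$q$-shift of $\KC^*(\KLk)$; passing to homology in degrees $*\geq a$ then identifies $Kh^*(K_L)$ with $Kh^*(\KLk)$ in that range under the renormalization conventions of Definition \ref{def:n- convention}.

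The one step that requires genuine care — and the main obstacle — is verifying that the truncation points $c_j$ in the diagonal colimit really do tend to $-\infty$ rather than being capped by some interaction between the fixed crossings of $\KLzero$ and the several infinite twists simultaneously. This is exactly what item (ii) of Theorem \ref{thm:inf twist complex} is designed to supply: the fact that $\KCsimp_{>-2k}(k)$ and $\KCsimp_{>-2k'}(k')$ agree on their shared top homological degrees for $k'\geq k$ ensures that tensoring these stable inclusions across all $r$ surgery lines pushes the global truncation arbitrarily far to the left as $\vec{k}$ grows diagonally, so that every finite range is eventually stabilized.
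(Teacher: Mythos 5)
Your proposal is correct and follows the same route the paper takes: the paper states that the proofs of Proposition~\ref{prop:KL fin approx} and this corollary are ``identical to the earlier versions'' (Proposition~\ref{prop:fin approx} and Corollary~\ref{cor:KC(Linf)=KC(Lk)}), and then the only additional content in its proof of the corollary is the one-line observation that the grading shifts of Corollary~\ref{cor:KC(Linf)=KC(Lk)} vanish because every $\eta_i=0$. You flag $\eta_i=0$ at the outset and correctly track the shifts through Theorem~\ref{thm:inf twist complex}(i), so the argument goes through.

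One point of presentation worth tightening: your chain-level bookkeeping is phrased as ``$\h^{-2k_i}\q^{-4k_i}\KC^*(\F_2^{k_i})$,'' which is the $p_i=1$ specialization of the \emph{definition} $\KCsimp(k)=\h^{-2kp^2}\q^{-2kp(p+1)}C^*(\F_n^k)$ and is orientation-independent as written. The cancellation that actually makes Equation~\ref{eq:KC(KLinf)=KC(KLk)} come out shift-free only appears once you unravel $\KC^*$ to the unnormalized $KC^*$: with $\eta_i=0$ the two strands of each $\F_2$ are oppositely oriented, so $n_\F^-=2=2p^2$ and $N_\F=4=2p(p+1)$, and the shift in item (i) of Theorem~\ref{thm:inf twist complex} is identically zero. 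Making this final arithmetic explicit, rather than gesturing at ``renormalization conventions,'' would mirror the paper's proof exactly. I would also demote your final paragraph from ``the main obstacle'': the fact that the truncation points run to $-\infty$ is handled in the proof of Corollary~\ref{cor:KC(Linf)=KC(Lk)} without special comment, and the genuinely new observation in the knotification setting is precisely the shift cancellation.
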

\begin{proof}
Notice that in the case where all of the $\eta_i=0$ (so all of the crossings in $\F_2$ are negative), all of the grading shifts cancel.
\end{proof}

\subsection{Invariance of $Kh^*(K_L)$}
The goal of this section is two-fold.  On the one hand, we wish to show that our definition for $Kh^*(K_L)$ is indeed an invariant of $\K_L\subset M^r$.  Secondly, we expect our new construction to give the same homology as we would get by applying the results of Section \ref{sec:Defining general Kh} to our knot $\K_L$.  Fortunately, these two goals are related since our new construction can be related to our old one by planar isotopies corresponding to changing the projection $M^r\rightarrow \R^2$ (see Remark \ref{rmk:fixed projection}).

\begin{theorem}\label{thm:old style isotopies}
$Kh^*(K_L)$ is invariant under any of the relevant moves from Proposition \ref{prop:isotopic links} and Theorem \ref{thm:Kh invariance}.  Specifically, any planar isotopies, Reidemeister moves, point-pass moves (better thought of now as `$P_i$-pass' or `$Q_i$-pass' moves), and surgery-wrap moves which fix the surgery data $P_i,Q_i,\sl_i$ induce an isomorphism on $Kh^*(K_L)$ with no grading shifts.  $Kh^*(K_L)$ is also invariant under choice of insertion points for the $C^*(\F_2^\infty)$ along each $\sl_i$.
\end{theorem}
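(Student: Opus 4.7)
The plan is to mirror the strategy of Theorem \ref{thm:Kh invariance} essentially verbatim, exploiting the fact that for a knotification diagram we always have $\eta_i=0$ (the two strands meeting each attaching sphere are oppositely oriented by construction). First, I would invoke Corollary \ref{cor:KC(KLinf)=KC(KLk)} to reduce each claim to an isomorphism between $Kh^*(\KLk[1])$ and $Kh^*(\KLk[2])$ in an arbitrary finite range of homological degrees for some common choice of $\vec{k}$, where $K_{L_1}$ and $K_{L_2}$ differ by a single move of the type listed in the theorem. Since $\eta_i=0$, no homological or $q$-grading shift is incurred along the way.

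For moves that do not involve the surgery-wrap, the argument is even cleaner than in Theorem \ref{thm:Kh invariance}. A planar isotopy, a Reidemeister move among crossings of $L$, or a Reidemeister move among crossings of $L$ and the parallel strands replacing some $\sl_i$ all lift directly to planar isotopies or Reidemeister moves of the genuine link diagram $\KLk$ in $S^3$, and thus induce chain homotopy equivalences of $KC^*(\KLk)$ via Bar-Natan's invariance. A $P_i$-pass or $Q_i$-pass move likewise lifts to a genuine isotopy that slides a strand over or under the attaching point of $L$ with the parallel strands. Invariance under the choice of insertion point along $\sl_i$ follows from the centrality of the full twist in the braid group: the twist $\F_2^{k_i}$ may be slid freely along the parallel strands, passing over or under other strands via Reidemeister 3 moves, just as in the corresponding step of Theorem \ref{thm:Kh invariance}.

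The one step requiring genuine work is the surgery-wrap move, and here the argument is a direct application of Lemma \ref{lem:wrap cone preserved}. For a finite homological truncation, Proposition \ref{prop:KL fin approx} lets me replace the infinite complex along $\sl_i$ by a sufficiently large truncation $\KCsimp_{>-2k_i}(k_i)$, which by Theorem \ref{thm:inf twist complex}\eqref{it:th deg zero} consists of through-degree zero diagrams with no disjoint circles — exactly the hypothesis of Lemma \ref{lem:wrap cone preserved}. That lemma then provides a chain homotopy equivalence between the wrapped and unwrapped complexes, shifted by $\h^{n_i}\q^{n_i}$. Unwinding the normalization $\KC^*$ of Definition \ref{def:n- convention} exactly as in the proof of Theorem \ref{thm:wrapping move shift}, the overall shift on $KC^*$ is $\h^{\eta_i}\q^{3\eta_i}$, which vanishes because $\eta_i=0$ in the knotification setting.

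The hard part I expect is really just bookkeeping: confirming that the hypotheses of Lemma \ref{lem:wrap cone preserved} (through-degree zero, no disjoint circles) are preserved by the finite approximations coming from $C^*(\F_2^\infty)$, and verifying the grading cancellation once the $\eta_i=0$ condition is used. Everything else reduces to genuine $S^3$ link isotopies applied to $\KLk$, so there are no new homological-algebra subtleties beyond those already developed in Sections \ref{sec:Infinite twist} and \ref{sec:general Kh invariance}. Handle-slide invariance, being a move that alters the underlying projection of $M^r$ rather than fixing it, is explicitly excluded from the statement and will be addressed separately in a later section as promised in Remark \ref{rmk:fixed projection}.
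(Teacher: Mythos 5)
Your proposal is correct and follows essentially the same route as the paper: reduce everything to genuine $S^3$ isotopies of $\KLk$ via Corollary \ref{cor:KC(KLinf)=KC(KLk)} for the non-wrap moves, handle surgery-wraps via Lemma \ref{lem:wrap cone preserved} (i.e., via Theorem \ref{thm:wrapping move shift}), and observe that $\eta_i=0$ in the knotification setting kills all grading shifts. The paper's actual proof is just a terser statement of exactly this, with the additional (unneeded here) remark that mirror and finger moves could also be accommodated by blowing up $P_i,Q_i$ into genuine spheres.
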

\begin{proof}
The proof works exactly the same way as in Theorem \ref{thm:wrapping move shift} (utilizing the through-degree zero property of the terms in $C^*(\F_2^\infty)$) and Theorem \ref{thm:Kh invariance} (utilizing isotopies of genuine knots in $S^3$ now with the help of Corollary \ref{cor:KC(KLinf)=KC(KLk)}).  The grading shifts all depended on $\eta_i$, which are all zero in the case of $K_L$.  If we truly blow up the points $P_i,Q_i$ into genuine spheres and arrange the intersections of them with the link properly, we can also arrange for mirror moves and finger moves, but these are unnecessary for our arguments in this section.
\end{proof}

\begin{theorem}\label{thm:isotope surgery data}
$Kh^*(K_L)$ is invariant under any planar isotopy, Reidemeister move, $P_i/Q_i$-pass move, and surgery-wrap move that affects the surgery data $P_i,Q_i,\sl_i$.  See the various figures in the proof below for clarification.
\end{theorem}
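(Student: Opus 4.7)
The plan is to reduce every move affecting the surgery data to an isotopy of the finite-approximation knot diagrams $\KLk$ in $S^3$. Since $\eta_i=0$ for each $\sl_i$ (the endpoints at $P_i$ and $Q_i$ are oppositely oriented on $L$), Corollary \ref{cor:KC(KLinf)=KC(KLk)} identifies $Kh^*(K_L)$ in any finite homological range with $Kh^*(\KLk)$ for sufficiently large $\vec{k}$, with no grading shifts. Thus it suffices to produce, for each move on the surgery data of $K_L$, a corresponding sequence of planar isotopies and Reidemeister moves on $\KLk$ that realizes the effect of the move. The strategy closely parallels the proofs of Theorem \ref{thm:Kh invariance} and Theorem \ref{thm:old style isotopies}.

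First I would handle planar isotopies and Reidemeister moves of a surgery line $\sl_i$: in $\KLk$ these become planar isotopies and Reidemeister moves of the two parallel strands replacing $\sl_i$, with the twisting block $\F_2^{k_i}$ carried along. If the region being moved contains the insertion point for $\F_2^{k_i}$, I would first invoke Theorem \ref{thm:old style isotopies} to slide the insertion point to a neutral location along $\sl_i$, perform the move, then slide the twisting block back. Sliding $\F_2^{k_i}$ past crossings is accomplished by Reidemeister 3 moves, using the fact that the full twist is central in the braid group, exactly as in the mirror-move step of the proof of Theorem \ref{thm:Kh invariance}. Reidemeister moves between two distinct surgery lines $\sl_i$ and $\sl_j$ translate analogously to Reidemeister 2 and 3 moves among the four parallel strands they produce in $\KLk$.

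For a $P_i/Q_i$-pass move, where a strand of $L$ passes the attachment point of a surgery line onto $L$, the corresponding picture in $\KLk$ is an isotopy sliding the junction between $L$ and the two parallel copies of $\sl_i$ past the third strand, a finite sequence of Reidemeister 2 and 3 moves, directly analogous to the point-pass step in the proof of Theorem \ref{thm:Kh invariance}. For a surgery-wrap move that relocates $\sl_i$ by wrapping it around another strand, the corresponding picture in $\KLk$ has the two parallel copies of $\sl_i$ wrapping around that strand, again realizable by Reidemeister moves in $S^3$ and hence preserving the Khovanov complex.

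The main obstacle will be tracking the half-twist near $Q_i$ built into the definition of $\KLk$ to make orientations on the parallel copies match. Some moves on the surgery data (for example, sliding $P_i$ across a crossing or past another $P_j$) can force this half-twist to reappear or disappear; however, since orientations along $L$ are unchanged by any of the allowed moves, the parity of the total twisting on the pair of parallel copies of $\sl_i$ is preserved, and any extra full twist produced in the process can be absorbed into $\F_2^{k_i}$ by replacing $k_i$ with $k_i\pm 1$. Because the limit definition of $KC^*(K_L)$ is insensitive to finite shifts of $k_i$ within any chosen truncation range (Proposition \ref{prop:KL fin approx}), this absorption does not affect $Kh^*(K_L)$, completing the argument.
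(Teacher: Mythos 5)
Your proposal correctly handles planar isotopies, Reidemeister moves, and $P_i/Q_i$-pass moves by translating them into honest isotopies of the finite diagrams $\KLk$ in $S^3$, and the observation about using centrality of the full twist to move insertion points matches the paper's argument.  However, the treatment of the surgery-wrap move contains a genuine gap.  You assert that a surgery-wrap of $\sl_i$ around another strand is ``again realizable by Reidemeister moves in $S^3$ and hence preserving the Khovanov complex.''  This is not the case when the strand being wrapped around is another surgery line $\sl_j$ (the case the theorem is specifically about).  In $\KLk$, the two parallel copies of $\sl_i$ must pass \emph{around} the $n_j$ parallel strands of $\sl_j$ together with the twist $\F_2^{k_j}$ inserted there, and this changes the isotopy type of $\KLk$ in $S^3$ for every finite $k_j$.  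Invariance holds only in the limit and depends crucially on the fact that the truncated complex $\KCsimp_{>-2k_j}(k_j)$ is built from through-degree-zero diagrams, so that the wrapping strand can be slid out via cup-sliding isotopies one multicone term at a time.  This is exactly the content of Lemma \ref{lem:wrap cone preserved} (via Corollary \ref{cor:Consistent Cone of Concats} and the very strong deformation retracts of Lemma \ref{lem:R1 and R2 are VSDR}), together with the sign-consistency argument of Lemma \ref{lem:circle wrap is identity}, all of which the paper's proof explicitly invokes.  An argument that treats this step as an $S^3$ Reidemeister move would, for instance, also appear to ``prove'' that wrapping a strand around a finite twist region is an isotopy, which is false.

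Separately, the proposal does not address Reidemeister 1 moves on a surgery line, which create a framing twist on the pair of blue strands; the paper reduces this to the surgery-wrap case via the Jucys--Murphy element trick of Figure \ref{fig:other wrapping moves}.  Your final paragraph about absorbing a full twist into $\F_2^{k_i}$ by replacing $k_i$ with $k_i\pm 1$ gestures in the right direction, but Proposition \ref{prop:KL fin approx} by itself only says the limit exists; it does not immediately say that the limit is unchanged if you shift the value of $k_i$ by one \emph{after} the chain complex has been fixed, so this also requires the stabilization property (Theorem \ref{thm:Fk to Fk+1}) or a reduction to the already-checked surgery-wrap move as in the paper.  These two omissions are where the real work of the theorem lives.
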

\begin{proof}
The proofs for planar isotopies and Reidemeister 2 and 3 moves work precisely the same as before.  We use Corollary \ref{cor:KC(KLinf)=KC(KLk)} and seek simple isotopies of genuine knots.  A version of Reidemeister 2 is illustrated below:
\[\ReidExaOLD \cong \ReidExbOLD \hspace{.5in} \Longrightarrow \hspace{.5in} \ReidExcOLD \cong \ReidExdOLD .\]
We can use the same approach for $P_i/Q_i$-pass moves and any isotopies involving them, as illustrated below:
\[\PiPassExa \cong \PiPassExb \cong \PiPassExc \hspace{.5in} \Longrightarrow \hspace{.5in} \PiPassExd \cong \PiPassExe \cong \PiPassExf .\]
Note that we can always move insertion points so that the twists $\F_2^{k_i}$ do not need to be included in these local pictures (although clearly they would not cause problems if they were present).

Similarly, surgery-wrap moves involving a second surgery line (which in turn allow surgery lines to pass through each other or through themselves) are handled in the exact same way as they were in Theorem \ref{thm:wrapping move shift}, where here it is important that we begin by moving the insertion points for the complexes $C^*(\F_2^\infty)$ so that only one is visible in the diagram.  Rather than going through the details again, we simply show the diagrammatic version and refer the reader back to the proof of Theorem \ref{thm:wrapping move shift}.
\[
\dashdashwrap \cong \dashdisjointdash \hspace{.5in} \Longrightarrow \hspace{.5in} \dashdashwrapB{$\KCsimp_{>-2k_i}(k_i)$} \cong \dashdisjointdashB{$\KCsimp_{>-2k_i}(k_i)$}
\]
The essential idea is that, since $\KCsimp_{>-2k_i}(k_i)$ is a complex made up of nothing but through-degree zero diagrams $\delta$, we have cup-sliding isotopies giving very strong deformation retracts throughout the multicone expansion for $\KCsimp_{>-2k_i}(k_i)$ that result in a multicone of single-term complexes.  An additional argument is needed to ensure that we can give consistent signs to the resulting maps in order to keep our multicone truly unaltered - this is handled by choosing a consistent bottom closure $\gamma$ for the diagram at hand, and noting that the isotopies could involve sliding the matchings of $\gamma$ instead.  These two choices are isotopic as tangle cobordisms up to a sequence of cobordisms of the form $\rho_\circ$ which induce identity maps on the complexes (see Lemma \ref{lem:circle wrap is identity}).  The projective functoriality of Bar-Natan's constructions then tell us that the two choices for each isotopy induce homotopic maps up to a sign, producing our set of consistent signs for our maps.  The grading shifts work exactly the same as before.

Finally, we note that a Reidemeister 1 move on a surgery line creates a framing twist, which is equivalent to adding a full twist along blue strands in $K_L$.  However, in the case of having only two parallel strands, a full twist is equivalent to a Jucys-Murphy element which can be added by a surgery wrap with one of the incoming strands as in Figure \ref{fig:other wrapping moves}, and we have already checked general surgery wrapping moves.  So we are done.
\end{proof}

\begin{corollary}\label{cor:Khs are the same}
Given the knotification $\K_L\subset M^r$ of a link $\L\subset S^3$, the homology groups $Kh^*(K_L)$ constructed in this section are isomorphic to the homology groups $Kh^*(\K_\L)$ constructed in Section \ref{sec:Defining general Kh}.
\end{corollary}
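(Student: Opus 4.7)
My plan is to establish a diagrammatic bridge: starting from a knotification diagram $K_L$ drawn in the ambient $S^3$ plane with punctures $P_i, Q_i$, I will produce a standard-position diagram $L'$ for $\K_\L \subset M^r$ of the sort treated in Section \ref{sec:Defining general Kh}, and then verify that both constructions assign the same Khovanov complex (up to chain homotopy equivalence) to this bridge. The crucial simplification is that for any knotification we always have $n_i = 2$ and $\eta_i = 0$, so no finger moves are required, no wrapping-move grading shifts survive, and the shifting data $(p_i, n_i^-, N_i, \eta_i) = (1, 0, 0, 0)$ is completely trivial on each surgery line.

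First I would construct $L'$ from $K_L$ by blowing up each puncture $P_i$ into a small attaching sphere containing the linearly arranged pair of mirrored points demanded by Definition \ref{def:L standard position}, and similarly at $Q_i$. The single strand of $L$ passing through $P_i$ becomes two parallel strands entering the sphere; the orientations automatically match across the handle because the band used in the knotification procedure is orientation-preserving, and any residual half-twist present near $Q_i$ in the convention of Section \ref{sec:knotification Kh} can be absorbed into a mirror move on the two-strand braid emerging from the spheres, using that $\F_2$ is central in the two-strand braid group. With this dictionary, $L'$ represents the same framed oriented link in $M^r$ as $K_L$, and the two complexes $KC^*(K_L)$ and $KC^*(L')$ are assembled by exactly the same recipe: insert $C^*(\F_2^\infty)$ along each surgery line and take the planar tensor product with the rest of the diagram.

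Next I would compare finite approximations using Corollary \ref{cor:KC(Linf)=KC(Lk)} and Corollary \ref{cor:KC(KLinf)=KC(KLk)}. In any finite homological range, both invariants reduce to the Khovanov homology of a genuine link diagram in $S^3$ obtained by inserting $\F_2^{k_i}$ on each $\sl_i$, and the two such diagrams arising from $K_L$ and $L'$ respectively are related by a genuine $S^3$-isotopy that does not alter any shifting data. Thus their renormalized Khovanov complexes agree, and the grading shifts in the two corollaries collapse to the same constant. Passing to the stable limit then yields $Kh^*(K_L) \cong Kh^*(\K_\L)$ in every homological degree, and the invariance results already established (Theorem \ref{thm:Kh invariance} on the standard-position side, and Theorems \ref{thm:old style isotopies} and \ref{thm:isotope surgery data} on the knotification side) guarantee this isomorphism is independent of all the choices made.

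The main point I expect to require care is the interaction between the half-twist convention at $Q_i$ and the mirror-point arrangement at the attaching sphere: I must verify that, after inserting $\F_2^{k_i}$ on each surgery line, the two resulting $S^3$ diagrams really are planar-isotopic without introducing any parity mismatches or hidden framing twists. This reduces to a finite diagrammatic check using centrality of $\F_2$ together with mirror-move invariance from Proposition \ref{prop:isotopic links}, rather than any new homological computation, so the obstacle is essentially bookkeeping rather than a genuine theoretical hurdle.
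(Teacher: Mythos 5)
Your approach is essentially the same as the paper's: isotope $K_L$ (including its surgery data) into a standard-position diagram $L'$ of the type treated in Section \ref{sec:Defining general Kh}, observe that the two constructions literally coincide once there, and invoke Theorems \ref{thm:old style isotopies} and \ref{thm:isotope surgery data} to justify the isotopy. The paper's own proof is a two-sentence version of your argument plus a reference to Figure \ref{fig:Khs are the same}. Your added detour through finite approximations and Corollaries \ref{cor:KC(Linf)=KC(Lk)} and \ref{cor:KC(KLinf)=KC(KLk)} is redundant but harmless --- once $K_L$ and $L'$ represent the same knot in the same standard projection, no finite-$\vec{k}$ comparison is needed.

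One small factual slip worth fixing: you assert the shifting data is $(p_i, n_i^-, N_i, \eta_i) = (1,0,0,0)$, but with $n_i=2$ and $\eta_i=0$ the two strands through each surgery line are oppositely oriented, so \emph{both} crossings of $\F_2$ are negative. The correct values are $(p_i, n_i^-, N_i, \eta_i) = (1,2,4,0)$, and it is the combinations $n_i^- - 2p_i^2 = 0$ and $N_i - 2p_i(p_i+1) = 0$ that vanish. With your stated values of $n_i^- = 0$, $N_i = 0$ those combinations would equal $-2$ and $-4$, contradicting your conclusion that the shifts collapse to the same constant. The conclusion is right; the intermediate numbers are not.
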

\begin{proof}
With the ability to isotope the link and the surgery data $P_i,Q_i,\sl_i$, it is not hard to see that any diagram $K_L$ can be isotoped into a diagram which matches the format of the diagrams of Section \ref{sec:Defining general Kh}, at which point the constructions are precisely the same.  See Figure \ref{fig:Khs are the same} for an illustration using our previous example diagram.
\end{proof}

\begin{figure}
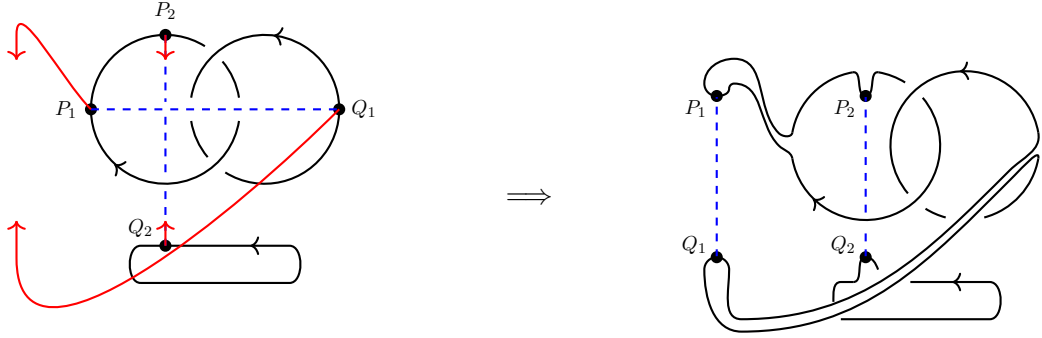

\[
\KLtoStandardA \hspRarrow{.5in} \KLtoStandardB
\]
\caption{The isotopy moving $K_L$ from Figure \ref{fig:knotification ex} into the standard position of Section \ref{sec:Defining general Kh}, from which point the two constructions are equivalent (after replacing the points $P_i,Q_i$ with small attaching spheres).  Recall that surgery lines can pass through any strands of the link and each other, so there is no trouble in isotoping them into the desired position.}
\label{fig:Khs are the same}
\end{figure}

Finally, we turn to the question of invariance under the genuinely new diagrammatic move available in this section, the handle-slide.  This move requires an extension of Lemma \ref{lem:Moving Dots} from Section \ref{sec:Moving Dots} for moving dots past crossings in a diagram.  That lemma focused on changing a single dotted cobordism near a crossing.  The following lemma concerns the case where we have many copies of the same dotted cobordisms, and we would like to change them all.

\begin{lemma}\label{lem:Moving Dots in Cone}
Consider a multicone of the form
\[\C^*:= \mCone{\mathcal{Z}_x}{\mathcal{Z}_y}{\C^*} {\mathcal{Z}_x} {\phi_{x,y}} {\mathcal{Z}_y}\]
where we have:
\begin{itemize}
\item All of the diagrams are the same tangle: $\mathcal{Z}_x=\mathcal{Z}_y=\mathcal{Z}$ for all $x$ and $y$.
\item There is a single crossing in $\mathcal{Z}$ such that, for all $x,y$, $\phi_{x,y}=\crossingdotTL + (-1)^{\h_{\C^*}(\mathcal{Z}_x)} \tilde{\phi}$ for some fixed $\tilde{\phi}$ that is a sum of dotted cobordisms that are identity near the crossing.  In other words, every map $\phi_{x,y}$ includes the same dotted identity cobordism near a crossing as one of its summands, while the other summands alternate as we traverse the multicone.
\end{itemize}
Then we have
\[\C^*\simeq \mCone{\mathcal{Z}_x}{\mathcal{Z}_y}{\C^*} {\mathcal{Z}_x} {(\phi_{x,y}')} {\mathcal{Z}_y}\]
where
\[\phi_{x,y}' = -\crossingdotBR + (-1)^{\h_{\C^*}(\mathcal{Z}_x)} \tilde{\phi}.\]
In other words, we are free to apply Lemma \ref{lem:Moving Dots} to the entire multicone at once.
\end{lemma}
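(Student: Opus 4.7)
The plan is to construct an explicit chain map $\Psi$ from the first multicone $\C^*$ to the second, of the form $\Psi = I + T$, where $I$ is the identity on each common term $\KC^*(\mathcal{Z}_x) = \KC^*(\mathcal{Z})$ and $T$ is built from signed copies of the saddle homotopy $s$ supplied by Lemma \ref{lem:Moving Dots}. Recall that $s$ is a local saddle near the distinguished crossing satisfying $ds + sd = \crossingdotTL + \crossingdotBR$. Since all of the $\mathcal{Z}_x$ equal the same tangle $\mathcal{Z}$, a single $s$ works globally; for each pair $(x,y)$ with $\h_{\C^*}(\mathcal{Z}_y) - \h_{\C^*}(\mathcal{Z}_x) = 1$ I set $T_{x,y} = \sigma(x)\cdot s$ with signs $\sigma(x) \in \{\pm 1\}$ chosen according to the multicone sign convention so that $T$ has total degree zero.

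Checking $\Psi$ is a chain map splits into a one-step condition and a two-step condition. The one-step condition, after sign tracking, reduces on each $A_x \to A_y$ component to $ds + sd = \crossingdotTL + \crossingdotBR$, which is exactly Lemma \ref{lem:Moving Dots} and supplies the required difference $\phi_{x,y} - \phi_{x,y}' = \crossingdotTL + \crossingdotBR$. The two-step condition $T\phi - \phi' T = 0$ on two-step components is where the hypothesized alternating sign $(-1)^{\h_{\C^*}(\mathcal{Z}_x)}$ on $\tilde{\phi}$ plays its essential role: since $\tilde{\phi}$ is identity near the crossing where $s$ acts, $s$ commutes with $\tilde{\phi}$, and combined with the alternation in $\sigma$ and in $\tilde{\phi}$ the $\tilde{\phi}$ contributions from $T\phi$ and $\phi' T$ cancel in matched pairs between successive multicone levels. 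What remains is a purely crossing-local identity between compositions of $s$ with $\crossingdotTL$ and $\crossingdotBR$, which I expect to follow from the neck-cutting relation just as in the proof of Lemma \ref{lem:Moving Dots} itself.

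A homotopy inverse is built symmetrically from the same $s$, and the two compositions reduce to the identity once one arranges (or verifies) the side conditions on $s$ of Definition \ref{def:VSDR}, which are standard for the saddle homotopy coming from neck-cutting. The main obstacle I anticipate is global sign bookkeeping in the two-step condition: Lemma \ref{lem:Moving Dots} is purely local to a single crossing, and a naive pointwise application would leave residues propagating between successive multicone levels, forcing the introduction of higher-degree terms in $\Psi$. The hypothesis that $\tilde{\phi}$ carries the alternating sign $(-1)^{\h_{\C^*}(\mathcal{Z}_x)}$ is precisely what is designed to make these residues cancel, so the heart of the proof amounts to a careful sign check confirming that $T$ can be taken supported only on adjacent pairs of multicone levels and that the analogue of the tower of Figure \ref{fig:Consistent Cone of Concats proof} collapses.
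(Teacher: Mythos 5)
Your approach matches the paper's own: the equivalence is built from the identity on each common term $\KC^*(\mathcal{Z})$ together with alternating signed copies $\pm s$ of the saddle homotopy from Lemma \ref{lem:Moving Dots} as the diagonal off-step maps, and the verification splits into the one-step condition (exactly Lemma \ref{lem:Moving Dots}) and the two-step cancellation forced by the alternating signs on $\tilde{\phi}$, which is precisely what Figure \ref{fig:Moving Dots in Cone} encodes. One minor simplification: you do not need the side conditions of Definition \ref{def:VSDR} to produce a homotopy inverse, since $\Psi = I + T$ with $T$ strictly raising the multicone filtration is automatically invertible via the finite sum $I - T + T^2 - \cdots$, and an invertible chain map has a chain-map inverse for free.
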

Of course, there is a similar statement for passing a dot along an over-crossing throughout a multicone as well.
\begin{proof}
The simplistic form of the diagrams and maps involved means that the homotopy of Lemma \ref{lem:Moving Dots} can be repeated throughout the mapping cone, and the resulting diagram will commute as necessary giving a chain homotopy equivalence between the two multicones.  See Figure \ref{fig:Moving Dots in Cone}.  Note that the alternating signs of the `extra' summands $\tilde{\phi}$ allow the diagonal homotopies to commute with the other maps as needed (ie there are no higher homotopies required).  The details are left to the reader.

\MovingDotsConeCDprep
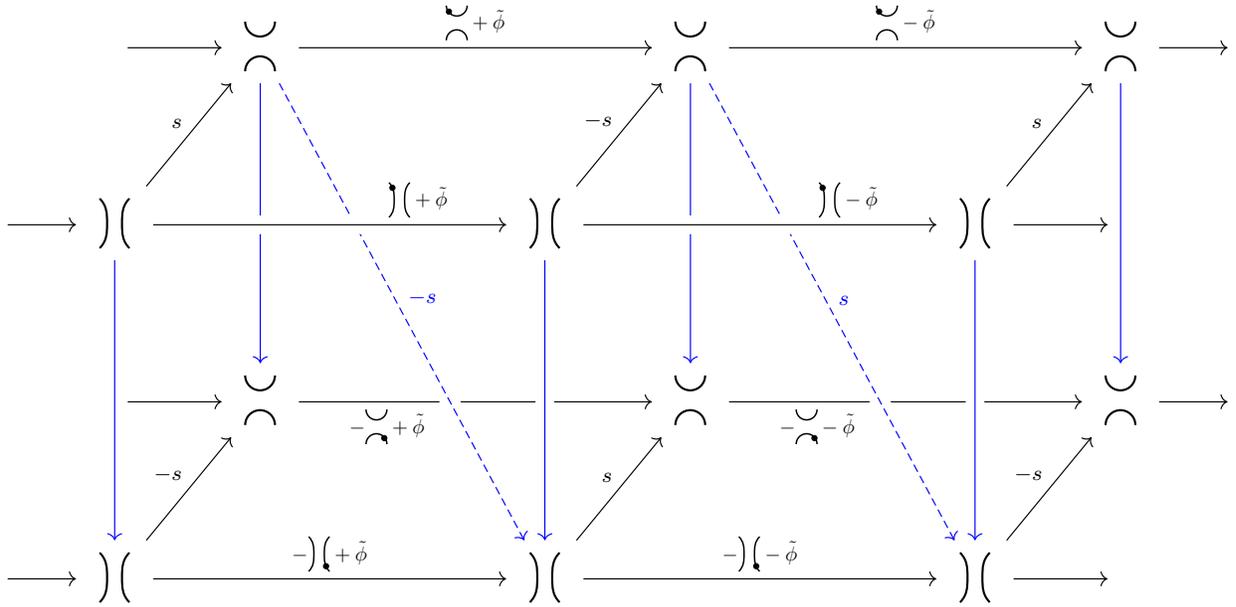
\begin{figure}
\[
\begin{tikzcd}
& \arrow[r] & \usebox{\boxB} \arrow[rrrr, "{ \usebox{\boxBD} }"] \arrow[dddd, blue]  & & & & \usebox{\boxB} \arrow[rrrr, "{ \usebox{\boxBDa} }"] \arrow[dddd, blue] & & & & \usebox{\boxB} \arrow[dddd, blue] \arrow[r] & {}\\
\\
\arrow[r] & \usebox{\boxA} \arrow[ruu, "s"] \arrow[dddd, blue] & & & & \usebox{\boxA} \arrow[ruu,  "-s"] & & & & \usebox{\boxA} \arrow[ruu,  "s"] \arrow[r] & {} & \\
\\
& \arrow[r] & \usebox{\boxB} \arrow[rrrr, "{ \usebox{\boxBDb} }", swap, near start]  & & & & \usebox{\boxB} \arrow[rrrr, "{ \usebox{\boxBDc} }", swap, near start] & & & & \usebox{\boxB} \arrow[r] & {}\\
\\
\arrow[r] & \usebox{\boxA} \arrow[rrrr, "{ \usebox{\boxACb} }"] \arrow[ruu, "-s"]  & & & & \usebox{\boxA} \arrow[rrrr, "{ \usebox{\boxACc} }"] \arrow[ruu, "s"] & & & & \usebox{\boxA} \arrow[ruu, "-s"] \arrow[r] & {} & 
\arrow[from=1-3, to=7-6, blue, dashed, crossing over, "-s"]
\arrow[from=1-7, to=7-10, blue, dashed, crossing over, "s"]
\arrow[from=3-6, to=7-6, blue, crossing over]
\arrow[from=3-10, to=7-10, blue, crossing over]
\arrow[from=3-2, to=3-6, crossing over, "{ \usebox{\boxAC} }", near end]
\arrow[from=3-6, to=3-10, crossing over, "{ \usebox{\boxACa} }", near end]
\end{tikzcd}
\]
\caption{The diagram illustrating Lemma \ref{lem:Moving Dots in Cone}.  The top faces, which we imagine continuing on in either direction (but not indefinitely), represent the multicone expansion of $\C^*$, while the bottom faces are the same but replacing $\phi$ with $\phi'$ by changing the sign of the specified dotted identity cobordism.  The blue maps combine to provide the chain homotopy equivalence between these two multicones.  The unmarked maps are identity maps, while the $s$ stands for the obvious saddle cobordism.}
\label{fig:Moving Dots in Cone}
\end{figure}

\end{proof}

\begin{theorem}\label{thm:Kh preserved under handle slides}
Given a link diagram $L$ for $\L\subset S^3$, let $K_L^1$ and $K_L^2$ be two knotification diagrams coming from $L$ such that one is obtained from the other by a handle-slide move as in Figure \ref{fig:handleslide}.  Then we have an isomorphism
\[Kh^*(K_L^1) \cong Kh^*(K_L^2).\]
\end{theorem}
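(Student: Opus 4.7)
The plan is to reduce the problem to a local chain homotopy equivalence via finite approximations, then exploit the explicit through-degree-zero structure of the $2$-strand infinite twist recorded in Equation \eqref{eq:two strand cx}, with Lemma \ref{lem:Moving Dots in Cone} handling the sign bookkeeping.

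First, by Corollary \ref{cor:KC(KLinf)=KC(KLk)} it suffices to prove $\KCsimp_{\geq a}(\KLk[1]) \simeq \KCsimp_{\geq a}(\KLk[2])$ for each homological bound $a$ and sufficiently large $\vec{k}$. Using the freedom from Theorem \ref{thm:old style isotopies} to reposition the insertion point along the surgery line $\sl_i$ through which the handle-slide occurs, I would place the twist $\F_2^{k_i}$ so that the entire handle-slide modification is concentrated in a local neighborhood containing it. Then $\KLk[1]$ and $\KLk[2]$ differ only in how some parallel strands (coming from another surgery line $\sl_j$, together with any strands of $L$ that travel alongside them) pass through this local region: in $\KLk[1]$ they stay on one side of $\F_2^{k_i}$, while in $\KLk[2]$ they have been pushed across, acquiring crossings with both of its two parallel strands.

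Second, I would expand the local complex $\KCsimp_{>-2k_i}(k_i)$ as a multicone. Theorem \ref{thm:Fk to Fk+1} guarantees that every diagram in this truncation has through-degree zero and no disjoint circles; Equation \eqref{eq:two strand cx} shows each such diagram is the horizontal resolution $\hres$, with differentials alternating between $\hresdotT + \hresdotB$ and $\hresdotT - \hresdotB$ as one ascends in homological degree. Consequently, in each term of the multicone presentation of $\KCsimp(\KLk[2])$, the handle-slid strands pass not through two transverse strands but through a cup-cap pattern, so a cup-sliding isotopy built from Reidemeister $1$ and $2$ moves (as used in Lemma \ref{lem:ft Reid shifts} and Lemma \ref{lem:wrap Reid shifts}) removes the handle-slide crossings term-by-term. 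These moves eliminate positive and negative crossings in balanced pairs, so no grading shifts appear, consistent with the fact that $\eta_i = 0$ for every surgery line in a knotification.

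Third, the crux is to verify that after this term-by-term simplification, the multicone differentials agree with those of the local picture of $\KCsimp(\KLk[1])$. In each term the cup-slide effectively pushes the dotted saddle summands $\hresdotT$ and $\hresdotB$ past the handle-slide crossings, which by Lemma \ref{lem:Moving Dots} contributes a sign change. By Lemma \ref{lem:Moving Dots in Cone}, such a sign change can be absorbed uniformly across the multicone provided the remaining summands alternate with homological parity $(-1)^{\h}$, and this is precisely the $\pm$ alternation already present in Equation \eqref{eq:two strand cx}. Thus the signs introduced by cup-sliding cancel exactly against the built-in alternation of the $C^*(\F_2^\infty)$ differential, producing a genuine chain homotopy equivalence between the two local multicones, and hence $Kh^*(K_L^1) \cong Kh^*(K_L^2)$.

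The principal obstacle is the global sign-matching in the third step. Earlier invariance proofs (Theorems \ref{thm:old style isotopies} and \ref{thm:isotope surgery data}) succeed because their ambient complexes are one-term, so any leftover sign discrepancy can be absorbed into a single global choice, as in the closing-by-$\gamma$ trick of Lemma \ref{lem:ft cone preserved}. Here the differential of $C^*(\F_2^\infty)$ itself alternates, so no global rescaling is available; Lemma \ref{lem:Moving Dots in Cone} was introduced precisely to circumvent this, and carefully matching the dot-move sign pattern against the $(-1)^{\h}$ alternation of the $\F_2^\infty$ differential is the genuine combinatorial content of the theorem.
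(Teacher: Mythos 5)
There is a genuine gap in your approach, rooted in a misreading of what the handle-slide actually does to the diagrams $\KLk[1]$ and $\KLk[2]$.  You describe the difference as the $\sl_2$-strands being ``on one side of $\F_2^{k_i}$'' versus ``pushed across,'' and propose to remove the discrepancy by cup-sliding those strands through the cup-cap terms of the complex inserted along the line being slid through (call it $\sl_1$).  But a handle-slide does not push any strands past a twist: it moves the \emph{attaching point} $P_2$ of the second surgery line from one foot of the $\sl_1$-handle to the other.  In the multicone where $\sl_1$ is resolved into a cup-cap $\hres$, the very path along which $P_2$ would have to travel (through the handle, i.e.\ along the two parallel $\sl_1$-strands) has been disconnected by the cup and cap, so there is no cup-sliding isotopy carrying a term of $\KLk[1]$ to the corresponding term of $\KLk[2]$.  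The sign-bookkeeping via Lemma \ref{lem:Moving Dots in Cone} then has nothing to bookkeep.

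The paper's proof does the opposite swap: it keeps a \emph{genuine} braid $\F_2^{k_1}$ on $\sl_1$ (so the handle path stays connected), places the truncated complex on $\sl_2$ instead, and first isotopes $K_L^1$ to an intermediate $K_L^{1.5}$ in which $\sl_2$ runs parallel alongside $\sl_1$ down to $Q_1$ with matching crossing data.  Then the \emph{dot} in each dotted structure map of the $\sl_2$-complex is slid, via Lemma \ref{lem:Moving Dots in Cone}, around the loop going up $\sl_2$, over near $P_1$, back down $\sl_1$, and through $\F_2^{k_1}$; because $\sl_1$ and $\sl_2$ share the same crossing data along the parallel stretch and $\F_2^{k_1}$ contributes $2k_1$ crossings, the dot passes an even number of crossings and all the sign changes cancel.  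Only after the dot is relocated can the $\sl_2$-finger be retracted by a plain planar isotopy, yielding $\KLk[2]$.  So while you correctly identified Lemma \ref{lem:Moving Dots in Cone} as the crucial technical tool and recognized that the $(-1)^{\h}$ alternation of the $2$-strand complex is what makes it applicable, the decomposition you built around it would not close; the intermediate diagram $K_L^{1.5}$, the decision to keep $\sl_1$ un-resolved, and the even-crossing-count for the dot's loop are the missing ingredients.
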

\begin{proof}
Call the two surgery lines involved $\sl_1$ (running between $P_1$ and $Q_1$) and $\sl_2$ (running between $P_2$ and $Q_2$), where $P_2$ is the point that is slid through the handle of $\sl_1$ from $P_1$ to $Q_1$ (this is the case illustrated in Figure \ref{fig:handleslide}). Thus $\sl_1$ is unchanged by the handle-slide but $\sl_2$ is changed.

Having the ability to perform isotopies with the surgery lines, we begin by isotoping the local picture for $K_{L_1}$ so that $P_2$ is very close to $P_1$ (this alone gives the second diagram in Figure \ref{fig:handleslide}).  Next we isotope $\sl_2$ further so that, starting from $P_2$, it runs parallel along $\sl_1$ until it reaches a point near $Q_1$ before turning away and going towards $Q_2$.  The ability to have $\sl_2$ pass through any other strands of $L$ and/or other surgery lines ensures that we can accomplish this in such a way that the over/under crossing data of $\sl_2$ matches that of $\sl_1$ while running alongside it.  Call this new diagram $K_L^{1.5}$ (See Figure \ref{fig:KL1.5} for clarification).  Clearly we have $Kh^*(K_L^1)\cong Kh^*(K_L^{1.5})$.  We wish to show that $Kh^*(K_L^{1.5})\cong Kh^*(K_L^2)$.

\begin{figure}
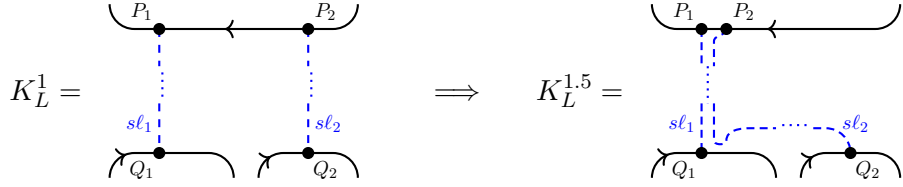

\[K_L^1 = \hslideEqvA \hspace{.25in} \Longrightarrow \hspace{.25in} K_L^{1.5}= \hslideEqvB \]
\caption{The local diagram for $K_L^1$ can be isotoped to the new local diagram $K_L^{1.5}$ where $\sl_2$ runs parallel along $\sl_1$ before reaching $Q_2$.  The dotted lines indicate places where there could be more crossings with other strands and surgery lines, but this crossing data is the same for $\sl_1$ and $\sl_2$ while they are running parallel.}
\label{fig:KL1.5}
\end{figure}

In order to do this, we choose insertion points near the point $Q_1$ and insert a genuine twist $\F_2^{k_1}$ along $\sl_1$, but the truncated complex $\KCsimp_{>a}(k_2)$ along $\sl_2$ (here we set $a:=-2k_2$ to avoid clutter in our diagrams).  For all surgery lines away from this local picture, we choose arbitrary insertion points and insert $\F_2^{k_i}$ as usual.  Allowing the usual abuse of notation, we will consider this as a diagram denoted $\KLk[1.5]$.  See Figure \ref{fig:KL1.5k}.

\begin{figure}
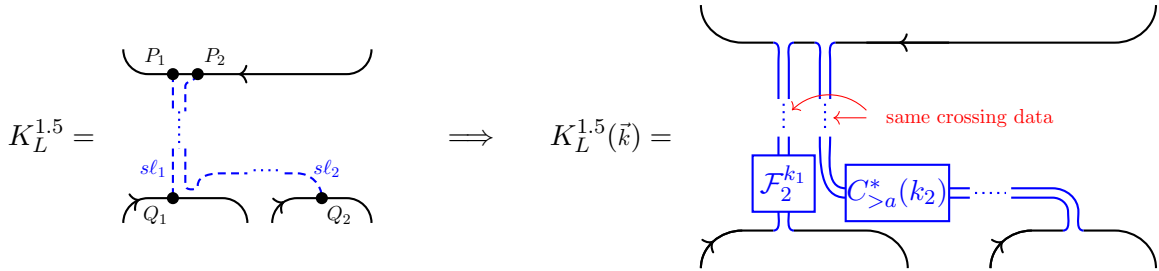

\[K_L^{1.5}= \hslideEqvB \hspace{.25in} \Longrightarrow \hspace{.25in} \KLk[1.5] = \hslideEqvC{$\F_2^{k_1}$}{$\KCsimp_{>a}(k_2)$} \]
\caption{The diagram $K_L^{1.5}$ leads to the diagram $\KLk[1.5]$ built by choosing insertion points near the point $Q_1$.  Again, the dotted lines indicate places where there could be more crossings with other strands and surgery lines, but this crossing data is the same for the strands coming from $\sl_1$ and $\sl_2$ while they are running parallel.}
\label{fig:KL1.5k}
\end{figure}

Now we use the complex for $\KCsimp_{>a}(k_2)$ described in Section \ref{sec:Examples}, notated in such a way as to fit into the diagram $\KLk[1.5]$ in Figure \ref{fig:KL1.5k}:
\begin{equation}\label{eq:two strand trunc cx}
\twostrandA{$\KCsimp_{>a}(k_2)$} \simeq
\left(
\twostrandcups \xrightarrow{\twostranddotL + \twostranddotR} \twostrandcups
\xrightarrow{\twostranddotL - \twostranddotR} \twostrandcups
\rightarrow \dots \rightarrow \twostrandcups
\right).
\end{equation}
In Equation \ref{eq:two strand trunc cx}, the pair of maps $(\twostranddotL+\twostranddotR)$ and $(\twostranddotL-\twostranddotR)$ repeat in sequence for a total of $k_2$ such pairs.  The $q$-degree shifts in this complex are omitted for simplicity.

If we imagine inserting Equation \ref{eq:two strand trunc cx} into the diagram for $\KLk[1.5]$ in Figure \ref{fig:KL1.5k}, we can see a complex of dotted cobordisms which satisfies the conditions of Lemma \ref{lem:Moving Dots in Cone} where we choose to move the dot only on the `left-dotted cobordisms' $\twostranddotL$, viewing the `right-dotted cobordisms' as the extra maps $\tilde{\phi}$ in that lemma's notation (note that these maps do indeed alternate in sign as required).  In this way, we can move the left dot first downwards then along $\sl_2$ parallel to $\sl_1$, then back down along $\sl_1$, through the full twists $\F_2^{k_1}$, and finally settling at a point just to the right of $Q_1$.  See Figure \ref{fig:hslide dotslide}.  As can be seen there, the dot will slide through various crossing data along $\sl_2$ as indicated by the dotted line there, but will then slide through the exact same crossing data along $\sl_1$.  Finally, it slides through $\F_2^{k_1}$ which of course contains $2k_1$ crossings.  In this way, we can guarantee that the dot passes an even number of crossings, so that the sign changes of applying Lemma \ref{lem:Moving Dots in Cone} for each passed crossing all cancel.

We can now isotope the resulting diagram so that it clearly matches the diagram we would draw for $\KLk[2]$ using an insertion point near $P_2$ (see Figure \ref{fig:handleslide} for reference).  Although the chain homotopy equivalence of Lemma \ref{lem:Moving Dots in Cone} uses the full complex $\KC^*(\F_2^{k_1})$ on the left of our diagrams (as opposed to the truncated $\KCsimp_{>-2k_1}(k_1)$ used to build $KC^*(K_L^{1.5})$), we still get isomorphisms on homology groups beyond the truncation point, allowing us to conclude in the limit that $Kh^*(K_L^{1.5})\cong Kh^*(K_L^2)$ and so we are done.

\begin{figure}
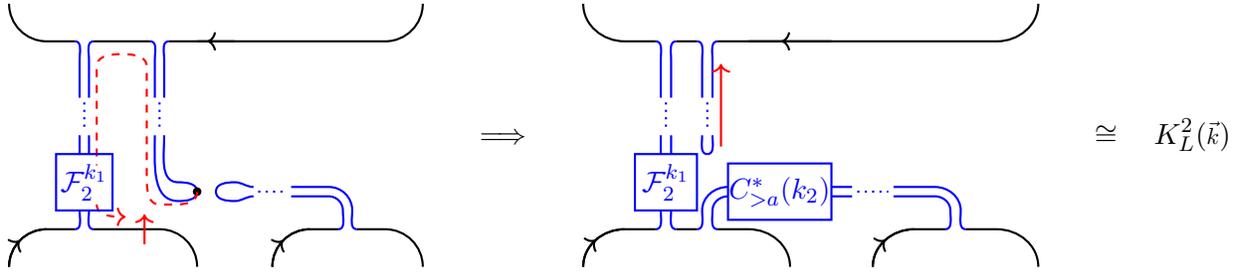

\[\hslideEqvD{$\F_2^{k_1}$} \hspace{.15in}\Longrightarrow\hspace{.15in} \hslideEqvE{$\F_2^{k_1}$}{$\KCsimp_{>a}(k_2)$} \hspace{.15in}\cong\hspace{.15in} \KLk[2]
\]
\caption{The left dot of the dotted cobordisms in Equation \ref{eq:two strand trunc cx} can be slid along $\KLk[1.5]$ as shown by the dashed red arrow.  Along the way it will pass over and under an even number of crossings, so that the overall complex remains unchanged.  The solid red arrows indicate simple planar isotopies that make the resulting complex equivalent to $\KLk[2]$; see Figure \ref{fig:handleslide}.}
\label{fig:hslide dotslide}
\end{figure}

\begin{remark}
We note here that, in this simple knotification scenario where all of our $n_i=2$, we can also establish the invariance of our complex under the surgery wrap move utilizing Lemma \ref{lem:Moving Dots in Cone} and a strategy similar to the proof of Theorem \ref{thm:Kh preserved under handle slides}.  In addition, the stabilization of the truncated complexes $\KCsimp_{>-2k_i}(k_i)$ is already well-known via the formula of Equation \ref{eq:two strand trunc cx}.  In particular, there is no need in this case to appeal to functoriality for cobordism maps commuting with very strong deformation retracts in large multicones in order to arrive at the desired result.  It is our hope that this viewpoint may be helpful in lifting this construction to provide a stable homotopy type $\mathcal{X}(\K_L)$ for such knotifications $\K_L\subset M^r$ as in the work of Lipshitz-Sarkar and Lawson-Lipshitz-Sarkar \cite{LS, LLS}, using techniques similar to those used in the author's construction of a colored homotopy type $\mathcal{X}_c(L)$ in \cite{MW2}.  We look to pursue this further in a future paper.
\end{remark}

\end{proof}

\bibliographystyle{alpha}

\bibliography{References}

\end{document}